\theoremstyle{plain}
\newtheorem{thmns}{Theorem}
\newtheorem{lemns}[thmns]{Lemma}
\newtheorem{corns}[thmns]{Corollary}
\newtheorem{propns}[thmns]{Proposition}
\DeclareMathOperator{\Real}{Re}
\DeclareMathOperator{\interior}{Int}
\DeclareMathOperator{\exterior}{Ext}
\DeclareMathOperator{\Graph}{Graph}
\newcommand{\abs}[1]{\lvert#1\rvert}				
\newcommand{\inv}{^{-1}}							
\DeclareMathOperator{\der}{d}
\newcommand{\ds}{\, \der\! s}						
\newcommand{\dt}{\, \der\! t}						
\newcommand{\dm}{\der\! m}						
\newcommand{\dmu}{\der\! \mu}					
\newcommand{\domega}{\der\! \omega}				
\newcommand{\pardrv}[2]{\frac{\partial #1}{\partial #2}}	
\newcommand{\ra}{\ensuremath{\rightarrow}}
\newcommand{\of}{\circ}							
\newcommand{\intT}{\ensuremath{\int_\T}}
\newcommand{\al}{\ensuremath{\alpha}}				
\newcommand{\lb}{\ensuremath{\lambda}}				
\newcommand{\ph}{\ensuremath{\varphi}}				
\newcommand{\Ph}{\ensuremath{\Phi}}				
\newcommand{\ps}{\ensuremath{\psi}}				
\newcommand{\tht}{\ensuremath{\theta}}				
\newcommand{\what}{\ensuremath{\widehat{w}}}
\newcommand{\bB}{\ensuremath{\mathbf{B}}}	
\newcommand{\B}{\ensuremath{\mathbb B}}			
\newcommand{\N}{\ensuremath{\mathbb N}}			
\newcommand{\R}{\ensuremath{\mathbb R}}			
\newcommand{\C}{\ensuremath{\mathbb C}}			
\newcommand{\F}{\ensuremath{\mathbb F}}
\newcommand{\K}{\ensuremath{\mathbb K}}
\newcommand{\Z}{\ensuremath{\mathbb Z}}			
\newcommand{\cZ}{\ensuremath{\mathcal Z}}
\newcommand{\om}{\ensuremath{\omega}}			
\newcommand{\Om}{\ensuremath{\Omega}}			
\newcommand{\gm}{\ensuremath{\gamma}}			
\newcommand{\Gm}{\ensuremath{\Gamma}}			
\newcommand{\D}{\ensuremath{\mathbb D}}			
\newcommand{\T}{\ensuremath{\mathbb T}}			
\newcommand{\cC}{\ensuremath{\mathcal C}}
\newcommand{\cD}{\ensuremath{\mathcal D}}
\newcommand{\cE}{\ensuremath{\mathcal E}}
\newcommand{\cF}{\ensuremath{\mathcal F}}
\newcommand{\cH}{\ensuremath{\mathcal H}}
\newcommand{\cL}{\ensuremath{\mathcal L}}
\newcommand{\cM}{\ensuremath{\mathcal M}}
\newcommand{\cN}{\ensuremath{\mathcal N}}
\newcommand{\cP}{\ensuremath{\mathcal P}}
\newcommand{\E}{\ensuremath{\mathbb E}}
\newcommand{\fG}{\ensuremath{\mathfrak G}}
\newcommand{\fN}{\ensuremath{\mathfrak N}}
\newcommand{\fp}{\ensuremath{\mathfrak p}}
\newcommand{\sig}{\ensuremath{\sigma}}				
\newcommand{\zt}{\ensuremath{\zeta}}				
\newcommand{\Dbar}{\ensuremath{\overline{\D}}}
\newcommand{\Ombar}{\ensuremath{\overline{\Om}}}
\newcommand{\norm}[1]{\lVert#1\rVert}				
\newcommand{\twonorm}[1]{\lVert #1\rVert_2}			
\newcommand{\pnorm}[1]{\lVert#1\rVert_p}			
\newcommand{\onenorm}[1]{\lVert #1\rVert_1}			
\newcommand{\supnorm}[1]{\lVert#1\rVert_{\infty}}		
\newcommand{\ip}[2]{\langle#1, #2\rangle}			
\newcommand{\adj}{^*}							
\newcommand{\set}[1]{\left\{ #1\right\} }				
\newcommand{\seq}[1]{( #1)}						
\DeclareMathOperator{\Per}{Per}
\newcommand{\romannumbering}{\renewcommand{\theenumi}{\roman{enumi}}
\renewcommand{\labelenumi}{(\theenumi)}}			
\DeclareMathOperator{\clspan}{\overline{span}}
\newcommand{\emp}{\ensuremath{\varnothing}}		
\newcommand{\Ltwo}{\ensuremath{L^2}}				
\newcommand{\Lp}{\ensuremath{L^p}}				
\newcommand{\Linf}{\ensuremath{L^{\infty}}}			
\newcommand{\Lone}{\ensuremath{L^1}}				
\newcommand{\Htwo}{\ensuremath{H^2}}				
\newcommand{\Hone}{\ensuremath{H^1}}				
\newcommand{\Hp}{\ensuremath{H^p}}				
\newcommand{\Hinf}{\ensuremath{H^{\infty}}}			
\DeclareMathOperator{\Nev}{Nev}
\DeclareMathOperator{\LCM}{LCM}
\DeclareMathOperator{\GCD}{GCD}
\newcommand{\Lal}{L^{\al}}
\newcommand{\cLal}{\cL^{\al}}
\newcommand{\Hal}{H^{\al}}
\title{A Beurling Theorem for Generalized Hardy Spaces on a Multiply Connected Domain}
\author{Yanni Chen\qquad Don Hadwin\qquad Zhe Liu\qquad Eric Nordgren}
\date{June 30, 2016} 
\subjclass[2010]{Primary:   47L10; Secondary:  30H10}
\begin{document}
\maketitle

\begin{abstract}

The object of this paper is to prove a version of the Beurling-Helson-Lowdenslager invariant subspace theorem for operators on certain Banach spaces of functions on a multiply connected domain in $\C$. The norms for these spaces are either the usual Lebesgue and Hardy space norms or certain continuous gauge norms. 
In the Hardy space case the expected corollaries include the characterization of the cyclic vectors as the outer functions in this context, a demonstration that the set of analytic multiplication operators is maximal abelian and reflexive, and a determination of the closed operators that commute with all analytic multiplication operators.

\end{abstract}

\section{Introduction}

The setting for this investigation is a finitely connected domain $\Om$ in $\C$ with analytic boundary curves $\Gm$. The Lebesgue spaces are defined relative to harmonic measure $\om$ corresponding to an arbitrarily chosen point $\what$ in $\Om$. Versions of the Beurling \cite{B}, Helson-Lowdenslager \cite{HL} theorem in this context have appeared earlier in the work of Sarason \cite{Sarason}, Hasumi \cite{H}, Voichick  \cite{Voichick66}, and Rudol \cite{Rudol}, although by using Royden's  definition of inner function (see \cite{R}), we can write the theorem in the simpler more traditional form (see Theorems \ref{thm:BHL} and \ref{thm:BHLal}). Our version is modeled on the one obtained by Royden \cite{R} for Hardy spaces on a multiply connected domain. In addition to the added simplicity of the representation, it also allows us to address the matter of uniqueness.

In addition to the Lebesgue space $p$-norms, we also consider more general continuous gauge norms $\al$ on $\Linf(\Gm,\om)$ with an $\Lone(\Gm,\om)$ dominating property. This leads us to general Lebesgue spaces $\Lal(\Gm,\om)$ and Hardy spaces $\Hal(\Gm)$ where we obtain a general Beurling-Helson-Lowdenslager type invariant subspace theorem (see Theorem \ref{thm:BHLal}).

We begin in Section 2 by collecting some of the needed background. The domain $\Om$ has an analytic covering map $\tau$ from the unit disk $\D$ onto $\Om$ which induces a measure preserving transformation from the unit circle $\T$ onto $\Gm$, and consequently an isometric composition operator $C_{\tau}$ from the Lebesgue space $\Lp(\Gm,\om)$ for $1 \leq p \leq \infty$ into its counterpart $\Lp(\T,m)$ on the circle, where $m$ is normalized Lebesgue measure on $\T$. The Hardy spaces on $\Om$ were introduced by Parreau \cite{P} and Rudin \cite{Ru1} as consisting of analytic functions $f$ with $\abs{f}^p$ dominated by some harmonic function. As on the disk, these functions have boundary limits, and hence the spaces $\Hp(\Om)$ may be identified with isometrically isomorphic subspaces $\Hp(\Gm)$ of $\Lp(\Gm,\om)$. The Hardy space theory of the unit circle has been extended by the first author \cite{Chen1, Chen2} by considering norms that are more general than the Lebesgue norms and these are introduced in section \ref{subsec:gauge}.

Although the Hardy space theory of a multiply connected domain is in large part similar to that of the unit disk (see Royden \cite{R} sections 1 and 2), the multiple connectivity introduces some interesting differences. In particular, not all harmonic functions on $\Om$ have single-valued harmonic conjugates because the holes in $\Om$ may give rise to periods and multiple valued harmonic conjugates. A consequence, for example, is that whereas on the unit circle $\Ltwo(\T,m)$ has an orthogonal direct sum decomposition into a subspace of analytic functions, a subspace of constant functions, and a subspace of co-analytic functions, on $\Gm$ the space $\Ltwo(\Gm,\om)$ has a similar decomposition, but with an additional $n$-dimensional subspace resulting from the possibility of periods from each of the $n$ holes. Also, inner functions in this context turn out to be multiple valued if the restriction that their boundary values need to be unimodular is enforced, but if we follow Royden \cite{R} and relax the boundary condition to that of constant modulus on each of the connected components of $\Gm$, then the multiple value problem can be made to go away.

The $\Ltwo$ case of the Beurling-Helson-Lowdenslager theorem is dealt with in Section 3, where we make use of the Forelli~\cite{Forelli} projection operator $\cP$ which maps $\Lone(\T,m)$ onto the range of $C_{\tau}$ acting on $\Lone(\Gm,\om)$. It is shown that if $\cM$ is a subspace of $\Ltwo(\Gm,\om)$ that is invariant under multiplication by every function in $\Hinf(\Gm)$, then the image of $\cM$ under $C_{\tau}$ can also be obtained by applying $\cP$ to the invariant subspace of the unilateral shift generated by $C_{\tau}(\cM)$. This fact is used to show that these subspaces $\cM$ are either of the form $\chi_{\E}\Ltwo(\Gm,\om)$ or $\ph\Htwo(\Gm)$, where $\E$ is a measurable subset of $\Gm$ and $\ph$ is a function on $\Gm$ having constant modulus on each component of $\Gm$.  (In a private communication Alexandru Aleman \cite{A} has indicated that he has also obtained this result by different means on a region where the boundary curves are circles.) As a corollary we obtain a special case of Royden's result \cite[Theorem 1]{R} that when $\cM$ is included in $\Htwo(\Gm)$, then $\ph$ is inner. It is also shown that in this context as well as that of the circle the cyclic vectors of the set of multiplications by $\Hinf(\Gm)$ functions on $\Htwo(\Gm)$ are the outer functions.

The $\Lal(\Gm,\om)$ case is dealt with in Section 4 by using a slight modification of the proof of the first author in \cite{Chen2} (see also \cite{G1}). Every continuous, dominating, gauge norm on $\Linf(\Gm,\om)$ induces the same topology as the weak* topology on the ball of $\Linf(\Gm,\om)$, and this coincides with the topology of convergence in measure on the ball. As in the case of the unit disk, the space $\Hal(\Gm)$ consists of the members of $\Hone(\Gm)$ that are also in $\Lal(\Gm,\om)$, and thus members of $\Linf(\Gm,\om)$ having reciprocals in $\Lal(\Gm,\om)$ differ from outer functions by functions of locally constant modulus. This makes it possible to use the $\Ltwo(\Gm,\om)$ result to show that the invariant subspaces of the $\Hinf(\Gm)$ multiplication operators on $\Lal(\Gm,\om)$ have the same form as those in $\Htwo(\Gm)$. Consequently, invariant subspaces in $\Hal(\Gm)$ also  are determined by inner functions $\ph$ and have the form $\ph\cdot\Hal(\Gm)$, a result that contains the full version of Royden's invariant subspace theorem mentioned in the preceding paragraph. 

Section 5 establishes some properties of inner functions on the basis of invariant subspaces. On the disk the only simultaneously inner and outer functions are constants, but on a multiply connected domain being both inner and outer is equivalent to being inner and invertible, and the group of such functions is quite substantial. Also in this section the cyclic vectors in $\Hal(\Gm)$ are characterized as outer functions.

Section 6 shows that the spaces $\Hal(\Gm)$ fit into the multiplier pair context of \cite{HN} and consequently the algebra of multiplication operators by $\Hinf(\Gm)$ functions is maximal abelian and reflexive. The paper concludes in Section 7 with a characterization of the closed operators on $\Hal(\Gm)$ that commute with the analytic multiplication operators.

\section{Preliminaries} \label{sec:prelim}

In this section we review some known facts for later use.

\subsection{Hardy spaces on $\Om$}
Let $\Om$ be a bounded multiply connected domain in $\C$ with analytic boundary curves $\Gm_{0}, \ldots, \Gm_{n}$. Assume that $\Om \subset \interior \Gm_{0}$, and let $\Om_{j}$ be $\interior \Gm_{0}$ when $j = 0$ and $\exterior \Gm_{j}$ in the extended complex plane when $j > 0$. Also assume that for $1 \leq k \leq n$ the sets $\Gm_{k}$ together with their interiors are pairwise disjoint subsets of $\Om_{0}$. Thus $\Om = \bigcap_{j=0}^{n} \Om_{j}$. Fix a point $\what$ in $\Om$ and let $\Phi_{j}$ be the Riemann mapping function that sends the open unit disk $\D$ onto $\Om_{j}$ with $\Ph_{j}(0) = \what$ for $0 \leq j \leq n$. Because the boundary of $\Om$ consists of analytic curves, the functions $\Ph_{j}$ are analytic on $\Dbar$. We will treat the $\Gm_{j}$ as parameterized curves when convenient with parametrizations given by $\Gm_{j}(t) = \Ph_{j}(e^{it})$ for $0 \leq t \leq 2\pi$. It follows that with $\Gm = \Gm_{0} + \cdots + \Gm_{n}$ the points of $\Om$ have index one relative to the cycle $\Gm$ and points of the complement of $\Ombar$ have index 0.

The parameterizations $\Gm_{j}$ give rise to arc length measure on $\Gm$ defined by $\ds = \abs{\Gm_{j}'(t)}\dt$ at points $\Gm_{j}(t)$ of $\Gm$. (More precisely, arc length measure on $\Gm$ is the measure that is obtained  on each component $\Gm_j$ of $\Gm$ separately by lifting the measure on $[0,2\pi]$ that has Radon-Nikodym derivative $\abs{\Gm_j'(t)}$ relative to Lebesgue measure using the map $t \mapsto \Ph_j(e^{it})$.)  But there exists a measure $\om$ on $\Gm$ that is better adapted to our needs and can be related to normalized Lebesgue measure $m$ on $\T$ by means of the Koebe mapping function $\tau:\D \ra \Om$. The function $\tau$ is analytic, surjective, locally one to one, and it has the property that every point $w \in \Om$ lies in a disk $D_{w}$ whose inverse image under $\tau$ is made up of connected components that are each mapped bijectively by $\tau$ onto $D_{w}$ (see Conway \cite[Chapter 16]{C}). Also, $\tau$ can be chosen so that $\tau(0) = \what$, and the additional requirement $\tau'(0) > 0$ makes $\tau$ unique. Since $\Gm$ is made up of analytic curves, there exists an open subset $\T_{0}$ of the unit circle $\T$ over which $\tau$ has an analytic continuation, such that $\tau(\T_{0}) = \Gm$, and such that the complement of $\T_{0}$ in $\T$ has Lebesgue measure 0 (see Tsuji \cite[Theorem XI. 17]{T}).

Every continuous function $f$ on $\Gm$ has a continuous extension to $\Ombar$ that is harmonic on $\Om$, and which we also label $f$. By the maximum principle, evaluation at a point $w$ of $\Om$ is a continuous linear functional on the space $C(\Gm)$ of continuous complex functions on $\Gm$. The Riesz representation theorem implies the existence of a probability measure $\om_{w}$ on the Borel subsets of $\Gm$ such that $f(w) = \int_{\Gm} f \domega_{w}$. It can be shown that 
\begin{equation} \label{eqn:omandGreenfn}
\frac{\domega_{w}}{\ds} = \frac{1}{2\pi} \pardrv{g_{w}}{n},
\end{equation}
where $g_{w}$ is the Green's function for $\Om$ with pole at $w$ and the derivative is in the direction of the inward pointing normal. More importantly for our purposes, the measure $\om_{w}$ can also be related to normalized Lebesgue measure $m$ on $\T$ as follows. If $f$ is a continuous function on $\Ombar$ that is harmonic on $\Om$, then $f(w) = \int_{\Gm} f \domega_{w}$, and $f\of\tau$ is a bounded harmonic function on $\D$ with boundary values also given $m$-a.e.\ by $f\of\tau$. For $z \in \D$, let $m_{z}$ be defined by $\dm_{z} = P_{z} \dm$, where $P_{z}$ is the Poisson kernel for evaluation at $z$. Thus if $\tau(z) = w$, then $\intT f\of\tau \dm_{z} = f(\tau(z)) = f(w)$, and consequently the transformation of integral formula implies $\om_{w} = m_{z}\tau\inv$. We state this as a lemma for easy reference.

\begin{lemns} \label{lem:hameas}
If $z \in \D$ and $w = \tau(z)$, then $\om_{w} = m_{z}\tau\inv$.
\end{lemns}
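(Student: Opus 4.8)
The plan is to establish the identity by testing both measures against continuous functions: I will show that $\int_\Gm f \domega_w = \int_\Gm f \, \der\!(m_{z}\tau\inv)$ for every $f \in C(\Gm)$, and then appeal to the uniqueness of the representing measure in the Riesz representation theorem to conclude that $\om_w = m_{z}\tau\inv$. Since both are probability Borel measures on $\Gm$, once the integrals agree for all continuous $f$ the two measures must coincide.

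First I would fix $f \in C(\Gm)$ and replace it by its harmonic extension to $\Ombar$, still written $f$. By the defining property of harmonic measure recorded just above, $f(w) = \int_\Gm f \domega_w$. Next I would observe that $f \of \tau$ is a bounded harmonic function on $\D$ whose nontangential boundary values agree $m$-a.e.\ with $f \of \tau$, using the analytic continuation of $\tau$ across $\T_0$ together with the fact that $\T \setminus \T_0$ is $m$-null and that $f$ is continuous on $\Gm$. The Poisson representation for evaluation at $z$ then gives
\begin{equation*}
f(\tau(z)) = \intT (f\of\tau) P_z \dm = \intT (f\of\tau)\dm_z .
\end{equation*}
Because $w = \tau(z)$, these computations combine to give $\int_\Gm f \domega_w = \intT (f\of\tau)\dm_z$.

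It then remains only to transport the right-hand integral back to $\Gm$: the transformation-of-integral (change of variables) formula for the pushforward measure $m_{z}\tau\inv$ yields $\intT (f\of\tau)\dm_z = \int_\Gm f \, \der\!(m_{z}\tau\inv)$, so that $\int_\Gm f \domega_w = \int_\Gm f \, \der\!(m_{z}\tau\inv)$ for all $f \in C(\Gm)$, and the conclusion follows from Riesz uniqueness. The only point requiring genuine care is the boundary bookkeeping, namely that $m_z$ assigns no mass to $\T \setminus \T_0$, so that $m_{z}\tau\inv$ is legitimately a measure on $\Gm$ and the change-of-variables identity is valid; this is immediate since $\dm_z = P_z \dm$ with $P_z$ bounded for the fixed $z \in \D$, whence $m_z \ll m$ and $m_z(\T \setminus \T_0) = 0$. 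Everything else is a routine invocation of the Poisson integral formula and the Riesz theorem, so I do not anticipate a substantive obstacle here.
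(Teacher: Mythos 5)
Your argument is correct and is essentially the same as the paper's: the authors also harmonically extend $f \in C(\Gm)$, note that $f\of\tau$ is a bounded harmonic function on $\D$ with boundary values $f\of\tau$, apply the Poisson representation at $z$, and invoke the transformation-of-integral formula to identify $\om_w$ with the pushforward $m_z\tau\inv$. Your extra remark that $m_z(\T\setminus\T_0)=0$ is a harmless refinement of the same proof.
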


Knowledge of the measures $\om_{w}$ makes the solution of the Dirichlet problem explicit; if $f \in C(\Gm)$, then for $w \in \Om$, $f(w) = \int_{\Gm} f \domega_{w}$ gives the harmonic function on $\Om$ having the original $f$ as its boundary function. The relation between $m_{z}$ and $\om_{w}$ makes it easy to derive properties of $\om_{w}$ from corresponding ones for $m_{z}$. The following is an example. 

\begin{corns}
Each of the measures $\om_{w}$ is boundedly mutually absolutely continuous with respect to arc length measure.
\end{corns}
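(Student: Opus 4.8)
The plan is to reduce everything to the single point $\what$ using Lemma \ref{lem:hameas}, and then to read off the two-sided bound from the Green's-function formula \eqref{eqn:omandGreenfn}. First I would record the disk version of the statement: for fixed $z\in\D$ the Poisson kernel satisfies $\frac{1-\abs z}{1+\abs z}\le P_z\le\frac{1+\abs z}{1-\abs z}$ on \T, so $\dm_z=P_z\,\dm$ shows that $m_z$ and $m$ are boundedly mutually absolutely continuous, say $a\,m\le m_z\le b\,m$ with $0<a\le b<\infty$. Applying this to a Borel set of the form $\tau\inv(E)$ with $E\subseteq\Gm$ and using Lemma \ref{lem:hameas} (with $z=0$, so that $m_0=m$ and $\om_{\what}=m\tau\inv$) gives $a\,\om_{\what}(E)\le\om_w(E)\le b\,\om_{\what}(E)$. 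Thus every $\om_w$ is boundedly mutually absolutely continuous with respect to $\om_{\what}$, and it suffices to prove the corollary for the base point $\what$.

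For $\om_{\what}$ I would use \eqref{eqn:omandGreenfn}, which expresses its arc-length density as $\frac{\domega_{\what}}{\ds}=\frac1{2\pi}\pardrv{g_{\what}}{n}$. Since the components of \Gm\ are analytic curves, the harmonic function $g_{\what}$, which vanishes on \Gm, extends by the Schwarz reflection principle to a function that is harmonic (indeed real-analytic) across each component; in particular $\pardrv{g_{\what}}{n}$ is continuous on the compact set \Gm. Because $g_{\what}>0$ in \Om\ and $g_{\what}=0$ on \Gm, the inward normal derivative is nonnegative, and Hopf's boundary point lemma makes it strictly positive at every point of \Gm. A continuous, strictly positive function on the compact set \Gm\ is bounded above and away from $0$, so there are constants $0<c\le C<\infty$ with $c\,\ds\le\domega_{\what}\le C\,\ds$. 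Combining this with the previous paragraph yields the asserted bounds for every $\om_w$.

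The substantive point is the treatment of the base case, and in particular the strictly positive lower bound for $\pardrv{g_{\what}}{n}$. One might be tempted to argue from $\om_{\what}=m\tau\inv$ purely by transporting arc length through $\tau$, but because $\tau$ is an infinite-to-one covering map the resulting density is an infinite sum $\frac1{2\pi}\sum_{q\in\tau\inv(p)}\abs{\tau'(q)}\inv$ over the sheets, whose uniform boundedness is not transparent; formula \eqref{eqn:omandGreenfn} is exactly what packages this sum into a single smooth quantity. The only care needed in the Green's-function argument is that the reflection be carried out across each of the finitely many analytic components separately and that Hopf's lemma be applied at each boundary point, after which compactness of \Gm\ supplies the uniform constants $c$ and $C$.
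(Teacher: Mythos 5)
Your argument is correct, and it follows the route the paper plainly intends (the corollary is stated there without proof, as an ``easy'' consequence of the preceding material): the Poisson-kernel bounds transferred through $\om_{w}=m_{z}\tau\inv$ reduce everything to $\om_{\what}$, and formula \eqref{eqn:omandGreenfn} together with reflection across the analytic boundary and the Hopf lemma gives the two-sided comparison of $\om_{\what}$ with arc length. Your closing remark about why one should not try to push arc length directly through the infinite-to-one covering map $\tau$ is a sensible caution, and nothing in the argument needs repair.
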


The case of $z = 0$, and hence $m_{z} = m$, is particularly important. In this case we will write $\what$ for $\tau(0)$ and simply $\om$ for $\om_{\what}$. Thus it follows from the preceding that the operator $C_{\tau}$ of composition with $\tau$ maps each of the spaces $\Lp(\Gm,\om)$ ($1 \leq p \leq \infty$) isometrically into $\Lp(\T,m)$, and there is an expectation operator $\cE$ on $\Lp(\T,m)$ having the same range as $C_{\tau}$. When $p = 2$ the operator $\cE$ is the orthogonal projection of $\Ltwo(\T,m)$ onto the range of $C_{\tau}$. If it is only assumed that $f \in \Lone(\Gm,\om)$, then the function on $\Om$ given by $f(w) = \int_{\Gm} f \domega_{w}$ is harmonic and can be shown to have nontangential boundary limits given by $f$ $\om$-a.e. 


Let $\Hinf(\Om)$ be the space of bounded analytic functions on $\Om$. The mapping $C_{\tau}$ can also be thought of as a transformation of functions on $\Om$ into functions on $\D$, and as such transforms $\Hinf(\Om)$ into a subspace of $\Hinf(\D)$. The set $\fG$ of all disk automorphisms $\sig$, i.e.\ linear fractional transformations of $\D$ onto itself, that satisfy $\tau\of\sig = \tau$ is the covering group of $\tau$. It has the following useful property. 

\begin{lemns} \label{lem:Ginv}
A measurable function $F$ on $\T$ has the form $F = f \of \tau$ for a measurable function $f$ on $\Gm$ if and only if $F \of \sig = F$ for all $\sig \in \fG$.
\end{lemns}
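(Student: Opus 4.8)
The forward implication is immediate: if $F = f\of\tau$ and $\sig\in\fG$, then $F\of\sig = f\of\tau\of\sig = f\of\tau = F$, using that $\tau\of\sig = \tau$ holds $m$-a.e.\ on $\T$ (it holds on $\D$ by definition of $\fG$ and persists on $\T_{0}$ by the analytic continuation of $\tau$ across $\T_{0}$).

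For the converse, the plan is to realize $f$ as the function induced by $F$ on the orbit space of $\fG$, identified with $\Gm$ through $\tau$. First I would recall that $\tau$ is the universal covering map of $\Om$, so it is a regular covering whose deck group is exactly $\fG$; consequently, for every $w\in\Om$ the fiber $\tau\inv(w)$ is a single $\fG$-orbit. I would then upgrade this to the boundary. Fix $p\in\Gm$ and two points $\zt_{1},\zt_{2}\in\T_{0}$ with $\tau(\zt_{1}) = \tau(\zt_{2}) = p$. Since $\tau$ has an analytic, locally injective continuation across $\T_{0}$, there are continuous local inverse branches $s_{1},s_{2}$ of $\tau$ defined on a connected neighborhood $U$ of $p$ in $\Ombar$ with $s_{i}(p) = \zt_{i}$. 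Over $U\cap\Om$ both are sections of the regular covering, hence differ by a deck transformation: the assignment sending $x\in U\cap\Om$ to the unique $\sig_{x}\in\fG$ with $s_{2}(x) = \sig_{x}(s_{1}(x))$ is locally constant (because $\fG$ is discrete and the $s_{i}$ are continuous), so it is constant on the connected set $U\cap\Om$. Letting $x\to p$ and using that the fixed $\sig\in\fG$ extends continuously to $\Dbar$ yields $\zt_{2} = \sig(\zt_{1})$. Thus the fibers of $\tau|_{\T_{0}}$ are precisely the $\fG$-orbits in $\T_{0}$.

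With this in hand I can define $f$ on $\Gm$ by setting $f(p) = F(\zt)$ for any $\zt\in\T_{0}$ with $\tau(\zt) = p$; this is unambiguous, since any two such $\zt$ lie in a single $\fG$-orbit and $F$ is $\fG$-invariant, and by construction $F = f\of\tau$ on $\T_{0}$, hence $m$-a.e.\ on $\T$. It remains to verify that $f$ is measurable. Because $\tau|_{\T_{0}}\colon\T_{0}\to\Gm$ is a local homeomorphism, $\Gm$ is covered by countably many open arcs $V_{i}$ on each of which a continuous section $s_{i}\colon V_{i}\to\T_{0}$ of $\tau$ exists; then $f|_{V_{i}} = F\of s_{i}$ is measurable, and patching over the $V_{i}$ shows that $f$ is measurable on $\Gm$.

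I expect the main obstacle to be the boundary step, namely checking that the fiber–orbit correspondence, which is standard covering-space theory in the interior of $\D$, survives on $\T_{0}$. This is precisely where the analyticity of the boundary curves is used: it supplies the analytic continuation of $\tau$ across $\T_{0}$ together with the local univalence needed for the continuous local inverse branches that transport the interior argument to the boundary. Once well-definedness is secured, the measurability of $f$ is a routine consequence of the same local-homeomorphism property.
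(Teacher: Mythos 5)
The paper states Lemma~\ref{lem:Ginv} without proof, presenting it as a known property of the covering group, so there is no in-paper argument to compare against; your proof supplies the standard covering-space argument and is essentially correct. The key step --- that the fibres of $\tau$ restricted to $\T_{0}$ are exactly the $\fG$-orbits, obtained by transporting the interior fibre--orbit correspondence to the boundary through the local inverse branches furnished by the analytic continuation of $\tau$ across $\T_{0}$ --- is exactly the right mechanism, and your identification of the analyticity of the boundary curves as the ingredient that makes the boundary step work is on target. Two technical points deserve a sentence each in a polished write-up. First, since $F$ is only a measurable function, the hypothesis $F\of\sig = F$ should be read as holding $m$-a.e.\ for each $\sig$; your pointwise definition of $f$ on a fibre is then not automatically unambiguous, and one must first discard the $\fG$-saturation of the union of the exceptional null sets --- still a null set, because $\fG$ is countable and its elements are diffeomorphisms of $\T$ preserving null sets --- after which $F$ is genuinely constant on every surviving orbit and your construction goes through, yielding $F = f\of\tau$ $m$-a.e. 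Second, the measurability of $f|_{V_{i}} = F\of s_{i}$ needs more than continuity of $s_{i}$, since a composition of a Lebesgue measurable function with a continuous map need not be measurable; here $s_{i}$ is a diffeomorphism onto its image (its inverse is the locally univalent analytic continuation of $\tau$), so it pulls null sets back to null sets and the composition is indeed measurable. Neither point disturbs the structure of your argument.
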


Since functions in $\Hinf(\D)$ have non tangential limits a.e.\ on $\T$ and $\tau$ is analytic on $\T_{0}$, it follows that all functions in $\Hinf(\Om)$ have non tangential limits $\om$-a.e.\ on $\Gm$. Furthermore, it also follows that the Nevanlinna class $\Nev(\Om)$, consisting of all analytic functions on $\Om$ that are quotients of functions in $\Hinf(\Om)$, has the property that all its members also have non tangential limits $\om$-a.e.\ on $\Gm$. These limits define their boundary functions which constitute the class $\Nev(\Gm)$, and $\Hinf(\Gm)$ is the subspace of boundary functions of members of $\Hinf(\Om)$. Thus $\Nev(\Gm)$ shares with $\Nev(\T)$ the property that the vanishing of one of its members on a set of positive measure entails its vanishing almost everywhere.

The space $\Hp(\Om)$ for $0 < p < \infty$ is defined to consist of all analytic functions $f$ on  $\Om$ such that $\abs{f}^{p} \leq h$ for some harmonic function $h$ (see \cite{Ru1}). In this case there is a smallest such harmonic function $h$, the least harmonic majorant of $\abs{f}^{p}$, and it is used to define $\pnorm{f}$ as $h(\what)^{1/p}$, which is a norm making $\Hp(\Om)$ into a Banach space when $1 \leq p <\infty$.

Suppose $f \in \Hp(\Om)$ and $h$ is a harmonic function satisfying $\abs{f}^{p} \leq h$ on $\Om$. If $f_{1} = f\of\tau$ and $h_{1} = h\of\tau$, then $f_{1}$ is analytic on $\D$, $h_{1}$ is harmonic on $\D$, and $\abs{f_{1}}^{p} \leq h_{1}$. It follows that $f_{1} \in \Hp(\D)$. (Reason: if $(f_{1})_{r}(z) = f_{1}(rz)$, then $\intT \abs{(f_{1})_{r}}^{p} \dm \leq \intT h_{1} \dm = h_{1}(0)$ for all $r \in [0,1)$). Taking the supremum over $r$ shows that $\pnorm{f_{1}} \leq h_{1}(0)^{1/p}$.  If $h$ is the least harmonic majorant of $\abs{f}^{p}$, then, as Rudin~\cite{Ru1} showed, $h_{1}$ is the least harmonic majorant of $\abs{f_{1}}^{p}$. Here is the argument: call the least harmonic majorant of $\abs{f_{1}}^{p}$ for the moment $h_{0}$, so $h_{0} \leq h_{1}$. If $\sig \in \fG$, then $\abs{f_{1}\of\sig}^{p} \leq h_{0}\of\sig$, and hence $\abs{f_{1}}^{p} \leq h_{0}\of\sig$ implying $h_{0} \leq h_{0}\of\sig $. Because $\fG$ is a group, it follows that $h_{0}\of\sig = h_{0}$ for all $\sig \in \fG$, and thus $h_{0} = h_{2}\of\tau$ for some  harmonic function $h_{2}$ that majorises $\abs{f}^{p}$ on $\Om$. Consequently $h \leq h_{2}$, it follows that $h_{1} \leq h_{0}$, and the argument is complete. Since $h_{1}$ is the least harmonic majorant of $f_{1}$, $\pnorm{f_{1}} = \pnorm{f}$, and therefore $C_{\tau}$ maps $\Hp(\Om)$ isometrically into $\Hp(\D)$. Moreover the image of $\Hp(\Om)$ under $C_{\tau}$ consists of all functions $f_{1}$ in $\Hp(\D)$ satisfying $f_{1}\of\sig = f_{1}$ for all $\sig \in \fG$. This observation could have been used to establish that $\Hp(\Om)$ is a Banach space when $1 \leq p < \infty$ on the basis of the known fact that $\Hp(\D)$ is a Banach space.

Just as each of the Hardy spaces $\Hp$ on the disk is isometrically isomorphic to its space of boundary functions, the same is true for the region $\Om$. Let $f $ be in $\Hp(\Om)$ and, as above, put $f_{1} = f\of\tau$. Then $f_{1}$ is in $\Hp(\D)$ and it has a boundary function $\hat{f}_{1}$ defined almost everywhere on $\T$. Since $f_{1}\of\sig = f_{1}$ for all $\sig \in \fG$ it follows that $\hat{f}_{1}\of\sig = \hat{f}_{1}$ on $\T$, and therefore $\hat{f}_{1} = \hat{f}\of\tau$ for some function $\hat{f}$ on $\Gm$ which belongs to $\Lp(\Gm,\om)$ and has the same norm as $\hat{f}_{1}$. If $\zt \in \T_{0}$, $w_{b} = \tau(\zt)$, and $f_{1}$ has nontangential limit $\hat{f}_{1}(\zt)$ at $\zt$, then, because $\tau$ is analytic at $\zt$, it follows that $f$ has nontangential limit $\hat{f}_{1}(\zt) = \hat{f}(\tau(\zt)) = \hat{f}(w_{b})$, and thus $f$ has boundary function $\hat{f}$ at almost every point $w_{b}$ of $\Gm$. The process of taking limits of an $\Hp(\Om)$ function at boundary points is reversed by forming integrals $w \mapsto \int_{\Gm} f \domega_{w}$, which are analogous to the Poisson integrals on the disk. In summary, the space $\Hp(\Om)$ can be viewed equivalently as the collection of analytic functions $f$ on $\Om$ for which $\abs{f}^{p}$ has a harmonic majorant, or as the isometrically isomorphic subspace $\Hp(\Gm)$ of $\Lp(\Gm,\om)$ consisting of the boundary functions of members of $\Hp(\Om)$, or as the subspace of $\Hp(\D)$ consisting of those functions invariant under composition with all members of $\fG$, or as the subspace of $\Hp(\T)$ with the same invariance property relative to $\fG$. We will make use of these different views more or less interchangeably. Moreover it can be shown that the rational functions with poles off $\Ombar$ are dense in $\Hp(\Om)$ (weak* when $p = \infty$.)

Although largely similar to the Hardy space theory of the unit disk, the theory for a multiply connected domain $\Om$ differs significantly in one respect. We will confine ourselves here mainly to discussing the $\Ltwo$ case, which we will need in Section~\ref{sec:LtwoBHL}. In the case of the disk there is the familiar decomposition $\Ltwo(\T,m) = \Htwo(\T) \oplus \Htwo_{0}(\T)\adj$, where $\Htwo_{0}(\T)\adj$ is the set of complex conjugates of the functions in $\Htwo(\T)$ that are orthogonal to 1 (the set of functions vanishing at 0). The counterpart of this decomposition for $\Om$ is $\Ltwo(\Gm,\om) = \Htwo(\Gm) \oplus \Htwo_{0}(\Gm)\adj \oplus N(\Gm)$, where $N(\Gm)$ is an $n$-dimensional subspace of bounded functions, and $\Htwo_{0}(\Gm)\adj$ is the set of complex conjugates of of the functions in $\Htwo(\Gm)$that are orthogonal to 1 (the set of functions vanishing at $\what$). The subspace $N(\Gm)$ is the span of functions $Q_{1}, \ldots , Q_{n}$, and it is orthogonal to the set of real parts of the rational functions with poles outside of $\Ombar$. The exact specification of the $Q_{j}$ is given by $Q_{j} \domega = \pardrv{h_{j}}{n} \ds$, where each $h_{j}$ is the harmonic function on $\Om$ with boundary values 1 on $\Gm_{j}$ and 0 on $\Gm_{k}$ with $k \ne j$. These functions will be encountered again in Section~\ref{subsec:eigen}, but for a full discussion see Fisher's book \cite{Fisher}, sections 4.2 and 4.5. The subspace $N(\Gm)$ that they span is of importance because $C_{\tau}$ maps not only $\Htwo(\Gm)$ into $\Htwo(\T)$, but in fact $C_{\tau}\bigl(\Htwo(\Gm) \oplus N(\Gm) \bigr) = \cE\bigl(\Htwo(\T)\bigr)$ \cite{Forelli}.

\subsection{Gauge norms}\label{subsec:gauge}

In \cite{Chen1} the first author introduced the study of Hardy spaces on $\T$ under a family of norms that properly includes the $p$\,-norms. Since our interest is in the space $\Gm$ with the measure $\om$, we will introduce norms of this type in a more general setting. Let $\mu$ be a nonatomic probability measure on a $\sig$-algebra in a set $X$, and let $\al$ be a norm on $\Linf(X,\mu)$. We call $\al$ a \emph{gauge norm} in case $\al(1) = 1$ and $\al(\abs{f}) = \al(f)$ for all $f \in \Linf(X,\mu)$, and we say it is \emph{continuous} in case 
\begin{equation*}
\lim_{\mu(\E)\ra 0} \al(\chi_{\E}) = 0.
\end{equation*}
Also, we will call $\al$ \emph{dominating} in case $\onenorm{f} \leq \al(f)$ whenever $f \in \Linf(X,\mu)$. The property should more properly be called \emph{one-norm} dominating, but we will use the shorter locution. It was shown in \cite{Chen1}, Proposition 2.2, that if a continuous gauge norm on $\Linf(\T,m)$ is rotationally symmetric in the sense that $\al(f_{\tht}) = \al(f)$ for all $\tht$ where $f_{\tht}(z) = f(e^{i\tht}z)$ for all $f \in \Linf(\T,m)$, then $\al$ is dominating.

A gauge norm $\al$ may be extended to all measurable complex functions $f$ on $X$ by 
\begin{equation*}
\al(f) = \sup\set{\al(s): s \text{ is a simple function and } 0 \leq s \leq \abs{f}}.
\end{equation*}
Let $\cLal(X,\mu)$ consist of all measurable functions $f$ such that $\al(f) < \infty$. If $\al$ is a continuous dominating gauge norm on $\Linf(X,\mu)$, then its extension to $\cLal(X,\mu)$ has the same properties. The space $\cLal(X,\mu)$ is a Banach space, and we define $\Lal(X,\mu)$ to be the closure of $\Linf(X,\mu)$ in $\cLal(X,\mu)$.

Let $\al$ be a dominating, gauge norm, and define its \emph{dual norm} $\al'$ on $\Linf(X,\mu)$ by 
\begin{equation*}
\al'(f) = \sup\set{\Bigl| \int_{X} fh \dmu \Bigr|: h \in \Linf(X,\mu) \text{ and }\al(h) \leq 1}.
\end{equation*}

The following are Lemma 2.6 and Proposition 2.7 of \cite{Chen2}.

\begin{lemns}
The dual norm $\al'$ of a dominating gauge norm $\al$ is also a dominating gauge norm.
\end{lemns}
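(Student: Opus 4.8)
The plan is to verify directly the four defining features of a dominating gauge norm for $\al'$ on $\Linf(X,\mu)$: that $\al'$ is a finite norm, that $\al'(\abs f) = \al'(f)$, that $\al'(1) = 1$, and that $\onenorm f \le \al'(f)$. The two hypotheses on $\al$ enter in complementary ways. The dominating property $\onenorm h \le \al(h)$ controls $\al'$ from above: since $\set{h : \al(h) \le 1}$ is contained in the unit ball of $\Lone(X,\mu)$, every admissible $h$ satisfies $\abs{\int_X f h \dmu} \le \supnorm f \,\onenorm h \le \supnorm f$, so $\al'(f) \le \supnorm f < \infty$ and $\al'$ is finite on $\Linf(X,\mu)$. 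The gauge property, meanwhile, lets me adjust test functions by unimodular factors without changing their $\al$-value, which is the engine behind the remaining steps.

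Homogeneity and the triangle inequality for $\al'$ are immediate from the linearity of $h \mapsto \int_X f h \dmu$ and elementary properties of the supremum, so I would dispatch them quickly. For the gauge identity I would use the unimodular function $u = \overline{f}/\abs f$ on $\set{f \neq 0}$ (and $u = 1$ elsewhere). Given any $h$ with $\al(h) \le 1$, the product $uh$ has $\abs{uh} = \abs h$, hence $\al(uh) = \al(\abs{uh}) = \al(h) \le 1$, while $\int_X \abs f \, h \dmu = \int_X f (uh) \dmu$; running the same substitution with $\overline u$ in place of $u$ gives the reverse inequality, so $\al'(\abs f) = \al'(f)$.

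Domination of $\al'$ and the normalization $\al'(1) = 1$ are where the hypotheses do the real work, and they essentially bootstrap off one another. Testing against $h = u$ (unimodular, so $\al(u) = \al(1) = 1$) gives $\int_X f u \dmu = \onenorm f$, whence $\al'(f) \ge \onenorm f$; this is the dominating property for $\al'$, and it uses only $\al(1) = 1$. In particular $\al'(f) \ge \onenorm f > 0$ whenever $f \neq 0$, which supplies the positivity needed to conclude that $\al'$ is genuinely a norm. For $\al'(1)$, the lower bound $\al'(1) \ge \onenorm 1 = 1$ is the case $f = 1$ of what was just proved, while the matching upper bound is exactly where the domination of $\al$ is indispensable: for $\al(h) \le 1$ one has $\abs{\int_X h \dmu} \le \onenorm h \le \al(h) \le 1$, so $\al'(1) \le 1$.

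The argument is mostly bookkeeping, and the one point deserving care --- the main obstacle, such as it is --- is keeping straight which hypothesis is responsible for which bound: domination of $\al$ yields finiteness of $\al'$ together with the upper estimate $\al'(1) \le 1$, whereas the gauge normalization $\al(1) = 1$ yields domination, and hence positivity, of $\al'$. This interplay between the two properties is precisely what makes the class of dominating gauge norms self-reproducing under passage to the dual, and it is worth noting that nonatomicity of $\mu$ appears not to be needed for this particular lemma.
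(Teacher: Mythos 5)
Your proof is correct and complete; the paper itself states this lemma without proof (citing Lemma 2.6 of \cite{Chen2}), and your argument --- testing against unimodular multiples $uh$ to get the gauge identity, against $u=\overline{f}/\abs{f}$ itself to get domination, and using $\onenorm{h}\le\al(h)$ for finiteness and $\al'(1)\le 1$ --- is exactly the standard verification. Nothing further is needed.
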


\begin{propns} \label{prop:dualLal}
Suppose $\al'$ is the dual norm of a dominating gauge norm $\al$ on $\Linf(X,\mu)$. The dual space $\bigl(\Lal(X,\mu)\bigr)^{\#}$ is $\cL^{\al'}(X,\mu)$ in the sense that if $\ph$ is a continuous linear functional on $\Lal(X,\mu)$, then there exists a unique $F \in \cL^{\al'}(X,\mu)$ satisfying $\norm{\ph} = \al'(F)$ such that for all $f \in \Lal(X,\mu)$, $fF \in \Lone(X,\mu)$ and
\begin{equation*}
\ph(f) = \int_{X} fF \dmu.
\end{equation*}
\end{propns}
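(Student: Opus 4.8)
The plan is to identify $\bigl(\Lal(X,\mu)\bigr)^{\#}$ by first establishing the easy inclusion and then the harder surjectivity. The forward direction is routine: given $F \in \cL^{\al'}(X,\mu)$, the definition of the dual norm $\al'$ immediately shows that $f \mapsto \int_X fF \dmu$ is a bounded linear functional on $\Linf(X,\mu)$ with norm at most $\al'(F)$, and since $\Linf(X,\mu)$ is dense in $\Lal(X,\mu)$ this extends to a continuous functional on all of $\Lal(X,\mu)$. The reverse direction—showing every $\ph \in \bigl(\Lal(X,\mu)\bigr)^{\#}$ arises this way—is where the real work lies, so I would concentrate there.

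For surjectivity, the natural strategy is to restrict $\ph$ to $\Ltwo(X,\mu)$ (or to a dense subspace on which an $\Ltwo$-type pairing is available) and invoke a Radon--Nikodym or Riesz-representation argument. Since $\al$ is dominating, we have $\onenorm{f} \leq \al(f)$, so $\Lal(X,\mu) \subseteq \Lone(X,\mu)$ and convergence in $\al$-norm implies convergence in $\Lone$. First I would consider the restriction of $\ph$ to $\Linf(X,\mu)$; because $\mu$ is a finite measure, $\Linf \subseteq \Ltwo \subseteq \Lone$, and $\ph$ restricted to $\Linf$ is a bounded functional for the $\al$-norm. I would then produce a function $F$ representing $\ph$ on $\Linf$ via the measure $\nu(\E) = \ph(\chi_{\E})$, which is countably additive and absolutely continuous with respect to $\mu$ (using continuity of the norm and the dominating property to control the relevant estimates), yielding $F = \der\nu/\der\mu$ so that $\ph(\chi_\E) = \int_\E F \dmu$ and hence $\ph(s) = \int_X sF \dmu$ for simple $s$.

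The main obstacle will be controlling the size of $F$, that is, showing $\al'(F) \leq \norm{\ph}$ and in particular that $F \in \cL^{\al'}(X,\mu)$ rather than merely being locally integrable. The estimate $\al'(F) \leq \norm{\ph}$ should follow from the very definition of $\al'$ once we know $\ph(h) = \int_X hF \dmu$ for $h \in \Linf(X,\mu)$: indeed $\bigl|\int_X hF\dmu\bigr| = |\ph(h)| \leq \norm{\ph}\,\al(h)$, and taking the supremum over $\al(h) \leq 1$ gives exactly $\al'(F) \leq \norm{\ph}$. Combined with the forward inequality, this also pins down $\norm{\ph} = \al'(F)$. The delicate point is passing from the representation on $\Linf$ to all of $\Lal$: I would approximate a general $f \in \Lal(X,\mu)$ by bounded truncations $f_n = f\chi_{\{|f| \leq n\}}$, show $f_n \to f$ in $\al$-norm using continuity of $\al$, verify $f_n F \to fF$ in $\Lone$ (so that $fF \in \Lone(X,\mu)$ and the integral is well defined), and conclude $\ph(f) = \lim \ph(f_n) = \lim \int_X f_n F \dmu = \int_X fF \dmu$ by continuity of $\ph$. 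Uniqueness of $F$ is then immediate, since $\int_\E F \dmu = \int_\E G \dmu$ for all measurable $\E$ forces $F = G$ $\mu$-a.e.
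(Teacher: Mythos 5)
Your proposal is sound and is essentially the standard argument; the paper itself offers no proof of this proposition (it is quoted as Proposition 2.7 of \cite{Chen2}), and the Radon--Nikodym route you take --- representing $\ph$ on characteristic functions via $\nu(\E) = \ph(\chi_{\E})$, bounding $\al'(F) \leq \norm{\ph}$ directly from the definition of the dual norm, and then passing to general $f \in \Lal(X,\mu)$ by bounded truncations with monotone/dominated convergence --- is exactly the expected one. One point worth making explicit: the statement as written only hypothesizes that $\al$ is dominating, but your uses of $\lim_{\mu(\E)\ra 0}\al(\chi_{\E}) = 0$ (to get countable additivity and $\mu$-absolute continuity of $\nu$, and to show $f\chi_{\set{\abs{f}\leq n}} \ra f$ in $\al$-norm) are genuinely necessary --- the conclusion fails for the non-continuous dominating gauge norm $\al = \supnorm{\cdot}$, whose $\Lal$ is $\Linf(X,\mu)$ with dual strictly larger than $\Lone(X,\mu)$ --- so you are correctly reading the paper's standing convention that $\al$ is also continuous into the hypotheses, and that assumption should be stated where it is used.
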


Throughout the rest of the paper, without explicit assumption to the contrary, $\al$ will be assumed to be a continuous, dominating, normalized gauge norm on $\Lal(\Gm,\om)$. The set of these norms constitute a set that we will label $\fN$. Also $\fN_{\infty}$ will be $\fN$ with the essential supremum norm adjoined.

Hardy spaces in this context are obtained by defining $\Hal(\Gm)$ to be the subspace of $\Lal(\Gm,\om)$ obtained by taking the $\al$-norm closure of $\Hinf(\Gm)$. Since $\Lal(\Gm,\om)$ is a closed subspace of $\Lone(\Gm,\om)$, $\Hal(\Gm)$ is a closed subspace of $\Hone(\Gm)$. Thus we may define $\Hal(\Om)$ as the subspace of $\Hone(\Om)$ consisting of those functions whose boundary functions are in $\Hal(\Gm)$. For $f \in \Hal(\Om)$ and $w \in \Om$ we have $f(w) = \int_{\Gm} f \domega_{w}$, and since $\om_{w}$ is boundedly absolutely continuous with respect to $\om$, it follows from the dominating property that point evaluations are continuous linear functionals on $\Hal(\Om)$ and by extension on $\Hal(\Gm)$. Thus $\Hal(\Om)$ is a functional Banach space, and provides an equivalent but different view to $\Hal(\T)$.

\subsection{Harmonic functions, periods, and harmonic conjugates}\label{subsec:eigen}

For each $j$ between 0 and $n$, let $h_{j}$ be the solution to the Dirichlet problem on $\Om$ corresponding to the boundary function $\chi_{\Gm_{j}}$, and note that, because the boundary curves are analytic, each $h_{j}$ has a harmonic extension to an open neighborhood of $\Ombar$. Hence for $0\leq j \leq n$ we have $h_{j}(w) = \int_{\Gm} \chi_{\Gm_{j}} \domega_{w} = \om_{w}(\Gm_{j})$. We will call a real linear combination of the functions $h_{j}$ with $1 \leq j \leq n$ a \emph{harmonic unit}. (Royden~\cite{R} calls these functions harmonic \emph{measures}, but we will reserve that term for the actual measures $\om_{w}$ introduced above.) Thus if $\vec{a} = (a_{1}, \ldots, a_{n}) \in \R^{n}$, then the typical harmonic unit is the function $u_{\vec{a}} = \sum_{j=1}^{n} a_{j}h_{j}$. Observe that a harmonic unit plus a constant gives the most general linear combination of all the $h_{j}$ for $0 \leq j \leq n$. 

On the disk every harmonic function has a harmonic conjugate, but on an annulus centered at 0 for example, the harmonic function $u(w) = \log\abs{w}$ does not have a single-valued harmonic conjugate, and thus there is no analytic function on the entire annulus that has $u$ as its real part. However, the following lemma shows that one can construct an analytic function $f$ on $\Om$ from any given harmonic function $u$ by modifying $u$ by the addition of an appropriately chosen harmonic unit.

If $u$ is harmonic and real-valued on $\Om$, then let $v$ be the harmonic conjugate of $u$ on $D_{\what}$ satisfying $v(\what) = 0$. Then $f = u + iv $ is an analytic function on $D_{\what}$. If $\gm$ is a loop in $\Om$ at $\what$, i.e.\ a path in $\Om$ with $\what$ as both initial and terminal points, then $f$ can be continued analytically along $\gm$ to produce a second holomorphic function $f_{\gm} = u + iv_{\gm}$ on $D_{\what}$. The difference $f - f_{\gm} = i(v - v_{\gm})$ is both holomorphic and pure imaginary and therefore constant with the value $iv_{\gm}(\what)$. The real number $\Per(u,\gm) = v_{\gm}(\what)$ is the period of the harmonic conjugate of $u$ on $\gm$. It is not hard to see that $u$ will have a (single valued) harmonic conjugate on $\Om$ precisely when analytic continuation of $f$ along a path in $\Om$ depends only on the end points of the path, and this condition is equivalent to $\Per(u,\gm) = 0$ for every loop $\gm$ at $\what$.

The period $\Per(u,\gm)$ can be expressed in terms of $u$ with the aid of the Cauchy-Riemann equations 
\begin{equation} \label{eq:Greenu}
\Per(u,\gm) = \int_{\gm} \pardrv{v}{s} \ds = - \int_{\gm} \pardrv{u}{n} \ds,
\end{equation}
where $\pardrv{\ }{s}$ indicates the directional derivative in the direction of the unit tangent vector $\vec{t} = \gm'(t)/\abs{\gm'(t)}$ of $\gm$ at $\gm(t)$, and $\pardrv{\ \,}{n}$ indicates the directional derivative in the direction of the interior unit normal vector $\vec{n} = i\vec{t}$. Using Green's formulas, one can see that the second integral in \eqref{eq:Greenu} is constant as $\gm$ varies over its homotopy equivalence class in $\Om$, and thus the period is a function defined on the fundamental group $\pi_{1}(\Om)$ of $\Om$, which is a free group on $n$ generators, and $\Per$ is a homomorphism of $\pi_{1}(\Om)$ into $\R$. Hence to determine if $u$ has a harmonic conjugate on $\Om$ it suffices to check that $\Per(u,\gm) = 0$ for each $\gm$ in a set of generators for $\pi_1(\Om)$, and these may be taken to be curves $\gm_{j}$ for $1 \leq j \leq n$, each with $\Gm_{j}$ in its interior, with $\Gm_{k}$ in its exterior when $k \ne j$, and such that each point interior to $\Gm_{j}$ has winding number one.

\begin{lemns} \label{lem:Per}
If $u$ is a real-valued harmonic function on $\Om$, then there exists a harmonic unit $u_{\vec{a}}$ such that $u + u_{\vec{a}}$ is the real part of an analytic function on $\Om$. 
\end{lemns}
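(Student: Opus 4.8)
The plan is to reduce the existence of a harmonic conjugate to the solvability of a linear system for the coefficients $\vec{a} = (a_1, \ldots, a_n)$, and then to prove that the associated $n \times n$ coefficient matrix is nonsingular. By the discussion preceding the lemma, $u + u_{\vec{a}}$ is the real part of an analytic function on \Om\ precisely when $\Per(u + u_{\vec{a}}, \gm_j) = 0$ for each of the generators $\gm_1, \ldots, \gm_n$ of $\pi_1(\Om)$. The formula \eqref{eq:Greenu} exhibits $u \mapsto \Per(u, \gm)$ as a linear functional of the harmonic function $u$, and since $u_{\vec{a}} = \sum_{k=1}^n a_k h_k$, this requirement becomes the linear system
\begin{equation*}
\sum_{k=1}^n \Per(h_k, \gm_j)\, a_k = -\Per(u, \gm_j), \qquad 1 \leq j \leq n.
\end{equation*}
Thus it suffices to show that the period matrix $P = \bigl[\Per(h_k, \gm_j)\bigr]_{j,k=1}^n$ is invertible, for then a (unique) solution $\vec{a}$ exists and yields the required harmonic unit.

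First I would identify $P$ with a Dirichlet inner product matrix. Using \eqref{eq:Greenu} together with the homotopy invariance of the period, I may deform each loop $\gm_j$ outward to the boundary curve $\Gm_j$ without changing the integral, so that $\Per(h_k, \gm_j) = -\int_{\Gm_j} \pardrv{h_k}{\nu}\ds$, where $\nu$ denotes the normal pointing out of \Om\ along $\Gm_j$ (the limit of the left normal $i\vec{t}$ of the positively oriented $\gm_j$). Green's first identity applied to the harmonic functions $h_j, h_k$ on \Om, together with the boundary conditions $h_j = 1$ on $\Gm_j$ and $h_j = 0$ on the remaining components, then gives
\begin{equation*}
\int_{\Om} \nabla h_j \cdot \nabla h_k \, dA = \int_{\Gm_j} \pardrv{h_k}{\nu}\ds ,
\end{equation*}
so that $P = -D$, where $D = \bigl[\int_{\Om} \nabla h_j \cdot \nabla h_k\, dA\bigr]_{j,k=1}^n$ is the Gram matrix of the gradients $\nabla h_1, \ldots, \nabla h_n$ in $\Ltwo(\Om, dA)$.

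It remains to see that $D$ is nonsingular, which I would establish by showing it is positive definite. For any $\vec{a} \in \R^n$ one has
\begin{equation*}
\sum_{j,k=1}^n a_j a_k D_{jk} = \int_{\Om} \abs{\nabla u_{\vec{a}}}^2\, dA \geq 0 ,
\end{equation*}
with equality only when $u_{\vec{a}}$ is constant on the connected region \Om. But $u_{\vec{a}}$ has boundary value $a_k$ on $\Gm_k$ for $1 \leq k \leq n$ and value $0$ on $\Gm_0$, so a constant value forces that constant to be $0$ and hence $\vec{a} = 0$. Therefore $D$ is positive definite, $P = -D$ is invertible, the linear system above has a solution, and the proof is complete.

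I expect the main obstacle to be the bookkeeping in the second step: tracking the orientation and normal-direction conventions carefully enough that the homotopy deformation of $\gm_j$ to $\Gm_j$ and Green's identity combine with the correct sign to identify $P$ with $\pm D$. Once the identification with a Gram matrix is secured, the positive definiteness, and hence the invertibility, is routine.
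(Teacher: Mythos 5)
Your proposal is correct and follows the same route as the paper: both reduce the lemma to the solvability of the linear system $\sum_{k=1}^{n} p_{j,k}a_{k} = -\int_{\gm_{j}}\pardrv{u}{n}\ds$ over the generators $\gm_{1},\ldots,\gm_{n}$ of $\pi_{1}(\Om)$, using the homotopy invariance of the period to move each $\gm_{j}$ out to $\Gm_{j}$ and identify the coefficient matrix with the period matrix $p_{j,k}=\int_{\Gm}h_{j}\pardrv{h_{k}}{n}\ds$. The one place you diverge is that the paper simply cites the symmetry and invertibility of this matrix (Fisher, p.~80; Nehari, pp.~38--41), whereas you prove it: Green's first identity identifies the period matrix, up to sign, with the Gram matrix $\bigl[\int_{\Om}\nabla h_{j}\cdot\nabla h_{k}\,dA\bigr]$, and positive definiteness follows because $\int_{\Om}\abs{\nabla u_{\vec{a}}}^{2}\,dA=0$ forces $u_{\vec{a}}$ constant, hence zero by the boundary values $a_{k}$ on $\Gm_{k}$ and $0$ on $\Gm_{0}$. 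This argument is sound (the Dirichlet integrals converge since the $h_{j}$ extend harmonically past the analytic boundary), it is essentially the proof found in the cited references, and your observation that the overall conclusion is insensitive to the sign in $P=\pm D$ correctly defuses the only delicate bookkeeping point. So the proposal buys self-containedness at the cost of a little length; nothing is missing.
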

 
\begin{proof}
If $u$ is harmonic on $\Om$ and $u_{\vec{a}}$ is a harmonic unit, then, by the above discussion, it will suffice to show that $\vec{a}$ can be chosen so that $\Per(u + u_{\vec{a}},\gm_j) = 0$ for $0 \leq j \leq n$, which by equation \eqref{eq:Greenu} translates into 
 $\int_{\gm_{j}} \pardrv{u_{\vec{a}}}{n} \ds = -\int_{\gm_{j}} \pardrv{u}{n} \ds$ for $1 \leq j \leq n$. Consider the integral 
$\int_{\gm_{j}} \pardrv{u_{\vec{a}}}{n} \ds = \sum_{k=1}^{n} a_{k} \int_{\gm_{j}} \pardrv{h_{k}}{n} \ds$. Each $\gm_{j}$ is homotopic to $\Gm_{j}$ in $\Ombar$, and since $h_{k}$ is harmonic on $\Ombar$, 
$$\int_{\gm_{j}} \pardrv{h_{k}}{n} \ds = \int_{\Gm_{j}} \pardrv{h_{k}}{n} \ds= \int_{\Gm} h_{j} \pardrv{h_{k}}{n} \ds.$$ 
The $n\times n$ period matrix with entries $p_{j,k} =  \int_{\Gm} h_{j} \pardrv{h_{k}}{n} \ds$ is known to be symmetric and invertible (see \cite[page 80]{Fisher} and \cite[pages 38 -- 41]{N}). Thus the condition that $u$ imposes on $\vec{a}$ is that the system of equations 
$$ \sum_{k=1}^{n} p_{j,k} a _{k} = -\int_{\gm_{j}} \pardrv{u}{n} \ds \qquad\qquad 1 \leq j \leq n$$
has a solution, which it does by the invertibility of the period matrix.
\end{proof}

\subsection{Outer functions and Eigenfunctions of the group $\fG$}

In \cite{R} Royden defines an \emph{inner function} on $\Om$ as a bounded analytic function having a boundary limit function on $\Gm$ that has $\om$-a.e.\ constant modulus on each boundary component $\Gm_{j}$ for $0 \leq j \leq n$. He also calls an analytic function $f \in \Nev(\Om)$ \emph{outer} in case $\log\abs{f(w)} = \int_{\Gm} \log\abs{f} \domega_{w}$. If $w = \tau(z)$, then $\om_{w} = m_{z}\tau\inv$ by Lemma~\ref{lem:hameas}, and thus his condition becomes
\begin{equation*}
\log\abs{f\of\tau(z)} = \int_{\Gm} \log\abs{f} \dm_{z}\tau\inv = \intT \log\abs{f\of\tau} \dm_{z},
\end{equation*}
which is the condition that $f\of\tau$ be outer on $\D$. Thus a function $f$ is outer on $\Om$ in Royden's sense if and only if $f\of\tau$ is outer on $\D$ in the usual sense, and it follows, as in the case of the unit disk, that Royden's condition holds for all $w \in \Om$ if and only if it holds for a single point. This establishes the following lemma.

\begin{lemns}\label{lem:outer}
A function $f \in \Hone(\Gm)$ is outer if and only if $f\of\tau$ is outer in $\Hone(\T)$.
\end{lemns}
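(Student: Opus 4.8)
The plan is to translate Royden's outer condition on $\Om$ directly into the classical outer condition on $\D$ for the lifted function $f_1 = f\of\tau$, using Lemma~\ref{lem:hameas} and the transformation of integral formula relating $\om_w$ and $m_z$. Throughout I identify $f \in \Hone(\Gm)$ with its analytic extension in $\Hone(\Om) \subset \Nev(\Om)$. Being outer on $\Om$ means $\log\abs{f(w)} = \int_{\Gm} \log\abs{f} \domega_{w}$ for every $w \in \Om$, while $f_1 \in \Hone(\D)$ is outer in the usual sense exactly when $\log\abs{f_1(z)} = \intT \log\abs{f_1} \dm_{z}$ for every $z \in \D$, where $\dm_{z} = P_{z}\dm$.

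First I would fix $z \in \D$ and set $w = \tau(z)$. By Lemma~\ref{lem:hameas}, $\om_w = m_z\tau\inv$, so the transformation of integral formula applied to $\log\abs{f}$ gives
\begin{equation*}
\int_{\Gm} \log\abs{f} \domega_{w} = \intT \log\abs{f\of\tau} \dm_{z} = \intT \log\abs{f_1} \dm_{z}.
\end{equation*}
Since $w = \tau(z)$ also yields $\log\abs{f(w)} = \log\abs{f_1(z)}$, Royden's condition at the point $w$ is \emph{identical} to the disk outer condition for $f_1$ at the point $z$.

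Second, because $\tau$ maps $\D$ onto $\Om$, as $z$ ranges over $\D$ the image $w = \tau(z)$ ranges over all of $\Om$; hence the condition on $\Om$ holds for every $w$ if and only if the condition on $\D$ holds for every $z$, which is exactly the equivalence asserted in the lemma. I would also record, just as on the disk, that it suffices to verify the identity at the single base point $z = 0$ (equivalently $w = \what$): for a nonzero Nevanlinna function $\log\abs{f_1}$ never exceeds its Poisson integral, and equality at one interior point forces the inner part of the canonical factorization to be unimodular there, hence constant.

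The only point demanding genuine care, and the one I expect to be the main obstacle, is the integrability that makes these integrals meaningful and the change of variables legitimate: one must know that $\log\abs{f}$ is $\om$-integrable, equivalently that $\log\abs{f_1}$ is $m$-integrable. This is the standard consequence of $f_1$ being a nonzero member of $\Hone(\D)$, so that $\log^{-}\abs{f_1}$ is integrable by Jensen's inequality, together with the boundary correspondence $f_1 = \hat{f}\of\tau$ recorded earlier. Once this is secured, the lemma reduces to the pointwise identification above.
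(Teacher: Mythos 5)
Your proposal is correct and follows essentially the same route as the paper: the authors likewise fix $w=\tau(z)$, invoke Lemma~\ref{lem:hameas} to write $\om_{w}=m_{z}\tau\inv$, change variables to identify Royden's condition at $w$ with the classical outer condition for $f\of\tau$ at $z$, and note that surjectivity of $\tau$ together with the single-point reduction gives the equivalence. Your added remarks on the integrability of $\log\abs{f}$ and on verifying the identity only at the base point are consistent with, and slightly more explicit than, the paper's treatment.
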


We remark that the above argument showing that $f$ is outer in $\Hone(\Gm)$ if and only  if $f\of\tau$ is outer in $\Hone(\T)$ also shows that Jensen's inequality for an $\Hone(\T)$ function implies the corresponding inequality for functions in $\Hone(\Gm)$. For if $f \in \Hone(\Gm)$, then $f\of\tau \in \Hone(\T)$, and consequently for $z \in \D$, $\log\abs{f\of\tau(z)} \leq \intT \log\abs{f\of\tau} \dm_{z}$ implying $\log\abs{f(w)} \leq \int_{\Gm} \log\abs{f} \domega_{w}$. 

Additionally, we note that if $u$ is an integrable real-valued function on $\Gm$ and the harmonic function defined for $w \in \Om$ by $u(w) = \int_{\Gm} u \domega_{w}$ has a harmonic conjugate $v$ on $\Om$, then the function $f = \exp(u + iv)$ on $\Om$ is outer.

A basic tool is the following lemma which will be needed on several occasions. It is essentially Forelli's Lemma 5 in \cite{Forelli} with a slightly more explicit description of the eigenfunctions.

\begin{lemns}\label{lem:eigenfneta}
If $\eta$ is a character of $\fG$ (i.e.\ a homomorphism of $\fG$ into $\T$), then there exists an invertible outer function $F$ in $\Hinf(\T)$ such that $\abs{F}$ is constant $m$-a.e.\ on each of the sets $\tau\inv(\Gm_{k})$ for $0 \leq k \leq n$ and for every $\sig \in \fG$, $F\of\sig = \eta(\sig) F$.
\end{lemns}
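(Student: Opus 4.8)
The plan is to realize $F$ in the exponential form $F = \exp(U + iV)$, where $U$ is a bounded $\fG$-invariant harmonic function on $\D$ coming from a harmonic unit on $\Om$, and $V$ is its harmonic conjugate, arranged so that the automorphy factor picked up by $V$ under $\fG$ matches the character $\eta$. Since $\D$ is simply connected, $\tau$ is the universal covering map of $\Om$, so $\fG$ is isomorphic to $\pi_1(\Om)$, a free group on the $n$ generators corresponding to the loops $\gm_1, \ldots, \gm_n$ of Section~\ref{subsec:eigen}; write $\sig_1, \ldots, \sig_n$ for the associated deck transformations. Because $\fG$ is free, $\eta$ is determined by arbitrary prescribed values $\eta(\sig_j) \in \T$, so I may choose reals $\theta_j$ with $e^{i\theta_j} = \eta(\sig_j)$; the assignment $\sig_j \mapsto \theta_j$ then extends uniquely to an additive homomorphism $\sig \mapsto \theta_\sig$ of $\fG$ into $\R$ with $e^{i\theta_\sig} = \eta(\sig)$ for all $\sig \in \fG$.

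Next I would produce a harmonic unit with these prescribed periods. By \eqref{eq:Greenu} and the computation in the proof of Lemma~\ref{lem:Per}, the harmonic unit $u_{\vec a} = \sum_k a_k h_k$ satisfies $\Per(u_{\vec a}, \gm_j) = -\sum_k p_{j,k} a_k$, where $(p_{j,k})$ is the symmetric invertible period matrix. Hence I can solve the linear system $-\sum_k p_{j,k} a_k = \theta_j$ for $1 \le j \le n$ for $\vec a$, obtaining a bounded harmonic function $u_{\vec a}$ on $\Om$ whose conjugate has period exactly $\theta_j$ around $\gm_j$. Setting $U = u_{\vec a}\circ\tau$ gives a bounded $\fG$-invariant harmonic function on $\D$ (invariance by Lemma~\ref{lem:Ginv}), and I let $V$ be the harmonic conjugate of $U$ on $\D$ normalized by $V(0) = 0$.

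The key step is to verify the automorphy $V \circ \sig = V + \theta_\sig$. For a generator $\sig_j$, the function $V \circ \sig_j$ is a harmonic conjugate of $U \circ \sig_j = U$, hence differs from $V$ by a constant $c_j$; evaluating at $0$ and identifying $V(\sig_j(0)) - V(0)$ with the period $\Per(u_{\vec a}, \gm_j) = \theta_j$ of the (multivalued) conjugate on $\Om$ carried along a lift of $\gm_j$ from $0$ to $\sig_j(0)$, I get $c_j = \theta_j$. The relation then propagates to all of $\fG$ by the homomorphism property, so $V\circ\sig = V + \theta_\sig$, and therefore $F = \exp(U + iV)$ satisfies $F\circ\sig = e^{i\theta_\sig}F = \eta(\sig)F$.

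Finally I would collect the remaining properties. Since $U$ is bounded, $F = e^{U+iV} \in \Hinf(\D)$ with $\abs{F} = e^U$ bounded away from $0$, so $1/F = e^{-(U+iV)} \in \Hinf(\D)$ and $F$ is invertible; the representation $\log\abs{F(z)} = U(z) = \intT U \dm_{z}$ (Poisson integral of the bounded boundary values) shows $F$ is outer. Because the boundary value of $u_{\vec a}$ equals the constant $a_k$ on $\Gm_k$ (with $a_0 = 0$), $\abs{F} = e^{a_k}$ is constant $m$-a.e.\ on $\tau\inv(\Gm_k)$, which completes the verification. I expect the main obstacle to be the careful translation in the third step between the concrete period $\Per(u_{\vec a}, \gm_j)$ computed from the period matrix and the abstract automorphy factor $\theta_\sig$ of the character, i.e.\ confirming that the single-valued conjugate $V$ on $\D$ acquires exactly the increment $\theta_j$ under $\sig_j$ (including orientation conventions, which are absorbed into the sign of the linear system); everything else is a routine consequence of Lemmas~\ref{lem:Ginv}, \ref{lem:Per}, and~\ref{lem:outer}.
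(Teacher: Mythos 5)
Your proposal is correct and follows essentially the same route as the paper: build $F=\exp(U+iV)$ with $U$ coming from a harmonic unit $u_{\vec a}=\sum_k a_kh_k$, observe that $(F\of\sig)/F$ is a constant character of $\fG$, and match it to $\eta$ on generators by solving an invertible linear system governed by the period matrix. The only divergence is in how the value $V(\sig_j(0))$ is computed: you identify it directly with the period $\Per(u_{\vec a},\gm_j)$ by lifting $\gm_j$ to a path in $\D$ from $0$ to $\sig_j(0)$, whereas the paper evaluates it as $\ip{U}{\cE(\cC(-P_j))}$ and invokes Fisher's identity $\cE\cC(-P_j)=C_{\tau}(Q_j)$ --- the two computations agree up to the sign convention you already flag, which is harmless since the period matrix is invertible either way.
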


\begin{proof}  
To construct the required function $F$ we begin with a harmonic unit $u = \sum_{k=1}^{n} a_{k}h_{k}$ where each $a_{k}$ is a real constant that remains to be specified. Let $U = u\of\tau$ to obtain a bounded harmonic function on $\D$ with boundary function $U = \sum_{k=1}^{n} a_{k} \chi_{\tau\inv(\Gm_{k})}$. 

If $\cC$ is the harmonic conjugation operator on $\Ltwo(\T,m)$, then the matrix of $\cC$ relative to the usual orthonormal basis for $\Ltwo(\T,m)$ is diagonal with negatively indexed entries $i$, positively indexed entries $-i$, and 0 as entry at 0, so we observe that $\cC\adj = -\cC$. Put $V = \cC(U)$ and $F = \exp(U + iV)$. Then $F$ is outer in $\Hinf(\T)$ and $\abs{F} = \sum_{k=0}^{n} e^{a_{k}} \chi_{\tau\inv(\Gm_{k})}$, where $a_{0}=0$, so it will fulfill the requirements provided the condition $F\of\sig = \eta(\sig) F$ for all $\sig \in \fG$ is satisfied. The task at hand is to show that for $1 \leq k \leq n$ the $a_{k}$ can be chosen to satisfy this condition.

Observe that however the $a_{k}$'s are chosen, we have that $\sig \in \fG$ implies that $(F\of\sig)/F = \exp{i(V\of\sig - V)}$ because $U\of\sig = U$. The equation has an analytic function on the left side and a function taking values in $\T$ on the right, and thus these functions are constant, with the constant, say $\eta_{1}(\sig)$, dependent upon $\sig$. Clearly $\eta_{1}(\sig) = 1$ when $\sig(z) = z$, the identity of the group $\fG$, and because $\eta_{1}(\sig) = (F\of\sig)/F$, it follows that $\eta_{1}$ is a homomorphism of $\fG$ into $\T$. Thus it remains to show that the $a_{k}$ can be chosen so that $\eta_{1} = \eta$.

The group $\fG$ is isomorphic to the fundamental group $\pi_{1}(\Om)$. If $\set{\sig_{1}, \sig_{2},\ldots,\sig_{n}}$ is a set of generators of $\fG$, then since the values $\eta_{1}(\sig_{j})$ for $1 \leq j \leq n$ completely determine $\eta_{1}$, it suffices to show that by an appropriate choice of the $a_{k}$, we have $\eta_{1}(\sig_{j}) = \eta(\sig_{j})$ for $1 \leq j \leq n$. Suppose $\eta(\sig_{j}) = e^{i\tht_{j}}$. Since $\eta_{1}(\sig_{j}) = \exp{i(V\of\sig - V)} = e^{iV(\sig_{j}(0))}$, the requirement is that the $a_{k}$ can be chosen so that the system of equations $V(\sig_{j}(0)) = \tht_{j}$ for $1\leq j \leq n$ has a solution.

Observe that if $P_{j}$ is the Poisson kernel for evaluation at $\sig_{j}(0)$, then
\begin{equation}\label{eqn:vsig}
V(\sig_{j}(0)) = \ip{V}{P_{j}} = \ip{U}{\cE\bigl(\cC(-P_{j})\bigr)},
\end{equation}
where $\ip{\cdot}{\cdot}$ is the inner product on $\Ltwo(\T,m)$, and, as before, $\cE$ is the projection on the range of $C_{\tau}$. In Fisher~\cite{Fisher} it is shown that if $\gm_{j}$ is the loop chosen in the proof of Lemma~\ref{lem:Per}, then $\gm_{j}$ can be lifted to a curve in $\D$ with initial point 0 and terminal point, say $\zt_{j}$, and if $\sig_{j}$ is the unique member of $\fG$ satisfying $\sig_{j}(0) = \zt_{j}$, then the $\sig_{j}$ so chosen form a set of generators for $\fG$, and $\cE\cC(-P_{j}) = C_{\tau}(Q_{j})$, where $Q_{j}\domega = \pardrv{h_{j}}{n}\ds$. Thus equation \eqref{eqn:vsig} leads to 
\begin{equation*}
V(\sig_{j}(0)) = \ip{C_{\tau}(u)}{C_{\tau}(Q_{j})} = \sum_{k=1}^{n}a_{k}\ip{h_{k}}{Q_{j}}.
\end{equation*}
Note that $\ip{h_{k}}{Q_{j}} = \int_{\Gm} h_{j} \pardrv{h_{k}}{n} \ds = p_{j,k}$ and $(p_{j,k})$ is again the period matrix of $\Om$, which is symmetric and invertible. Thus the condition on the $a_{k}$ is $\sum_{k=1}^{n}a_{k}p_{j,k} = \tht_{j}$ for $1 \leq j \leq n$, and again it can always be satisfied.
\end{proof}

\section{Beurling-Helson-Lowdenslager Theorem for $\Ltwo(\Gm,\om)$} \label{sec:LtwoBHL}


In \cite[Theorem 1]{Forelli} (see also Fisher~\cite[Section 4.5]{Fisher}) Forelli proved that there exists a projection $\cP$ of $\Hinf(\T)$ onto $C_{\tau}\bigl( \Hinf(\Gm) \bigr)$ satisfying $\cP(fg) = f\cP(g)$ for all $f$ in $C_{\tau}\bigl( \Hinf(\Gm) \bigr)$ and $g$ in $\Hinf(\T)$. As Forelli noted, the projection is defined on $\Lone(\T,m)$ and maps it onto $C_{\tau}\bigl( \Lone(\Gm,\om) \bigr)$. It is obtained as follows. Let $P$ be the polynomial whose $n$ zeros are the critical points of the Green's function of $\Om$ with pole at $\what$. Since $\cE\bigl( \Hinf(\T) \bigr) = C_{\tau}\bigl( \Hinf(\Gm) + N(\Gm) \bigr)$, and $P\cdot \bigl( \Hinf(\Gm) + N(\Gm) \bigr) = \Hinf(\Gm)$ (see \cite{Forelli} and \cite{Fisher} again), it follows that $1/P \in \Hinf(\Gm) + N(\Gm)$ and there exists $\fp \in \Hinf(\T)$ such that $\cE(\fp) = C_{\tau}(1/P)$. Thus if $\cP$ is defined for $f \in \Lone(\T,m)$  by $\cP(f) = C_{\tau}(P)\cdot \cE(\fp f)$, then the following proposition holds (see \cite{Forelli}).

\begin{propns} \label{prop:Forelli}
\romannumbering
\begin{enumerate}
\item \label{prop:Forelli1}
$\cP\bigl(\Linf(\T) \bigr) = C_{\tau}\bigl( \Linf(\Gm) \bigr)$.
\item \label{prop:Forelli2}
$\cP\bigl(\Ltwo(\T) \bigr) = C_{\tau}\bigl( \Ltwo(\Gm) \bigr)$.
\item \label{prop:Forelli3}
For all $f \in \Ltwo(\Gm,\om)$, $\ps \in \Hinf(\T)$, $\cP(\ps C_{\tau}(f)) = \cP(\ps) C_{\tau}(f)$.
\item \label{prop:Forelli4}
$\cP\bigl(\Hinf(\T) \bigr) = C_{\tau}\bigl( \Hinf(\Gm) \bigr)$.
\item \label{prop:Forelli5}
$\cP\bigl(\Htwo(\T) \bigr) = C_{\tau}\bigl( \Htwo(\Gm) \bigr)$.
\end{enumerate}
\end{propns}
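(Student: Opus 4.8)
The engine for all five parts is the pull-out (module) property of the expectation $\cE$. By Lemma~\ref{lem:Ginv} the range of $C_{\tau}$ is exactly the algebra of $\fG$-invariant functions, so $\cE$ is the conditional expectation onto the sub-$\sigma$-algebra of $\fG$-invariant sets, and hence satisfies $\cE\bigl(C_{\tau}(f)\,g\bigr) = C_{\tau}(f)\,\cE(g)$ whenever $C_{\tau}(f)$ is $\fG$-invariant and the products lie in $\Lone(\T,m)$. I would record this at the outset together with the two facts already in hand, namely $\cE(\fp) = C_{\tau}(1/P)$ and that the zeros of $P$ lie interior to $\Om$, so that $P$ and $1/P$ are bounded and non-vanishing on $\Gm$.

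I would first dispose of (i) and (ii) simultaneously, since the argument is insensitive to whether the exponent is $\infty$ or $2$. For the inclusion $\subseteq$, take $f$ in $\Linf(\T)$ (resp.\ $\Ltwo(\T)$); since $\fp \in \Hinf(\T)$ is bounded, $\fp f$ lies in the same space, so $\cE(\fp f) = C_{\tau}(g)$ for some $g$ in $\Linf(\Gm)$ (resp.\ $\Ltwo(\Gm)$), because $\cE$ has the same range as $C_{\tau}$. Multiplying by the bounded factor $C_{\tau}(P)$ gives $\cP(f) = C_{\tau}(Pg) \in C_{\tau}\bigl(\Linf(\Gm)\bigr)$ (resp.\ $C_{\tau}\bigl(\Ltwo(\Gm)\bigr)$). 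For the reverse inclusion it suffices to check that $\cP$ is the identity on the target: for $h$ in $\Linf(\Gm)$ (resp.\ $\Ltwo(\Gm)$) the pull-out property gives $\cE\bigl(\fp\, C_{\tau}(h)\bigr) = C_{\tau}(h)\,\cE(\fp) = C_{\tau}(h/P)$, whence $\cP\bigl(C_{\tau}(h)\bigr) = C_{\tau}(P)\,C_{\tau}(h/P) = C_{\tau}(h)$. This simultaneously shows that $\cP$ is idempotent and that it maps onto the stated range.

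Part (iii) is then an immediate computation: for $f \in \Ltwo(\Gm,\om)$ and $\ps \in \Hinf(\T)$ the function $C_{\tau}(f)$ is $\fG$-invariant, so pulling it out of $\cE$ yields $\cP\bigl(\ps\, C_{\tau}(f)\bigr) = C_{\tau}(P)\,\cE\bigl(\fp\ps\, C_{\tau}(f)\bigr) = C_{\tau}(P)\,C_{\tau}(f)\,\cE(\fp\ps) = \cP(\ps)\,C_{\tau}(f)$, where in the last step I only reorder the pointwise scalar factor $C_{\tau}(f)$. For (iv) and (v) I would repeat the scheme of the second paragraph but replace the range description of $\cE$ by $\cE\bigl(\Hinf(\T)\bigr) = C_{\tau}\bigl(\Hinf(\Gm)+N(\Gm)\bigr)$ and $\cE\bigl(\Htwo(\T)\bigr) = C_{\tau}\bigl(\Htwo(\Gm)+N(\Gm)\bigr)$, both recorded in Section~\ref{sec:prelim}. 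Thus for $f \in \Hinf(\T)$ (resp.\ $\Htwo(\T)$), the product $\fp f$ stays in that space and $\cE(\fp f) = C_{\tau}(g)$ with $g \in \Hinf(\Gm)+N(\Gm)$ (resp.\ $\Htwo(\Gm)+N(\Gm)$); applying the defining identity $P\cdot\bigl(\Hinf(\Gm)+N(\Gm)\bigr)=\Hinf(\Gm)$ gives $\cP(f)=C_{\tau}(Pg)\in C_{\tau}\bigl(\Hinf(\Gm)\bigr)$ in case (iv), and idempotence on $C_{\tau}\bigl(\Hinf(\Gm)\bigr)$, already established in the $\Linf$ computation, gives equality.

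The one genuinely new point, and the step I expect to be the main obstacle, is the analyticity bookkeeping in (v): I must upgrade $P\cdot\bigl(\Hinf(\Gm)+N(\Gm)\bigr)=\Hinf(\Gm)$ to $P\cdot\bigl(\Htwo(\Gm)+N(\Gm)\bigr)\subseteq\Htwo(\Gm)$. Writing $g=g_{1}+g_{2}$ with $g_{1}\in\Htwo(\Gm)$ and $g_{2}\in N(\Gm)$, the term $Pg_{1}$ lies in $\Htwo(\Gm)$ because $P\in\Hinf(\Gm)$ and multiplication by $\Hinf(\Gm)$ preserves $\Htwo(\Gm)$, while $Pg_{2}$ lies in $\Hinf(\Gm)\subseteq\Htwo(\Gm)$ precisely by the special inclusion $P\cdot N(\Gm)\subseteq\Hinf(\Gm)$ contained in the $\Hinf$ identity. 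Apart from this, the only care needed throughout is to verify that each product fed into $\cE$ is integrable so that the pull-out property applies; this is automatic here, since $\fp$, $\ps$, and $P$ are bounded on $\Gm$ and the remaining factors lie in $\Ltwo$ or $\Linf$.
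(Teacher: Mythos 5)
Your argument is correct, and it is worth noting that the paper itself does not prove this proposition at all: it is stated with a bare citation to Forelli, after assembling exactly the ingredients you use (the definition $\cP(f)=C_{\tau}(P)\cdot\cE(\fp f)$, the identity $\cE(\fp)=C_{\tau}(1/P)$, the range identities $\cE\bigl(\Hinf(\T)\bigr)=C_{\tau}\bigl(\Hinf(\Gm)+N(\Gm)\bigr)$ and $\cE\bigl(\Htwo(\T)\bigr)=C_{\tau}\bigl(\Htwo(\Gm)\oplus N(\Gm)\bigr)$, and $P\cdot\bigl(\Hinf(\Gm)+N(\Gm)\bigr)=\Hinf(\Gm)$). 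Your reconstruction is therefore a genuine supplement rather than a paraphrase. The one step that deserves an explicit word is your identification of $\cE$ with the conditional expectation relative to the $\sigma$-algebra of $\fG$-invariant sets: the paper only names $\cE$ as the orthogonal projection in the $p=2$ case, so the pull-out property on $\Lone$ (which you need for (iii) and for the idempotence computation with unbounded $\Ltwo$ factors) rests on Lemma~\ref{lem:Ginv} identifying the range of $C_{\tau}$ with the $\fG$-invariant functions, exactly as you say; this is standard and sound. Two small housekeeping points you use implicitly and could state: the surjectivity halves of (iv) and (v) require $C_{\tau}\bigl(\Hinf(\Gm)\bigr)\subset\Hinf(\T)$ and $C_{\tau}\bigl(\Htwo(\Gm)\bigr)\subset\Htwo(\T)$ so that the elements on which you test idempotence actually lie in the domain being projected (the paper establishes this in Section~\ref{sec:prelim}), and the cancellation $C_{\tau}(P)\,C_{\tau}(1/P)=1$ uses that the zeros of $P$ are interior critical points of the Green's function, hence off $\Gm$, which you do record. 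Your handling of (v), splitting $g=g_{1}+g_{2}$ and absorbing $P\cdot N(\Gm)\subset\Hinf(\Gm)\subset\Htwo(\Gm)$, is exactly the right bookkeeping.
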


A basic idea for proving the Beurling-Helson-Lowdenslager theorem for $\Ltwo(\Gm,\om)$ is contained in the following lemma (see \cite{H}). We will write $M_{\ps}$ for the operator of multiplication by $\ps$.

\begin{lemns} \label{lem:Hasumi}
Suppose $\cM$ is a closed subspace of $\Ltwo(\Gm,\om)$ that is invariant under $M_{\ps}$ for every member $\ps$ of $\Hinf(\Gm)$ and 
\begin{equation} \label{eqn:defcN}
\cN = \clspan\set{\xi C_{\tau}(g): \xi \in \Hinf(\T) \text{ and } g \in \cM}.
\end{equation}
Then $\cN$ is a subspace of $\Ltwo(\T,m)$ that is invariant under multiplication by every function  in $\Hinf(\T)$, and $\cP(\cN) = C_{\tau}(\cM)$. If $\cM \subset \Htwo(\Gm)$, then $\cN \subset \Htwo(\T)$.
\end{lemns}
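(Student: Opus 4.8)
The plan is to verify the three assertions about $\cN$ in sequence. First, that $\cN$ is invariant under multiplication by $\Hinf(\T)$: this should be essentially immediate from the definition \eqref{eqn:defcN}. If $\xi, \eta \in \Hinf(\T)$ and $g \in \cM$, then $\eta \cdot \xi C_\tau(g) = (\eta\xi)C_\tau(g)$ is again of the generating form since $\eta\xi \in \Hinf(\T)$; thus the dense span of generators is carried into $\cN$, and multiplication by a fixed $\Hinf(\T)$ function is bounded on $\Ltwo(\T,m)$, so the closed span $\cN$ is invariant. The final sentence is equally quick: if $\cM \subset \Htwo(\Gm)$ then each $C_\tau(g) \in C_\tau(\Htwo(\Gm)) \subset \Htwo(\T)$, and $\Htwo(\T)$ is invariant under $\Hinf(\T)$, so every generator $\xi C_\tau(g)$ lies in $\Htwo(\T)$; since $\Htwo(\T)$ is closed, $\cN \subset \Htwo(\T)$.

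The substantive claim is $\cP(\cN) = C_\tau(\cM)$. I would prove the two inclusions separately. For $\cP(\cN) \subseteq C_\tau(\cM)$: since $\cP$ is continuous on $\Ltwo(\T,m)$ (it is bounded as a composition of $M_{C_\tau(P)}$, $\cE$, and $M_{\fp}$) and $C_\tau(\cM)$ is closed, it suffices to show $\cP$ sends each generator $\xi C_\tau(g)$ into $C_\tau(\cM)$. Here I would invoke Proposition~\ref{prop:Forelli}(\ref{prop:Forelli3}): $\cP(\xi C_\tau(g)) = \cP(\xi) C_\tau(g)$. Now $\cP(\xi) \in C_\tau(\Hinf(\Gm))$ by Proposition~\ref{prop:Forelli}(\ref{prop:Forelli4}), say $\cP(\xi) = C_\tau(\ps)$ with $\ps \in \Hinf(\Gm)$, so $\cP(\xi C_\tau(g)) = C_\tau(\ps) C_\tau(g) = C_\tau(\ps g)$. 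Since $\cM$ is invariant under $M_\ps$, we have $\ps g \in \cM$, whence $\cP(\xi C_\tau(g)) \in C_\tau(\cM)$. Continuity and closedness then give the inclusion on all of $\cN$.

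For the reverse inclusion $C_\tau(\cM) \subseteq \cP(\cN)$: the key observation is that the constant function $1 \in \Hinf(\T)$, so for any $g \in \cM$ the element $1 \cdot C_\tau(g) = C_\tau(g)$ is itself a generator and hence lies in $\cN$. Applying $\cP$ and using $\cP(C_\tau(g)) = \cP(1 \cdot C_\tau(g)) = \cP(1)\,C_\tau(g)$ by part~(\ref{prop:Forelli3}), together with $\cP(1) = C_\tau(1) = 1$ (from part~(\ref{prop:Forelli4}), since $1$ maps to $1$ under $\cP$), we get $\cP(C_\tau(g)) = C_\tau(g)$. Thus $C_\tau(g) \in \cP(\cN)$ for every $g \in \cM$, giving $C_\tau(\cM) \subseteq \cP(\cN)$ and completing the equality.

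The only point demanding genuine care is the reverse inclusion, specifically confirming that $C_\tau(g)$ is a \emph{fixed point} of $\cP$, i.e.\ that $\cP$ restricts to the identity on $C_\tau(\Ltwo(\Gm,\om))$. This follows because $\cP$ is a projection onto $C_\tau(\Lone(\Gm,\om))$ (as recorded in the discussion preceding Proposition~\ref{prop:Forelli}), so it acts as the identity on its range; alternatively one reads it off from part~(\ref{prop:Forelli3}) with $\ps = 1$ once $\cP(1) = 1$ is checked. I would make sure to state explicitly which of these justifications is being used, since the whole reverse inclusion hinges on it. Everything else reduces to the linearity and continuity of $\cP$ and the multiplicative intertwining in part~(\ref{prop:Forelli3}), which do the real work.
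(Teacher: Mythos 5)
Your proposal is correct and follows essentially the same route as the paper: apply Proposition~\ref{prop:Forelli}(\ref{prop:Forelli3}) and (\ref{prop:Forelli4}) to the generators $\xi C_\tau(g)$ to get $\cP(\cN)\subseteq C_\tau(\cM)$, and use the fact that $\cP$ fixes $C_\tau\bigl(\Ltwo(\Gm,\om)\bigr)$ for the reverse inclusion. Your extra care about why $\cP$ acts as the identity on the range of $C_\tau$ (it is a projection onto $C_\tau(\Lone(\Gm,\om))$, or equivalently part~(\ref{prop:Forelli3}) with $\ps=1$ and $\cP(1)=C_\tau(P)\cE(\fp)=1$) is exactly the point the paper also leans on.
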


\begin{proof}
Suppose $\cM$ is a closed subspace of $\Ltwo(\Gm,\om)$ that is invariant under multiplication by every function in $\Hinf(\Gm)$, and 
let $\cN$ be defined by \eqref{eqn:defcN}.
Then $\cN$ is the smallest $\Hinf(\T)$-invariant subspace that includes $C_{\tau}(\cM)$. 
By Proposition~\ref{prop:Forelli}\eqref{prop:Forelli4}, Forelli's projection maps $\Hinf(\T)$ onto $C_{\tau}(\Hinf(\Gm))$, and thus it follows (Proposition~\ref{prop:Forelli}\eqref{prop:Forelli3}) that when $\cP$ is applied to a generator $\xi \cdot C_{\tau}(f)$ of $\cN$ the result is $\cP(\xi)\cdot C_{\tau}(f)$, which is a member of $C_{\tau}(\cM)$ because the first factor is in $C_{\tau}(\Hinf(\Gm))$, $C_{\tau}$ preserves products, and $\cM$ is invariant under multiplication by functions in $\Hinf(\Gm)$. Thus $\cP(\cN) \subset C_{\tau}(\cM)$. The opposite inclusion is true because, by Proposition~\ref{prop:Forelli}\eqref{prop:Forelli3}, $\cP$ fixes all members of $C_{\tau}\bigl( \Ltwo(\Gm,\om) \bigr)$, and hence we have $\cP(\cN) = C_{\tau}(\cM)$. The last assertion is immediate.
\end{proof}

\begin{thmns} \label{thm:BHL}
Let $\cM$ be a closed subspace of $\Ltwo(\Gm,\om)$ that is invariant under $M_{\ps}$ for every $\ps \in \Hinf(\Gm)$. 
Then either 
\romannumbering
\begin{enumerate}
\item 
$\cM = \chi_{\E}\Ltwo(\Gm,\om)$ for some measurable subset $\E$ of $\Gm$, 
or 
\item 
$\cM = \ph \Htwo(\Gm)$ for some $\ph \in \Linf(\Gm,\om)$ such that $\abs{\ph}$ is constant on each of the components of $\Gm$.
\end{enumerate}
In the case where $\cM \subset \Htwo(\Gm)$ we have that $\ph$ is a Royden inner function.
\end{thmns}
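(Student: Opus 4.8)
The plan is to transport the problem to the unit circle by means of Lemma~\ref{lem:Hasumi}, solve it there with the classical Helson--Lowdenslager dichotomy, and then carry the answer back to $\Gm$ using Forelli's projection $\cP$ together with the eigenfunction Lemma~\ref{lem:eigenfneta}. Given $\cM$ as in the hypothesis, I would form $\cN=\clspan\set{\xi\, C_{\tau}(g):\xi\in\Hinf(\T),\ g\in\cM}$ as in \eqref{eqn:defcN}. By Lemma~\ref{lem:Hasumi}, $\cN$ is a closed $\Hinf(\T)$-invariant subspace of $\Ltwo(\T,m)$ and $\cP(\cN)=C_{\tau}(\cM)$. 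The key extra structure is that $\cN$ is invariant under composition with every $\sig\in\fG$: since $\tau\of\sig=\tau$, each generator satisfies $(\xi\, C_{\tau}(g))\of\sig=(\xi\of\sig)\,C_{\tau}(g)$, and because $\xi\mapsto\xi\of\sig$ is a bijection of $\Hinf(\T)$ while $F\mapsto F\of\sig$ is a bounded invertible operator on $\Ltwo(\T,m)$, it follows that $\cN\of\sig=\cN$.

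Next I would apply the classical Helson--Lowdenslager theorem to $\cN$, which yields two cases. In the doubly invariant case $\cN=\chi_{F}\Ltwo(\T,m)$ for a measurable $F\subseteq\T$. Here $\cN\of\sig=\cN$ forces $\chi_{F}\of\sig=\chi_{F}$ for every $\sig$, so by Lemma~\ref{lem:Ginv} there is a measurable $\E\subseteq\Gm$ with $\chi_{F}=C_{\tau}(\chi_{\E})$. Since $\cE$ is the conditional expectation onto the range of $C_{\tau}$, it is a module map over $C_{\tau}(\Linf(\Gm))$, and hence so is $\cP$; thus $C_{\tau}(\cM)=\cP(\cN)=\cP\bigl(C_{\tau}(\chi_{\E})\Ltwo(\T,m)\bigr)=C_{\tau}(\chi_{\E})\,\cP(\Ltwo(\T,m))=C_{\tau}\bigl(\chi_{\E}\Ltwo(\Gm,\om)\bigr)$ by Proposition~\ref{prop:Forelli}\eqref{prop:Forelli2}. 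Injectivity of $C_{\tau}$ then gives $\cM=\chi_{\E}\Ltwo(\Gm,\om)$, which is alternative (i).

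In the simply invariant case $\cN=q\Htwo(\T)$ for a unimodular $q$. Composition invariance gives $(q\of\sig)\Htwo(\T)=q\Htwo(\T)$, so $(q\of\sig)/q$ is a unimodular function fixing $\Htwo(\T)$, hence a unimodular constant $\lambda(\sig)$, and one checks that $\lambda$ is a homomorphism of $\fG$ into $\T$. Applying Lemma~\ref{lem:eigenfneta} with $\eta=\lambda$ produces an invertible outer $F\in\Hinf(\T)$ with $F\of\sig=\lambda(\sig)F$ and $\abs{F}$ constant on each $\tau\inv(\Gm_{k})$. Then $q/F$ is $\fG$-invariant, so $q/F=C_{\tau}(\ph)$ for some measurable $\ph$ on $\Gm$ by Lemma~\ref{lem:Ginv}, and $\abs{\ph}=1/\abs{F}$ is a positive constant on each component $\Gm_{k}$, so $\ph\in\Linf(\Gm,\om)$. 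Because $F$ is invertible in $\Hinf(\T)$ we have $q\Htwo(\T)=C_{\tau}(\ph)\Htwo(\T)$, whence the module property of $\cP$ and Proposition~\ref{prop:Forelli}\eqref{prop:Forelli5} give $C_{\tau}(\cM)=\cP\bigl(C_{\tau}(\ph)\Htwo(\T)\bigr)=C_{\tau}(\ph)\,C_{\tau}(\Htwo(\Gm))=C_{\tau}\bigl(\ph\Htwo(\Gm)\bigr)$, so $\cM=\ph\Htwo(\Gm)$, alternative (ii).

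Finally, if $\cM\subseteq\Htwo(\Gm)$ then $\cN\subseteq\Htwo(\T)$ by the last assertion of Lemma~\ref{lem:Hasumi}, which rules out the doubly invariant case; so we are in alternative (ii), and $\ph=\ph\cdot 1\in\ph\Htwo(\Gm)=\cM\subseteq\Htwo(\Gm)$. Being a bounded member of $\Htwo(\Gm)$, $\ph$ lies in $\Hinf(\Gm)$, and having constant modulus on each component of $\Gm$ it is a Royden inner function. I expect the main obstacle to be the transport back to $\Gm$ in the simply invariant case: the unimodular generator $q$ is not itself $\fG$-invariant, and the essential multiply-connected phenomenon is that its failure to descend is measured by a character of $\fG$, which must be absorbed by the eigenfunction $F$ of Lemma~\ref{lem:eigenfneta} before $\cP$ can return the answer to $\Gm$. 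A secondary point that must be checked carefully is that $\cP$ commutes with multiplication by functions in $C_{\tau}(\Linf(\Gm))$, which is used in both cases.
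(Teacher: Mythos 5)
Your proposal is correct and follows essentially the same route as the paper's own proof: reduce to $\cN$ via Lemma~\ref{lem:Hasumi}, apply the classical dichotomy on $\T$, exploit $\fG$-invariance of $\cN$ to show the character obstruction is absorbed by the eigenfunction of Lemma~\ref{lem:eigenfneta}, and return to $\Gm$ via the module property of $\cP$. The one point you flag as needing care --- that $\cP$ is a module map over $C_{\tau}(\Linf(\Gm))$ --- is exactly what the paper relies on (it follows from $\cE$ being the expectation onto the range of $C_{\tau}$ together with the formula $\cP(f)=C_{\tau}(P)\cdot\cE(\fp f)$), so there is no gap.
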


\begin{proof}
Suppose $\cM$ and $\cN$ are as above. By Lemma~\ref{lem:Hasumi},  
 $\cP(\cN) = C_{\tau}(\cM)$. Since $\cN$ is a subspace of $\Ltwo(\T)$ that is invariant under multiplication by all functions in $\Hinf(\T)$, either (i) $\cN = \chi_{\F}\Ltwo(\T,m)$ where $\F$ is a measurable subset of $\T$, or else (ii) $\cN = q \Htwo(\T)$ where $q$ is a unimodular function on $\T$.
Thus it remains to analyze $\cP(\chi_{\F}\Ltwo(\T,m))$ in case one and $\cP(q\Htwo(\T))$ in case two. For this we use Lemma~\ref{lem:Ginv}, arguing the two cases separately.

Observe that the generators of $\cN$ are mapped into other generators of $\cN$ under composition with members of $\fG$, and thus $\cN$ is invariant under composition with elements of $\fG$. Therefore, if $\sig \in \fG$, then in case one $\chi_{\F}\of\sig = \chi_{\F}f$ for some $f$ in $\Ltwo(\T,m)$, and consequently $\chi_{\F}\of\sig \leq \chi_{\F}$. Since this holds for all members of the group $\fG$, it follows that $\chi_{\F}\of\sig = \chi_{\F}$. Thus there exists a measurable subset $\E$ of $\Gm$ such that $\chi_{\F} = \chi_{\E}\of\tau$. Thus we have
\begin{equation*}
C_{\tau}(\cM) = \cP(\cN) = (\chi_{\E}\of\tau)\cdot \cP\bigl( \Ltwo(\T,m) \bigr) = C_{\tau}(\chi_{E}\cdot\Ltwo(\Gm)),
\end{equation*} 
which implies $\cM = \chi_{\E}\cdot\Ltwo(\Gm,\om)$, thereby completing the proof in this case.

In the second case, invariance of $\cN$ under composition with members of $\fG$ leads to $q\of\sig = q f_{\sig}$ for some $f_{\sig} \in \Htwo(\T)$. Since $q\of\sig$ and $q$ are unimodular, it follows that $f_{\sig}$ is inner. Also $q\of\sig\inv = q f_{\sig\inv}$, and hence on composing with $\sig$ we obtain $q = (qf_{\sig})\cdot(f_{\sig\inv}\of\sig)$, which implies $1 = f_{\sig}\cdot (f_{\sig\inv}\of\sig)$. Thus the inner function $f_{\sig}$ has an inverse in $\Hinf(\T)$ and is therefore a constant in $\T$. Call it $\eta(\sig)$, so $q\of\sig = \eta(\sig)q$. Again it is clear that $\eta(\sig) = 1$, when $\sig(z) = z$, the identity in $\fG$. Also we have $\eta(\sig_{1}\of\sig_{2})q = (q\of\sig_{1})\of\sig_{2} = \eta(\sig_{1}) \eta(\sig_{2})q$, and it follows that $\eta$ is a homomorphism of $\fG$ into $\T$. By Lemma~\ref{lem:eigenfneta}, there exists an invertible function $F$ in $\Hinf(\T)$, with $\abs{F}$ constant on the sets $\tau\inv(\Gm_{j})$ for $0 \leq j \leq n$, and satisfying $F\of\sig = \eta(\sig)F$ for all $\sig \in \fG$. It follows that $q/F$ is unchanged by composition with members of $\fG$, and therefore Lemma~\ref{lem:Ginv} implies $q/F = \ph\of\tau$ for some $\ph \in \Linf(\Gm,m)$. Since $q$ is unimodular and $\abs{F}$ is constant on each set $\tau\inv(\Gm_{j})$, it follows that $\abs{\ph}$ is constant on each of the sets $\Gm_{j}$.

Since $F\Hinf(\T) = \Hinf(\T)$, we have $\cN = (\ph\of\tau)\cdot \Htwo(\T)$. As in the first case we now have 
\begin{equation*}
C_{\tau}(\cM) = \cP(\cN) = (\ph\of\tau)\cdot \cP(\Htwo(\T)) = C_{\tau}(\ph\cdot\Htwo(\Gm)),
\end{equation*}
which implies $\cM = \ph\cdot\Htwo(\Gm)$ as required.

If $\cM \subset \Htwo(\Gm)$, case (i) can not occur, and consequently $\ph \in \Htwo(\Gm)$. Since $\abs{\ph}$ is constant on the connected components of $\Gm$, $\ph$ is a Royden inner function. This completes the proof.
\end{proof}

The question of uniqueness of the representation will be addressed in Section~\ref{sec:inout} (see Theorem~\ref{thm:uniqueness}). In the following we identify the cyclic vectors for $\Htwo(\Gm)$ as the outer functions. The general case of this result will also be obtained in Section~\ref{sec:inout} (see Theorem~\ref{thm:bnddcyclicouter}).

\begin{thmns} \label{thm:outercyclicH2}
A function $f \in \Htwo(\Gm)$ is cyclic in the sense that $\Hinf(\Gm)\cdot f$ is dense in $\Htwo(\Gm)$ if and only if $f$ is outer.
\end{thmns}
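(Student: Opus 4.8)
The plan is to transfer the problem to the unit disk by means of the covering map $\tau$ and Forelli's projection $\cP$, where the analogous statement is the classical Beurling theorem. The two facts doing the real work are Lemma~\ref{lem:outer} (outerness is preserved under $f \mapsto f\of\tau$) and Proposition~\ref{prop:Forelli}. Writing $\cM = \clspan\set{\ps f : \ps \in \Hinf(\Gm)}$ for the smallest closed $\Hinf(\Gm)$-invariant subspace of $\Htwo(\Gm)$ containing $f$, cyclicity of $f$ means $\cM = \Htwo(\Gm)$; since the rational functions with poles off $\Ombar$ make $\Hinf(\Gm)$ dense in $\Htwo(\Gm)$, this is equivalent to $1 \in \cM$. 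The crux is then the equivalence
\begin{equation*}
f \text{ is cyclic in } \Htwo(\Gm) \iff f\of\tau \text{ is cyclic in } \Htwo(\T),
\end{equation*}
after which the ordinary Beurling theorem on $\D$ identifies cyclicity of $f\of\tau$ with its being outer, and Lemma~\ref{lem:outer} converts this into outerness of $f$. Chaining these equivalences yields both directions of the theorem at once.

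The forward half of the equivalence is straightforward. If $f$ is cyclic, choose $\ps_{k} \in \Hinf(\Gm)$ with $\ps_{k} f \to 1$ in $\Htwo(\Gm)$; applying the isometry $C_{\tau}$ and using that $C_{\tau}$ is multiplicative with $C_{\tau}(\ps_{k}) \in \Hinf(\T)$, one gets $C_{\tau}(\ps_{k})\,(f\of\tau) = C_{\tau}(\ps_{k} f) \to 1$ in $\Htwo(\T)$, so $1$ lies in the cyclic subspace generated by $f\of\tau$ and hence $f\of\tau$ is cyclic.

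The reverse half is the main obstacle and is exactly where Forelli's projection is needed. Starting from $\xi_{k}\,(f\of\tau) \to 1$ in $\Htwo(\T)$ with $\xi_{k} \in \Hinf(\T)$, I would apply the bounded projection $\cP$ (bounded on $\Ltwo(\T,m)$ by Proposition~\ref{prop:Forelli}\eqref{prop:Forelli2}) and push the multiplier through using Proposition~\ref{prop:Forelli}\eqref{prop:Forelli3}, giving $\cP(\xi_{k}\,C_{\tau}(f)) = \cP(\xi_{k})\,C_{\tau}(f)$. By Proposition~\ref{prop:Forelli}\eqref{prop:Forelli4} each $\cP(\xi_{k})$ lies in $C_{\tau}(\Hinf(\Gm))$, say $\cP(\xi_{k}) = C_{\tau}(\ps_{k})$ with $\ps_{k} \in \Hinf(\Gm)$, while $\cP$ fixes $1 = C_{\tau}(1) \in C_{\tau}(\Ltwo(\Gm,\om))$. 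Therefore $C_{\tau}(\ps_{k} f) = \cP(\xi_{k}\,C_{\tau}(f)) \to 1$, and since $C_{\tau}$ is isometric, $\ps_{k} f \to 1$ in $\Htwo(\Gm)$, so $1 \in \cM$ and $f$ is cyclic. The only points that require care are the continuity of $\cP$ and the intertwining identity of Proposition~\ref{prop:Forelli}\eqref{prop:Forelli3}, which is what allows cyclicity, established on $\D$, to be pulled back to $\Om$; everything else is formal once the disk Beurling theorem and Lemma~\ref{lem:outer} are invoked.
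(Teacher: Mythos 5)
Your proof is correct and follows essentially the same route as the paper's: the paper simply packages your Forelli-projection sequence argument into Lemma~\ref{lem:Hasumi} (the identity $\cP(\cN) = C_{\tau}(\cM)$) and then invokes the classical Beurling theorem on $\D$ together with Lemma~\ref{lem:outer}, exactly as you do.
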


\begin{proof}
If $f$ is cyclic, then the invariant subspace $\cM$ it generates is all of $\Htwo(\Gm)$. In this case the invariant subspace $\cN$ of Lemma~\ref{lem:Hasumi} is generated by $f\of\tau$ and is also all of $\Htwo(\T)$. Thus $f\of \tau$ is outer in $\Htwo(\T)$, and consequently Lemma~\ref{lem:outer} implies that $f$ is outer in $\Htwo(\Gm)$. Conversely, if $f$ is outer in $\Htwo(\Gm)$, then Lemma~\ref{lem:outer} implies that $f \of \tau$, which generates $\cN$, is outer in $\Htwo(\T)$, and thus $\cN = \Htwo(\T)$. This in turn implies that $\cM = \Htwo(\Gm)$, and consequently $f$ is cyclic.
\end{proof}

\section{Beurling-Helson-Lowdenslager Theorem for $L^{\al}(\Gm,\om)$}

Throughout this section $\al$ will be a continuous, dominating, normalized gauge norm on $\Linf(\Gm,\om)$, i.e.\ $\al \in \fN$. To generalize Theorem~\ref{thm:BHL} to the spaces $\Lal(\Gm,\om)$, we use the same technique as that of the first author in \cite{Chen2} with a few modifications necessitated by the multiple connectedness of the domain of the members of $\Hal(\Om)$. In that paper invariant subspaces of the single operator multiplication by $z$ on $\Lal(\T,m)$ were considered, in which case invariance under that operator is enough to imply invariance under multiplication by all $\Hinf(\T)$ functions. In the multiply connected case, the invariant subspaces of the operator multiplication by $z$ are more complicated (see \cite{Hitt, AR1, AR2}), and so we  assume invariance under multiplication by all $\Hinf(\Gm)$ functions. A basic idea in \cite{Chen2} was also devised earlier by Gamelin~\cite{G1, G2} to study invariant subspaces in certain generalized $\Hp$ spaces. 

Let $\B$ be the closed unit ball of $\Linf(\Gm,\om)$. The next lemma is Lemma 2.9 of \cite{Chen2}.

\begin{lemns} \label{lem:ballLinf}
If $\al \in \fN$, then
\romannumbering
\begin{enumerate}
\item 
on $\B$ the $\al$-topology, the $\twonorm{\cdot}$-topology, and the topology of convergence in measure coincide, and
\item 
$\bB$ is $\al$-closed.
\end{enumerate}
\end{lemns}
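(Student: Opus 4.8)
The plan is to exploit the metrizability of all three topologies on $\B$: the $\al$- and $\twonorm{\cdot}$-topologies come from norms, and convergence in measure on the probability space $(\Gm,\om)$ is metrized by $d(f,g)=\int_\Gm\abs{f-g}(1+\abs{f-g})\inv\,\domega$. Hence it suffices to check that the three notions of \emph{sequential} convergence on $\B$ agree. The facts I would use repeatedly are that every member of $\B$ is bounded by $1$ in modulus, that $\al$ dominates $\onenorm{\cdot}$, that $\al$ is continuous, and that a gauge norm, being absolute ($\al(\abs{f})=\al(f)$), is automatically monotone, so $0\le g\le h$ gives $\al(g)\le\al(h)$.

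For part (i) I would close a cycle of implications. That convergence in measure and $\twonorm{\cdot}$-convergence agree on $\B$ is classical: for a uniformly bounded sequence on a finite measure space convergence in measure forces $\Ltwo$-convergence, while the reverse is immediate from Chebyshev's inequality. That $\al$-convergence implies convergence in measure is forced by the dominating property, since $\onenorm{f_n-f}\le\al(f_n-f)\to 0$ and $\Lone$-convergence implies convergence in measure. The one substantial step is the converse, that convergence in measure implies $\al$-convergence on $\B$. Given $f_n\to f$ in measure with all terms in $\B$, set $g_n=f_n-f$, so $\abs{g_n}\le 2$ and $g_n\to 0$ in measure. Fixing $\varepsilon>0$ and putting $E_n=\set{\abs{g_n}>\varepsilon}$, the splitting $\abs{g_n}=\abs{g_n}\chi_{E_n}+\abs{g_n}\chi_{\Gm\setminus E_n}$ together with the triangle inequality, monotonicity, and homogeneity, using the bounds $\abs{g_n}\chi_{E_n}\le 2\chi_{E_n}$ and $\abs{g_n}\chi_{\Gm\setminus E_n}\le\varepsilon\cdot 1$, yields
\[ \al(g_n)\le 2\,\al(\chi_{E_n})+\varepsilon. \]
Since $\om(E_n)\to 0$, continuity of $\al$ gives $\al(\chi_{E_n})\to 0$, so $\limsup_n\al(g_n)\le\varepsilon$; letting $\varepsilon\to 0$ finishes the step.

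For part (ii) I would show directly that an $\al$-limit of a sequence in the ball lands back in the ball. If $f_n\in\B$ and $\al(f_n-f)\to 0$ for some $f\in\cLal(\Gm,\om)$, then the dominating property gives $\onenorm{f_n-f}\to 0$; passing to a subsequence converging $\om$-a.e.\ to $f$, the a.e.\ bound $\abs{f_n}\le 1$ is inherited by the limit, so $\abs{f}\le 1$ a.e.\ and $f\in\B$. This proves that $\B$ is $\al$-closed.

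The main obstacle is the measure-implies-$\al$ direction of part (i): this is precisely where the continuity hypothesis on $\al$ is indispensable, since without control on $\al(\chi_{E_n})$ as $\om(E_n)\to 0$ a sequence that is uniformly small in measure could still be large in $\al$-norm. The truncation above isolates that difficulty into the single term $2\,\al(\chi_{E_n})$, after which continuity does the rest; everything else reduces to boundedness, the dominating property, and standard measure-theoretic convergence facts.
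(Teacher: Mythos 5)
Your argument is correct, but note that the paper does not actually prove this lemma at all: it is quoted verbatim as Lemma 2.9 of the cited reference \cite{Chen2}, so you are supplying a proof where the authors merely cite one. What you wrote is the standard argument and, as far as I can tell, essentially the one in \cite{Chen2}: metrizability of all three topologies reduces everything to sequences, the dominating property gives $\al$-convergence $\Rightarrow$ $\Lone$ $\Rightarrow$ measure, the classical bounded-convergence facts give measure $\Leftrightarrow$ $\twonorm{\cdot}$ on a uniformly bounded set, and the truncation $\al(g_n)\le 2\al(\chi_{E_n})+\varepsilon$ isolates exactly where continuity of $\al$ enters; part (ii) then follows by passing to an a.e.\ convergent subsequence. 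The one step you wave at too quickly is the assertion that absoluteness of $\al$ is ``automatically'' monotonicity. That is true but not free: the clean justification is that if $\abs{f}\le\abs{g}$ then $f=ug$ with $\abs{u}\le 1$, and any such $u$ is the average of two unimodular functions $v_{1},v_{2}$, whence $\al(f)\le\tfrac12\bigl(\al(v_{1}g)+\al(v_{2}g)\bigr)=\al(g)$ by absoluteness. Since monotonicity is used in both the $2\al(\chi_{E_n})$ bound and the $\varepsilon\al(1)$ bound, you should either include this one-line argument or cite it from \cite{Chen1}; with that patched, the proof is complete.
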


\begin{lemns} \label{lem:HoneLal}
$\Hal(\Gm) = \Hone(\Gm) \cap \Lal(\Gm,\om).$
\end{lemns}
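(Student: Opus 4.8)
The plan is to prove the set equality $\Hal(\Gm) = \Hone(\Gm) \cap \Lal(\Gm,\om)$ by establishing the two inclusions separately. The inclusion $\Hal(\Gm) \subset \Hone(\Gm) \cap \Lal(\Gm,\om)$ is the routine direction and follows almost immediately from the definitions: by construction $\Hal(\Gm)$ is the $\al$-closure of $\Hinf(\Gm)$ inside $\Lal(\Gm,\om)$, so every member of $\Hal(\Gm)$ lies in $\Lal(\Gm,\om)$. Moreover, as noted in the discussion of gauge norms, $\al$ is dominating so $\onenorm{\cdot} \leq \al(\cdot)$, whence $\Lal(\Gm,\om) \subset \Lone(\Gm,\om)$ and the $\al$-closure of $\Hinf(\Gm)$ is contained in its $\onenorm{\cdot}$-closure, which is $\Hone(\Gm)$. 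This gives $\Hal(\Gm) \subset \Hone(\Gm)$, and combined with the first observation yields the forward inclusion.

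The substantive direction is $\Hone(\Gm) \cap \Lal(\Gm,\om) \subset \Hal(\Gm)$. Here I take an arbitrary $f \in \Hone(\Gm) \cap \Lal(\Gm,\om)$ and must produce a sequence (or net) of $\Hinf(\Gm)$ functions converging to $f$ in the $\al$-norm. The natural idea is to approximate $f$ by bounded analytic functions obtained by truncation. One standard device is to write, for each positive integer $N$, the function $g_N = f \cdot \frac{N}{\max(N,\abs{f})}$ or a comparable analytic truncation; since $\abs{f}$ is integrable and finite a.e., $g_N \to f$ pointwise a.e.\ and $\abs{g_N} \leq \abs{f}$. The difficulty is that naive truncation of $\abs{f}$ does not keep the function analytic, so instead I would use the outer/inner structure: factor $f = \ph \cdot h$ where $h$ is outer and $\ph$ is inner (in Royden's sense), transport the problem to the disk via $C_{\tau}$ using Lemma~\ref{lem:outer} and the identification of $\Hal(\Gm)$ with $\fG$-invariant functions in $\Hal(\T)$, and there truncate the outer factor by replacing $h$ with an analytic function whose modulus is $\min(\abs{h}, N)$, obtained by exponentiating a truncation of $\log\abs{h}$ together with its harmonic conjugate. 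This produces bounded analytic approximants dominated by $\abs{f}$.

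Once I have bounded analytic functions $f_N$ with $f_N \to f$ pointwise a.e.\ (equivalently in measure on bounded sets) and $\abs{f_N} \leq \abs{f}$, the convergence $\al(f_N - f) \to 0$ follows from a dominated-convergence argument for the gauge norm: the continuity of $\al$ (the defining property $\lim_{\om(\E)\to 0}\al(\chi_\E)=0$) together with the domination $\abs{f_N - f} \leq 2\abs{f} \in \Lal(\Gm,\om)$ controls the tails, while on the set where $\abs{f}$ is bounded the convergence in measure on $\B$-type balls is governed by Lemma~\ref{lem:ballLinf}. This lets me split $f_N - f$ into a part supported where $\abs{f}$ is large (small in $\al$-norm uniformly in $N$ by absolute continuity of the $\al$-norm) and a part that is uniformly bounded and converges in measure, hence in $\al$, by Lemma~\ref{lem:ballLinf}(i).

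The main obstacle I anticipate is the analytic truncation step: ensuring that the approximating functions $f_N$ are genuinely in $\Hinf(\Gm)$ (not merely bounded and analytic on $\D$ but $\fG$-invariant) while simultaneously controlling $\abs{f_N}$ by $\abs{f}$ so that the dominated-convergence estimate for $\al$ goes through. Transporting to the disk via $C_\tau$ and using the $\fG$-invariance characterization of $\Hal(\Om)$ should resolve the invariance issue, since the outer function construction via harmonic conjugation (as in Lemma~\ref{lem:eigenfneta}) respects $\fG$, but care is needed to verify that the truncated outer factor, after multiplication by the inner part $\ph$, descends to a function on $\Gm$ rather than a multiple-valued one. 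I expect this to be handled by working with $\log\abs{f}$, whose harmonic extension has a harmonic conjugate only after adjusting by a harmonic unit (Lemma~\ref{lem:Per}); the truncation must be arranged so that the period obstruction is carried along consistently, which is the technical heart of the argument.
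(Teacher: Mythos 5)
Your two inclusions are the right skeleton and the easy direction is fine, but the hard direction as written has a genuine gap at exactly the point you yourself flag as ``the technical heart'': the $\fG$-equivariance of the truncated outer factor. If you lift $f$ to $f\of\tau=\ph_{1}h_{1}$ on $\D$ (inner times outer) and replace $h_{1}$ by the outer function $h_{1,N}$ with boundary modulus $\min(\abs{h_{1}},N)$, then $h_{1,N}\of\sig=\eta_{N}(\sig)\,h_{1,N}$ for a character $\eta_{N}$ of $\fG$ that \emph{changes with} $N$, while $\ph_{1}\of\sig=\overline{\eta_{0}(\sig)}\,\ph_{1}$ carries the character $\eta_{0}$ of the original $h_{1}$. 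Hence $\ph_{1}h_{1,N}$ is not $\fG$-invariant and does not descend to a function in $\Hinf(\Gm)$. To repair this you must multiply by an invertible eigenfunction $F_{N}$ from Lemma~\ref{lem:eigenfneta} with character $\eta_{0}\overline{\eta_{N}}$, and then verify (i) that the exponents $\vec{a}_{N}$ solving the period-matrix system can be chosen bounded, so that $\abs{F_{N}}$ is uniformly bounded above and below and the domination $\abs{g_{N}}\leq C\abs{f}$ survives, and (ii) that $F_{N}\to 1$ in a usable sense; none of this is carried out. A second, smaller gap: the boundary values of $h_{1,N}$ converge to those of $h_{1}$ only in measure (via the weak-type $(1,1)$ bound for the conjugation operator applied to $\min(0,\log N-\log\abs{h_{1}})\to 0$ in $\Lone(\T,m)$), not obviously pointwise a.e.; convergence in measure plus domination does suffice for your Lemma~\ref{lem:ballLinf} argument, but the a.e.\ claim should not be asserted. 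The dominated-convergence step itself (split at $\set{\abs{f}>M}$, use absolute continuity of the $\al$-norm of $\Lal$-functions on the tail and Lemma~\ref{lem:ballLinf}(i) on the bounded part) is sound, granted the monotonicity of $\al$ on nonnegative functions.

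For comparison, the paper avoids all of this by a duality argument: a functional annihilating $\Hal(\Gm)$ is represented by $F\in\cL^{\al'}(\Gm,\om)$ with $\int_{\Gm}fF\domega=0$ for all $f\in\Hinf(\Gm)$; Fisher's Theorem 4.8 gives $PF\in\Hone(\Gm)$, hence $F=F_{1}+F_{N}$ with $F_{1}\in\Hone_{0}(\Gm)$ and $F_{N}\in N(\Gm)$, and one checks directly that any $g\in\Hone(\Gm)\cap\Lal(\Gm,\om)$ is annihilated by such $F$ (the $F_{1}$ part because $C_{\tau}(gF_{1})\in\Hone_{0}(\T)$, the $F_{N}$ part by orthogonality); Hahn--Banach finishes. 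If you want to keep your constructive route, the eigenfunction bookkeeping above must be done in full; as it stands the central difficulty is identified but not resolved.
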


\begin{proof}
The inclusion of $\Hal(\Gm)$ in $\Hone(\Gm)$ is a consequence of the dominating property, and its inclusion in the intersection follows. For the opposite inclusion, suppose $\ph \in \Lal(\Gm,\om)^{\#}$ is in the annihilator of $\Hal(\Gm)$. By Proposition~\ref{prop:dualLal}, there exists $F \in \cL^{\al'}(\Gm,\om)$ such that for all $f \in \Lal(\Gm,\om)$, $fF \in \Lone(\Gm,\om)$ and $\ph(f) = \int_{\Gm}fF \domega$.

Because $\ph$ is in the annihilator of $\Hal(\Gm)$ we have $\int_{\Gm} fF \domega = 0$ for all $f \in \Hinf(\Gm)$, and it follows from Theorem 4.8 of \cite{Fisher} that $PF \in \Hone(\Gm)$, where $P$ is the polynomial whose zeros are the critical points of the Green's function of $\Om$ with pole at $\what$. Further, because $P\cdot (\Hone(\Gm) + N(\Gm)) = \Hone(\Gm)$, it follows that $F \in \Hone(\Gm) + N(\Gm)$, and thus $F = F_{1} + F_{N}$, where $F_{1} \in \Hone(\Gm)$ and $F_{N} \in N(\Gm)$. Since $1 \in \Hinf(\Gm)$ and $\int_{\Gm} f \domega = 0$ for all $f \in N(\Gm)$, $\int_{\Gm} 1 F_{1} \domega = \int_{\Gm} 1 F \domega = 0$, and thus $F_{1} \in \Hone_{0}(\Gm)$.

Suppose $g \in \Hone(\Gm) \cap \Lal(\Gm,\om)$. Then $gF \in \Lone(\Gm,\om)$, and because $F_{N}$ is bounded, $gF_{1} \in \Lone(\Gm,\om)$, and consequently $C_{\tau}(gF_{1}) \in \Lone(\T,m)$. Also, from $g \in \Hone(\Gm)$ and $F_{1} \in \Hone_{0}(\Gm)$, it follows that $C_{\tau}(g) \in \Hone(\T)$ and $C_{\tau}(F_{1}) \in \Hone_{0}(\T)$. Thus the product of $C_{\tau}(g)$ and $C_{\tau}(F_{1})$ is in $\Hone_{0}(\T)$, which implies $\int_{\Gm} gF_{1} \domega = \intT (g F_{1})\of \tau \dm = 0$. Since $\Htwo(\Gm)\adj$ and $N(\Gm)$ are orthogonal in $\Ltwo(\Gm,\om)$, and since $\Htwo(\Gm)$ is dense in $\Hone(\Gm)$, it follows that $\int_{\Gm} g F_{N}\domega = 0$. Consequently $\ph(g) = 0$, and the Hahn-Banach theorem now implies that $g \in \Hal(\Gm)$, thereby giving us the required opposite inclusion.
\end{proof}

The next lemma is fundamental for what follows.

\begin{lemns} \label{lem:Saito}
If $b \in \Linf(\Gm,\om)$ and $1/b \in \Lal(\Gm,\om)$, then there exists a function $\ps$ having $\om$-a.e.\ constant modulus on each connected component of $\Gm$, and there exists an outer function $h \in \Hinf(\Gm)$ such that $b = \ps h$ and $1/h \in \Hal(\Gm)$.
\end{lemns}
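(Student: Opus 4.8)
The plan is to construct $h$ as the outer function on $\Om$ whose boundary modulus matches $\abs{b}$ up to a locally constant correction forced by the periods, and then to set $\ps = b/h$. First I would check that $u := \log\abs{b}$ lies in $\Lone(\Gm,\om)$: since $b \in \Linf(\Gm,\om)$ the positive part $\log^{+}\abs{b}$ is bounded, while $\log^{-}\abs{b} = \log^{+}\abs{1/b} \le \abs{1/b}$ is integrable because $1/b \in \Lal(\Gm,\om) \subset \Lone(\Gm,\om)$ by the dominating property. In particular $b \ne 0$ $\om$-a.e., so reciprocals make sense.

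Next I would extend $u$ harmonically to $\Om$ by $u(w) = \int_{\Gm} u \domega_{w}$ and invoke Lemma~\ref{lem:Per} to produce a harmonic unit $u_{\vec a} = \sum_{j=1}^{n} a_{j} h_{j}$ so that $u + u_{\vec a}$ is the real part of an analytic function $g = (u + u_{\vec a}) + iv$ on $\Om$. The point is that $u$ itself need not admit a single-valued harmonic conjugate; correcting it by a harmonic unit is exactly what the multiple connectivity demands, and this is the one genuinely nontrivial step, the place where the $n$ holes intervene. Because $h_{j}$ has boundary function $\chi_{\Gm_{j}}$, the boundary function of $u + u_{\vec a}$ is $\log\abs{b} + a_{k}$ on $\Gm_{k}$ (with $a_{0} = 0$). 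Setting $h = \exp(g)$, the remark following Lemma~\ref{lem:outer} shows $h$ is outer; and since $u(w) \le \log\supnorm{b}$ and each $h_{j}$ is bounded, $u + u_{\vec a}$ is bounded above, so $\abs{h} = \exp(u + u_{\vec a})$ is bounded and $h \in \Hinf(\Gm)$.

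I would then define $\ps = b/h$, which is well defined $\om$-a.e.\ since $h$ is outer and hence nonvanishing a.e. On $\Gm_{k}$ we have $\abs{h} = \exp(\log\abs{b} + a_{k}) = \abs{b}\,e^{a_{k}}$, so $\abs{\ps} = e^{-a_{k}}$ is constant on each component $\Gm_{k}$, and $b = \ps h$ by construction.

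Finally, for $1/h$ the identity $1/h = \exp\bigl(-(u+u_{\vec a}) - iv\bigr)$ together with the same remark after Lemma~\ref{lem:outer} shows $1/h$ is outer, now with boundary modulus $\abs{1/h} = e^{-a_{k}}\abs{1/b}$ on $\Gm_{k}$. Since $\abs{1/b} \in \Lal(\Gm,\om)$ and the factors $e^{-a_{k}}$ are bounded, $\abs{1/h} \in \Lal(\Gm,\om)$, whence $1/h \in \Lal(\Gm,\om)$; in particular $\abs{1/h} \in \Lone(\Gm,\om)$. Transferring to the disk via $\tau$ (Lemma~\ref{lem:outer}), $(1/h)\of\tau$ is outer on $\D$ with integrable boundary modulus, hence lies in $\Hone(\D)$, giving $1/h \in \Hone(\Gm)$. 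Therefore $1/h \in \Hone(\Gm) \cap \Lal(\Gm,\om) = \Hal(\Gm)$ by Lemma~\ref{lem:HoneLal}, completing the construction. The only delicate point beyond routine bookkeeping is the period correction via Lemma~\ref{lem:Per}; everything else reduces to standard outer-function facts pulled back from $\D$ through $\tau$.
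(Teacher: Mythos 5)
Your proposal is correct and follows essentially the same route as the paper: integrability of $\log\abs{b}$, a period correction by a harmonic unit via Lemma~\ref{lem:Per}, exponentiation to get the outer function $h$, setting $\ps = b/h$, and concluding $1/h \in \Hal(\Gm)$ from Lemma~\ref{lem:HoneLal}. You supply somewhat more detail than the paper (the splitting of $\log\abs{b}$ into positive and negative parts, and the verification that $1/h$ lies in both $\Hone(\Gm)$ and $\Lal(\Gm,\om)$), but the argument is the same.
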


\begin{proof} 
If $b$ satisfies the hypothesis, then, since $\Lal(\Gm,\om) \subset \Lone(\Gm,\om)$, it follows that $\log\abs{b}$ is integrable, and hence there exists a harmonic function $u$ on $\Om$ with $\log\abs{b}$ as its boundary function. By Lemma~\ref{lem:Per}, there exists a harmonic unit $u_{0}$ such that $u-u_{0}$ has a harmonic conjugate function $v$ on $\Om$.  Put $h = \exp(u-u_{0} + iv)$ to get an outer function on $\Om$ such that $\abs{h}$ has a boundary function $\abs{b} e^{-u_{0}}$, and thus $h \in \Hinf(\Gm)$. If $\ps = b/h$, then $\abs{\ps} = e^{u_{0}}$ which is constant on each of the sets $\Gm_{j}$. Finally, $1/h$ is in both $\Hone(\Gm)$ and $\Lal(\Gm,\om)$, and thus Lemma~\ref{lem:HoneLal} implies $1/h \in \Hal(\Gm)$.
\end{proof}

\begin{propns} \label{prop:clwkstrtostr}
Let $M$ be a weak* closed subspace of $\Linf(\Gm,\om)$ that is invariant under multiplication by all members of $\Hinf(\Gm)$. If $\cM$ is the closure of $M$ in $\Lal(\Gm,\om)$, then $\cM$ is also invariant under multiplication by members of $\Hinf(\Gm)$ and $M = \cM \cap \Linf(\Gm,\om)$.
\end{propns}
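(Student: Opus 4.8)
The plan is to prove two things: that $\cM$ is invariant under multiplication by every $\ps \in \Hinf(\Gm)$, and that $M = \cM \cap \Linf(\Gm,\om)$. The invariance is the easy part. Since $\ps \in \Hinf(\Gm) \subseteq \Linf(\Gm,\om)$, monotonicity and homogeneity of the extended gauge norm give $\al(\ps w) \le \supnorm{\ps}\,\al(w)$ for every $w \in \Lal(\Gm,\om)$, so multiplication by $\ps$ is $\al$-continuous. Hence if $f \in \cM$ is the $\al$-limit of a sequence $f_n \in M$, then $\ps f_n \in M$ and $\al(\ps f_n - \ps f) \le \supnorm{\ps}\,\al(f_n - f) \to 0$, whence $\ps f \in \cM$. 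For the identity $M = \cM \cap \Linf(\Gm,\om)$ the inclusion $\subseteq$ is immediate from $M \subseteq \cM$ and $M \subseteq \Linf(\Gm,\om)$; the content is the reverse inclusion.

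So let $f \in \cM \cap \Linf(\Gm,\om)$ with $\supnorm{f} \le R$, and choose $f_n \in M$ with $\al(f_n - f) \to 0$. Since $\al$ is dominating, $\onenorm{f_n - f} \le \al(f_n - f) \to 0$, so after passing to a subsequence we may assume $f_n \to f$ $\om$-a.e. Each $f_n$ lies in $M \subseteq \Linf(\Gm,\om)$, but the $f_n$ need not be uniformly bounded, and the whole point is to replace them by a uniformly bounded sequence in $M$ that still converges to $f$. Once we have uniformly bounded $g_n \in M$ with $g_n \to f$ $\om$-a.e., dominated convergence against each $k \in \Lone(\Gm,\om)$ gives $g_n \to f$ weak*, and since $M$ is weak*-closed this yields $f \in M$.

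To manufacture such $g_n$ I would \emph{tame} the $f_n$ with outer multipliers. Set $c_n = 1/(1 + |f_n - f|)$; then $c_n \in \Linf(\Gm,\om)$ and $1/c_n = 1 + |f_n - f| \in \Linf(\Gm,\om) \subseteq \Lal(\Gm,\om)$, so Lemma~\ref{lem:Saito} supplies a decomposition $c_n = \ps_n h_n$ with $h_n \in \Hinf(\Gm)$ outer and $|\ps_n|$ constant on each component $\Gm_j$. Put $g_n = h_n f_n \in M$. Uniform boundedness comes from $|c_n f_n| = |f_n|/(1 + |f_n - f|) \le \max(R,1)$ (using $|f_n| \le R + |f_n - f|$) together with $|\ps_n| \to 1$: indeed $|\log|c_n|| = \log(1+|f_n-f|) \le |f_n-f|$ forces $\log|c_n| \to 0$ in $\Lone(\Gm,\om)$, so the periods of its harmonic extension (integrals of the normal derivative over loops $\gm_j$ lying in the interior of $\Om$, hence given by a bounded kernel) tend to $0$, and with them the harmonic unit $u_{0,n}$ produced by Lemma~\ref{lem:Per}, so that $|\ps_n| = e^{u_{0,n}}$ on each $\Gm_j$ tends to $1$. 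The same input gives $h_n \to 1$ in measure: writing $h_n = \exp(u_n - u_{0,n} + iv_n)$ as in Lemma~\ref{lem:Saito}, the real part $u_n - u_{0,n}$ has boundary values $\log|c_n| - u_{0,n} \to 0$ in $\Lone(\Gm,\om)$, while the conjugate part $v_n \to 0$ in measure by the weak-type $(1,1)$ bound for harmonic conjugation. Combining $h_n \to 1$ in measure with $f_n \to f$ $\om$-a.e.\ gives $g_n = h_n f_n \to f$ in measure; passing to a further $\om$-a.e.-convergent subsequence and invoking dominated convergence as above completes the proof.

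I expect the taming step to be the main obstacle. The approximants $f_n$ are each bounded but not uniformly so, and the difficulty is precisely to produce uniformly bounded substitutes that remain inside the $\Hinf(\Gm)$-invariant space $M$; the multiplicative correction of Lemma~\ref{lem:Saito} solves this. The delicate points are verifying that both the constant-modulus factor $\ps_n$ and the outer factor $h_n$ converge to $1$ in measure, and this is the only place where the multiple connectivity of $\Om$ enters, through the convergence of the periods to zero and the resulting control of the harmonic units.
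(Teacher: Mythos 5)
Your proof is correct, but it takes a genuinely different route from the paper's. You argue directly: you tame the approximating sequence $f_n\in M$ itself, applying Lemma~\ref{lem:Saito} to each $c_n=1/(1+\abs{f_n-f})$ to manufacture a uniformly bounded sequence $g_n=h_nf_n\in M$ converging to $f$ in measure, and then invoke weak* sequential closedness of $M$. The paper instead argues by duality: it takes a weak*-continuous functional annihilating $M$, represented by $F\in\Lone(\Gm,\om)$, applies Lemma~\ref{lem:Saito} \emph{once} to $b=1/(\abs{F}+1)$ to make $hF$ bounded, approximates $1/h$ by $h_\nu\in\Hinf(\Gm)$ in $\Lone$, and shows $\int_\Gm gF\domega=0$ for every $g\in\cM\cap\Linf(\Gm,\om)$, concluding by Hahn--Banach. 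The trade-off is clear. The paper's route needs no quantitative control of the Saito factors because $h$ is fixed; your route requires two additional analytic inputs that the paper never develops and that you would have to prove: (i) that the period vector of the harmonic extension of $\Lone(\Gm,\om)$ boundary data is controlled by the $\Lone$ norm (true, via interior gradient estimates for the kernel $\der\omega_w/\der\omega$ on the compact loops $\gm_j$, as you indicate), and (ii) the weak-type $(1,1)$ bound for the conjugate function on $\Om$ (true, by transferring to $\T$ via $C_\tau$ and Kolmogorov's theorem). With those two facts supplied, your argument goes through; it is more constructive but noticeably heavier than the paper's, which sidesteps all convergence questions about harmonic units and conjugates. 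One small economy either way: the multiplicative bound $\al(\ps w)\leq\supnorm{\ps}\al(w)$ you use for invariance is indeed valid for gauge norms (write any $h$ with $\supnorm{h}\leq 1$ as a convex combination of two unimodular functions) and is used implicitly by the paper as well.
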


\begin{proof}
That $\cM$ is invariant and $M \subset \cM \cap \Linf(\Gm,\om)$ are immediate. For the opposite inclusion we show that if $\ph$ is a weak* continuous linear functional on $\Linf(\Gm,\om)$ with $M$ in its kernel, then $\ph(g) = 0$ for every $g \in \cM \cap \Linf(\Gm,\om)$. By the Hahn-Banach theorem, this will imply that $\cM \cap \Linf(\Gm,\om) \subset M$. For this suppose $F \in \Lone(\Gm,\om)$ and $\int_{\Gm} fF \domega = 0$ for every $f \in M$. Apply Lemma~\ref{lem:Saito} with $b = 1/(\abs{F} + 1)$ to get an outer function $h \in \Hinf(\Gm)$ such that $1/h \in \Hone(\Gm)$ and a function $\ps$ that has constant modulus on each of the sets $\Gm_{j}$ for $0 \leq j \leq n$ satisfying $b = \ps h$. Since 
\begin{equation*}
\abs{hF} = \frac{1}{\abs{\ps}} \frac{\abs{F}}{\abs{F} + 1},
\end{equation*}
$hF \in \Linf(\Gm,\om)$.
There exists a sequence $\seq{h_{\nu}}$ in $\Hinf(\Gm)$ such that $\lim_{\nu\ra\infty}\onenorm{1/h - h_{\nu}} = 0$, and thus $\onenorm{F - h_{\nu}hF} = \onenorm{(1/h - h_{\nu})hF} \leq \onenorm{1/h - h_{\nu}} \supnorm{hF} \ra 0$ as $\nu\ra \infty$.

If $g_{\mu} \in M$, then $h_{\nu}h g_{\mu} \in M$, and it follows that $\int_{\Gm}  h_{\nu}hg_{\mu}F \domega = 0$. If $g \in \cM$, then there is a sequence $\seq{g_{\mu}}$ in $M$ such that $\al(g - g_{\mu}) \ra 0$ as $\mu \ra \infty$, which implies $\onenorm{g - g_{\mu}} \ra 0$, since $\al$ is dominating. Thus 
\begin{equation}
\Bigl| \int_{\Gm} g h_{\nu} h F \domega \Bigr| = \Bigl| \int_{\Gm} (g - g_{\mu}) h_{\nu} h F \domega \Bigr| \leq \onenorm{g - g_{\mu}} \supnorm{h_{\nu}hF} \ra 0
\end{equation}
as $\mu \ra \infty$, and consequently $\int_{\Gm} g h_{\nu} h F \domega = 0$ for every $\nu$.

Finally if in addition to $g \in \cM$ we also assume that $g \in \Linf(\Gm,\om)$, then we have that 
\begin{equation*}
\Bigl| \int_{\Gm} g F \domega \Bigr| = \Bigl| \int_{\Gm} g (F - h_{\nu} h F) \domega \Bigr| \leq \supnorm{g} \onenorm{F - h_{\nu} hF} \ra 0
\end{equation*}
as $\nu \ra \infty$. Thus $\int_{\Gm} g F \domega = 0$, which completes the proof.
\end{proof}

\begin{propns} \label{prop:clstrtowkstr}
Let $\cM$ be a closed subspace of $\Lal(\Gm,\om)$ that is invariant under multiplication by all members of $\Hinf(\Gm)$. If $M = \cM \cap \Linf(\Gm,\om)$, then $M$ is weak* closed and invariant and $\cM = M^{-\al}$.
\end{propns}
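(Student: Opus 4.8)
The plan is to prove this as a converse to Proposition~\ref{prop:clwkstrtostr}, exploiting the symmetry of the two statements. Given a closed subspace $\cM$ of $\Lal(\Gm,\om)$ that is $\Hinf(\Gm)$-invariant, set $M = \cM \cap \Linf(\Gm,\om)$. Invariance of $M$ is immediate, since $\cM$ is invariant and $\Linf(\Gm,\om)$ is a module over $\Hinf(\Gm)$. The two substantive claims are that $M$ is weak* closed and that $\cM = M^{-\al}$, the $\al$-norm closure of $M$.

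First I would establish that $M$ is weak* closed. By the Krein-Smulian theorem it suffices to show that $M \cap \bB$ is weak* closed, where $\bB$ is the unit ball of $\Linf(\Gm,\om)$. The key leverage is Lemma~\ref{lem:ballLinf}: on $\bB$ the weak* topology, the $\al$-topology, the $\twonorm{\cdot}$-topology, and the topology of convergence in measure all coincide. So if $(g_\nu)$ is a net in $M \cap \bB$ converging weak* to some $g \in \bB$, then $g_\nu \ra g$ in the $\al$-topology as well; since $\cM$ is $\al$-closed and each $g_\nu \in M \subset \cM$, we get $g \in \cM$, and clearly $g \in \Linf(\Gm,\om)$, so $g \in M \cap \bB$. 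Hence $M \cap \bB$ is weak* closed, and Krein-Smulian gives that $M$ itself is weak* closed.

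Next I would prove $\cM = M^{-\al}$. The inclusion $M^{-\al} \subset \cM$ is immediate because $M \subset \cM$ and $\cM$ is $\al$-closed. For the reverse inclusion $\cM \subset M^{-\al}$, the natural route is to invoke Proposition~\ref{prop:clwkstrtostr} with the now-established weak* closed invariant subspace $M$: that proposition produces a subspace, namely the $\al$-closure of $M$, call it $\cM_0 = M^{-\al}$, which is invariant and satisfies $M = \cM_0 \cap \Linf(\Gm,\om)$. So both $\cM$ and $\cM_0$ are $\al$-closed invariant subspaces whose intersections with $\Linf(\Gm,\om)$ equal $M$. Since $\cM_0 = M^{-\al} \subset \cM$, what remains is to show $\cM \subset \cM_0$. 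The essential point is that $M = \cM \cap \Linf(\Gm,\om)$ is $\al$-dense in $\cM$; granting this, $\cM = M^{-\al} = \cM_0$ follows at once.

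The main obstacle is precisely the $\al$-density of $M$ in $\cM$, i.e.\ showing that bounded functions in $\cM$ suffice to approximate an arbitrary $g \in \cM$ in $\al$-norm. I would attack this by truncation: given $g \in \cM$, I would like to produce bounded elements of $\cM$ approximating $g$. The difficulty is that naive truncation of $g$ need not stay inside the invariant subspace $\cM$. The remedy, following the first author's technique in \cite{Chen2}, is again Lemma~\ref{lem:Saito}: by writing a suitable bounded function with $\al$-integrable reciprocal in the form $\ps h$ with $h$ outer in $\Hinf(\Gm)$ and $1/h \in \Hal(\Gm)$, one multiplies $g$ by bounded approximants $h_\nu \in \Hinf(\Gm)$ of $1/h$ to land back in $M$ (since $\cM$ is $\Hinf(\Gm)$-invariant and $h_\nu h g$ is bounded), while controlling the $\al$-error via the continuity and dominating properties of the norm together with Lemma~\ref{lem:ballLinf}. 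This is the same mechanism used in the proof of Proposition~\ref{prop:clwkstrtostr}, and carrying it out in the $\al$-norm rather than the one-norm, using the coincidence of topologies on bounded sets, is the crux of the argument.
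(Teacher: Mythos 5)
Your overall route is the paper's: weak* closedness of $M$ via Krein--\v{S}mulian together with Lemma~\ref{lem:ballLinf}, and the reverse inclusion $\cM \subset M^{-\al}$ by applying Lemma~\ref{lem:Saito} to $b = 1/(\abs{f}+1)$, so that $hf$ is bounded, lies in $M$, and $\al(f - h_{\nu}hf) \leq \supnorm{hf}\,\al(1/h - h_{\nu}) \ra 0$. That second half of your argument is correct and is exactly the paper's (your detour through Proposition~\ref{prop:clwkstrtostr} to compare $\cM$ with $M^{-\al}$ is harmless but unnecessary: once $M$ is shown to be $\al$-dense in $\cM$, both inclusions are immediate).

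There is, however, a genuine flaw in the weak*-closedness step as you wrote it. You add the weak* topology to the list of topologies that Lemma~\ref{lem:ballLinf} says coincide on the ball $\bB$, and then argue that a net in $M \cap \bB$ converging weak* must converge in the $\al$-topology. That coincidence is false: on any nonatomic probability space there are unimodular (e.g.\ Rademacher-type $\pm 1$-valued) functions $g_{n}$ with $g_{n} \ra 0$ weak*, yet $\al(g_{n}) = \al(\abs{g_{n}}) = \al(1) = 1$, so $g_{n} \not\ra 0$ in $\al$ (nor in measure). Hence your net argument breaks at the step ``$g_{\nu} \ra g$ in the $\al$-topology as well.'' The repair, which is presumably what ``as in \cite{Chen2}'' refers to, uses convexity: $M \cap \bB = \cM \cap \bB$ is $\al$-closed, hence by Lemma~\ref{lem:ballLinf} it is $\twonorm{\cdot}$-closed; being convex, it is weakly closed in $\Ltwo(\Gm,\om)$; and since a net in $\bB$ converging weak* in $\Linf(\Gm,\om)$ converges weakly in $\Ltwo(\Gm,\om)$ (test against $h \in \Ltwo \subset \Lone$), the set $M \cap \bB$ is weak* closed, so Krein--\v{S}mulian applies. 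The paper's introduction does informally assert the coincidence of the weak* topology with the others on the ball, but that is not what Lemma~\ref{lem:ballLinf} states and is not literally correct; convexity is what makes the step work.
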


\begin{proof}
That $M$ is weak* closed follows from the Krein-\v{S}mulian theorem and Lemma~\ref{lem:ballLinf} as in \cite{Chen2}. Invariance is immediate.

It is clear that $M^{-\al} \subset \cM$. Consider $f \in \cM$, and apply Lemma~\ref{lem:Saito} to $b = 1/(\abs{f} + 1)$, thereby producing a function $\ps$ with $\om$-a.e.\ constant modulus on each component of $\Gm$ and an outer function $h \in \Hinf(\Gm)$ with $1/h \in \Hal(\Gm)$ such that $b = \ps h$. There exists a sequence $\seq{h_{\nu}}$ in $\Hinf(\Gm)$ such that $\al(1/h - h_{\nu}) \ra 0$ as $\nu \ra \infty$. Since $\abs{hf} = \abs{\ps} \abs{f}/(\abs{f} + 1)$, it follows that $hf \in \cM$ and $hf$ is bounded, and hence $hf \in M$. The same is true of each $h_{\nu}hf$, and $\al(f - h_{\nu}hf) \leq \al(1/h - h_{\nu}) \supnorm{hf} \ra 0$ as $\nu \ra \infty$.  Therefore $f \in M^{-\al}$, and the proof is complete\end{proof}

With Propositions~\ref{prop:clwkstrtostr} and \ref{prop:clstrtowkstr} in hand we can now prove the principal result, the Beurling, Helson-Lowdenslager theorem for a space with a continuous, dominating, normalized, gauge norm on a multiply connected domain. As mentioned in the introduction, the last statement contains Royden's version of Beurling's theorem in \cite[Theorem 1]{R}.

\begin{thmns} \label{thm:BHLal}
Let $\cM$ be a closed subspace of $\Lal(\Gm,\om)$ that is invariant under $M_{\ps}$ for every $\ps \in \Hinf(\Gm)$.
Then either 
\romannumbering
\begin{enumerate}
\item 
$\cM = \chi_{\E}\Lal(\Gm,\om)$ for some measurable subset $\E$ of $\Gm$, 
or 
\item 
$\cM = \ph \Hal(\Gm)$ for some $\ph \in \Linf(\Gm,\om)$ such that $\abs{\ph}$ is constant on each of the components of $\Gm$.
\end{enumerate}
The result is also true in the case where $\al$ is the essential supremum norm when $\cM$ is weak* closed. When $\cM \subset \Hal(\Gm)$ case (ii) holds and the function $\ph$ is a Royden inner function.

\end{thmns}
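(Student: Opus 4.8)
The plan is to leverage the two preceding propositions, which together establish an inclusion-preserving bijection between the weak*-closed $\Hinf(\Gm)$-invariant subspaces $M$ of $\Linf(\Gm,\om)$ and the $\al$-closed $\Hinf(\Gm)$-invariant subspaces $\cM$ of $\Lal(\Gm,\om)$, given by $\cM = M^{-\al}$ and $M = \cM\cap\Linf(\Gm,\om)$. Because of this correspondence it suffices to classify the weak*-closed invariant subspaces of $\Linf(\Gm,\om)$ and then transport the answer by taking $\al$-closures. The key observation reducing the weak* problem to the already-proved $\Ltwo$ theorem is that $\twonorm{\cdot}$ is itself a member of $\fN$: it is a normalized gauge norm, it is continuous since $\twonorm{\chi_\E} = \om(\E)^{1/2}$, and it is dominating by the Cauchy--Schwarz inequality for the probability measure $\om$. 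Thus $\Lal(\Gm,\om) = \Ltwo(\Gm,\om)$ when $\al = \twonorm{\cdot}$, and Propositions~\ref{prop:clwkstrtostr} and \ref{prop:clstrtowkstr} apply verbatim in that case.

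First I would prove the weak* statement, which is also the $\al = \infty$ assertion. Let $M$ be weak*-closed and invariant, and let $\cM_2$ be its closure in $\Ltwo(\Gm,\om)$. By Proposition~\ref{prop:clwkstrtostr} with $\al = \twonorm{\cdot}$, the space $\cM_2$ is invariant and $M = \cM_2\cap\Linf(\Gm,\om)$. Theorem~\ref{thm:BHL} then gives two possibilities for $\cM_2$. If $\cM_2 = \chi_\E\Ltwo(\Gm,\om)$, then intersecting with $\Linf(\Gm,\om)$ yields $M = \chi_\E\Linf(\Gm,\om)$. If $\cM_2 = \ph\Htwo(\Gm)$, then $\abs{\ph}$ is a nonzero constant on each component of $\Gm$: in the proof of Theorem~\ref{thm:BHL} the factor $\ph\of\tau$ equals $q/F$ with $q$ unimodular and $F$ invertible outer, so $\ph$ is bounded and bounded below, hence invertible in $\Linf(\Gm,\om)$. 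Consequently $g = (\ph g)/\ph \in \Hinf(\Gm)$ whenever $g\in\Htwo(\Gm)$ and $\ph g \in \Linf(\Gm,\om)$, and therefore $M = \ph\Htwo(\Gm)\cap\Linf(\Gm,\om) = \ph\Hinf(\Gm)$.

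For a general $\al\in\fN$ and $\cM$ as in the statement, Proposition~\ref{prop:clstrtowkstr} shows $M = \cM\cap\Linf(\Gm,\om)$ is weak*-closed and invariant with $\cM = M^{-\al}$, so I would feed the two cases just obtained into this formula. In the first case multiplication by $\chi_\E$ is an $\al$-contraction, so $\chi_\E\Linf(\Gm,\om)$ is $\al$-dense in the $\al$-closed space $\chi_\E\Lal(\Gm,\om)$, giving $\cM = \chi_\E\Lal(\Gm,\om)$. In the second case the invertibility of $\ph$ makes $M_\ph$ an $\al$-homeomorphism of $\Lal(\Gm,\om)$, so $\cM = \ph\,\overline{\Hinf(\Gm)}^{\,\al} = \ph\Hal(\Gm)$. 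Finally, if $\cM\subset\Hal(\Gm)\subset\Hone(\Gm)$ is nontrivial, case (i) is impossible: since $1\in\Lal(\Gm,\om)$ it would force $\chi_\E\in\Hone(\Gm)\subset\Nev(\Gm)$, and a characteristic function vanishing on a set of positive measure must vanish $\om$-a.e., so $\E$ is null or co-null and $\cM$ is trivial or all of $\Lal(\Gm,\om)$. Hence case (ii) holds, and since $1\in\Hal(\Gm)$ we get $\ph\in\cM\subset\Hal(\Gm)$; being analytic with constant modulus on each $\Gm_j$, $\ph$ is a Royden inner function.

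The main obstacle is the passage through the $\Ltwo$-closure: one must be sure that taking the $\Ltwo$-closure of the weak*-closed $M$ and then reintersecting with $\Linf(\Gm,\om)$ returns exactly $M$, and this is precisely what is bought by recognizing $\twonorm{\cdot}\in\fN$ and invoking Propositions~\ref{prop:clwkstrtostr} and \ref{prop:clstrtowkstr}. The remaining delicate point is the identification $\ph\Htwo(\Gm)\cap\Linf(\Gm,\om) = \ph\Hinf(\Gm)$, which rests on $\ph$ being bounded below; the dichotomy of Theorem~\ref{thm:BHL} guarantees this in case (ii), after which all the $\al$-closure computations are routine continuity and gauge-norm estimates.
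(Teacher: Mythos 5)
Your proposal is correct and follows essentially the same route as the paper: reduce the general $\al$ case to the weak* case via Proposition~\ref{prop:clstrtowkstr}, reduce the weak* case to Theorem~\ref{thm:BHL} via Proposition~\ref{prop:clwkstrtostr} applied with $\al = \twonorm{\cdot}$, and then transport the two alternatives through the closure/intersection correspondence. The only difference is that you spell out details the paper leaves implicit (that $\twonorm{\cdot}\in\fN$, that $\ph$ is invertible in $\Linf(\Gm,\om)$ so that $\ph\Htwo(\Gm)\cap\Linf(\Gm,\om)=\ph\Hinf(\Gm)$, and the final assertion about $\cM\subset\Hal(\Gm)$), all of which are handled correctly.
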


\begin{proof}
The case of $\Ltwo(\Gm,\om)$ was handled in Theorem~\ref{thm:BHL}. Suppose $M$ is a weak* closed subspace of $\Linf(\Gm,\om)$ that is invariant under $M_{\ps}$ for every $\ps \in \Hinf(\Gm)$, and let $\cM$ be the closure of $M$ in $\Ltwo(\Gm,\om)$. The preceding case then applies to $\cM$, and Proposition~\ref{prop:clwkstrtostr} implies that $M$ is obtained by intersecting $\cM$ with $\Linf(\Gm,\om)$. Since the intersection of $\chi_{\E}\Ltwo(\Gm,\om)$ with $\Linf(\Gm,\om)$ is $\chi_{\E}\Linf(\Gm,\om)$ and the intersection of $\ph\Htwo(\Gm,\om)$ with $\Linf(\Gm,\om)$ is $\ph\Hinf(\Gm,\om)$, this case is proved.

Next let $\cM$ be a closed subspace of $\Lal(\Gm,\om)$ for $\al \in \fN$. By Proposition~\ref{prop:clstrtowkstr}, if $M = \cM \cap \Linf(\Gm,\om)$, then $M$ is weak* closed and invariant under each $M_{\ps}$ with $\ps \in \Hinf(\Gm)$. The preceding case now implies that either $M = \chi_{\E}\Linf(\Gm,\om)$ or $M = \ph \Hinf(\Gm)$. The closure of $M$ in the $\al$ topology is $\cM$, by Proposition~\ref{prop:clstrtowkstr}, the $\al$ closure of $\chi_{\E}\Linf(\Gm,\om)$ is $\chi_{\E}\Lal(\Gm,\om)$, and the $\al$ closure of $\ph \Hinf(\Gm)$ is $\ph \Hal(\Gm)$. 

The final assertion is clear and thus the proof is complete.
\end{proof}

\section{Inner Outer Factorization} \label{sec:inout}


As mentioned previously, the Royden definition makes a function inner if it is in $\Hinf(\Om)$ and its boundary function has $\om$-a.e.\ constant absolute values on each connected component of $\Gm$. Let $I(\Om)$ be the multiplicative semigroup of inner functions on $\Om$ and $I\inv(\Om)$ the subgroup of invertible ones. As usual, 
$I(\Gm)$, $I\inv(\Gm)$ will be the sets of their boundary functions on $\Gm$.
The Royden definition of inner function allows one to describe all of the invariant subspaces of the $\Hinf(\Gm)$ multiplication operators on $\Hal(\Gm)$, including $\set{0}$, very simply. They are the subspaces of the form $\ph \Hal(\Gm)$, where $\ph$ is inner, but the correspondence is not one-to-one. It is easy to disregard this phenomenon on the disk since if two inner functions produce the same invariant subspace of $\Hal(\T)$, then they differ by at most a multiplicative constant of modulus one. 

The multiply connected case is more complicated than that of the disk. A minor difference is that with Royden's definition the constant 0 function is inner. Also, a nonzero multiple of a non zero inner function is inner and gives rise to the same invariant subspace. A modification of this situation, making it a bit more in line with the situation on the unit disk, can be made by defining a \emph{normalized inner function} as one whose absolute boundary values on the outer boundary $\Gm_{0}$ of $\Om$ are one $\om$-a.e., and from here on inner functions on $\Om$ other than the trivial one 0 will be assumed to have this normalization. Thus  a Royden inner function, or as we will say from here on, simply \emph{inner} function, on $\Om$ will be either the constant 0 function or a bounded analytic function $\ph$ on $\Om$ such that $\abs{\ph}$ has boundary values $\om$-a.e.\ equal to 1 on the outer boundary $\Gm_{0}$ and $\om$-a.e.\ (necessarily nonzero) constant values on each of the remaining connected boundary components $\Gm_{j}$, $1 \leq j \leq n$. Equivalently, a nonzero inner function $\ph$ is a member of $\Hinf(\Gm)$ that satisfies $\log\abs{\ph} = \sum_{j=1}^{n} a_{j} \chi_{\Gm_{j}}$ for some real constants $a_{j}$, $1\leq j \leq n$.

But a more fundamental difference is that there are nonconstant invertible inner functions. For example, on a zero centered annulus with outer radius 1 the functions $w^{k}$ with $k \in \Z$ are all normalized inner and invertible. Kuratowski in \cite{K} (see also \cite[page 211]{SZ}) studied functions without zeros on $\Om$. He showed that if points $a_{1}, a_{2}, \ldots, a_{n}$ are chosen in the complement of $\Ombar$ with one of them in each bounded component of that complement, and if 
\begin{equation} \label{eqn:frational}
f(w) = (w - a_{1})^{k_{1}}\cdots (w - a_{n})^{k_{n}},  
\end{equation}
where  $k_{1}, k_{2}, \ldots, k_{n}$ are integers, then every analytic function $\ph$ on $\Om$ having no zeros is homotopic to a function of the form $f$. This means that $\ph(w) = f(w)e^{g(w)}$, where $g$ is analytic on $\Om$. Moreover the $k_{j}$'s are uniquely determined by $\ph$, and $g$ is uniquely determined up to the addition of an integer multiple of $2\pi i$. Moreover, $\ph$ is bounded if and only if $\Real g$ is bounded above, and $\ph$ is invertible in $\Hinf(\Om)$ if and only if $\Real g$ is bounded both above and below. It follows that $\ph$ is an invertible inner function precisely when, in addition, there exists a harmonic unit $u$ such that $u = \Real g + \log\abs{f}$.

\begin{propns} \label{prop:invinner}
Let $\ph$ be a nonzero inner function in $\Hinf(\Gm)$. Then the following are equivalent:
\romannumbering
\begin{enumerate}
\item 
$\ph$ is invertible,
\item 
$\ph$ is outer,
\item 
for some $\al \in \fN_{\infty}$, $\ph \Hal(\Gm) = \Hal(\Gm)$,
\item 
for all $\al \in \fN_{\infty}$, $\ph \Hal(\Gm) = \Hal(\Gm)$.
\item
there exist integers $k_{1}, k_{2},\ldots , k_{n}$ inducing $f$ as in \eqref{eqn:frational} and there exists $g$ in $H(\Om)$ with bounded real part such that $\log\abs{f} + \Real g$ is a harmonic unit and $\ph = f e^{g}.$
\end{enumerate}
\end{propns}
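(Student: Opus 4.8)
The plan is to organize the five conditions into a single cycle plus one two–way equivalence: I would prove $(\mathrm{i})\Leftrightarrow(\mathrm{ii})$ directly from Jensen's inequality, then close the loop $(\mathrm{i})\Rightarrow(\mathrm{iv})\Rightarrow(\mathrm{iii})\Rightarrow(\mathrm{ii})$ among the analytic and operator-theoretic statements, and finally handle the homotopy description by proving $(\mathrm{i})\Leftrightarrow(\mathrm{v})$ on its own. The only external inputs needed are Jensen's inequality for $\Hone(\Gm)$ (the remark following Lemma~\ref{lem:outer}), the description of harmonic units from Section~\ref{subsec:eigen}, and the factorization of zero-free analytic functions recalled from Kuratowski just before the statement.

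For $(\mathrm{i})\Rightarrow(\mathrm{ii})$ I would note that invertibility puts both $\ph$ and $1/\ph$ in $\Hinf(\Gm)\subset\Hone(\Gm)$, so Jensen applied to each gives $\log\abs{\ph(w)}\le\int_\Gm\log\abs{\ph}\domega_w$ together with the reverse inequality, forcing equality and hence outerness. For $(\mathrm{ii})\Rightarrow(\mathrm{i})$ I would use that $\ph$ inner means $\log\abs{\ph}=\sum_{j=1}^n a_j\chi_{\Gm_j}$ on $\Gm$, so the outer identity reads $\log\abs{\ph(w)}=\sum_j a_j h_j(w)=u_{\vec a}(w)$, a harmonic unit that is bounded above and below; this forces $\ph$ to be zero-free with $1/\ph$ bounded and analytic, i.e.\ invertible. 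For the operator cycle, $(\mathrm{i})\Rightarrow(\mathrm{iv})$ follows because when $1/\ph\in\Hinf(\Gm)$ the operators $M_\ph$ and $M_{1/\ph}$ are mutually inverse bounded maps of $\Lal(\Gm,\om)$ (using $\al(\ph f)\le\supnorm{\ph}\,\al(f)$), so $M_\ph$ is a homeomorphism carrying the $\al$-closure of $\Hinf(\Gm)$ onto itself, giving $\ph\Hal(\Gm)=\Hal(\Gm)$ for every $\al\in\fN_\infty$ (the weak* case being immediate from $\ph\Hinf(\Gm)=\Hinf(\Gm)$). The step $(\mathrm{iv})\Rightarrow(\mathrm{iii})$ is trivial, and for $(\mathrm{iii})\Rightarrow(\mathrm{ii})$ I would observe that $1\in\Hal(\Gm)=\ph\Hal(\Gm)$ yields $1/\ph\in\Hal(\Gm)\subset\Hone(\Gm)$, after which Jensen applied to both $\ph$ and $1/\ph$ again delivers outerness.

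For $(\mathrm{i})\Leftrightarrow(\mathrm{v})$ I would invoke Kuratowski directly. If $\ph$ is invertible it is zero-free, so his theorem provides integers $k_1,\dots,k_n$, the function $f$ of \eqref{eqn:frational}, and $g\in H(\Om)$ with $\ph=fe^g$, and invertibility is precisely the condition that $\Real g$ be bounded above and below; since $(\mathrm{i})\Rightarrow(\mathrm{ii})$ makes $\ph$ outer, $\log\abs f+\Real g=\log\abs{\ph}=u_{\vec a}$ is a harmonic unit, which is exactly $(\mathrm{v})$. Conversely, given the data of $(\mathrm{v})$, the bound on $\Real g$ together with the fact that $\abs f$ is bounded above and below on $\Ombar$ (its zeros and poles lie off $\Ombar$) shows $\abs{\ph}=\abs f\,e^{\Real g}$ is bounded above and below, so $\ph$ and $1/\ph$ are bounded and analytic, giving $(\mathrm{i})$.

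The most delicate point I expect is $(\mathrm{iii})\Rightarrow(\mathrm{ii})$: one must extract the inverse from the single identity $\ph\Hal(\Gm)=\Hal(\Gm)$ and then check that mere membership of $1/\ph$ in $\Hone(\Gm)$ is enough to run Jensen in both directions; ensuring the multiplication-operator argument for $(\mathrm{i})\Rightarrow(\mathrm{iv})$ is uniform across all of $\fN_\infty$, including the weak* $\Linf$ case, also requires a little care. By comparison the homotopy equivalence $(\mathrm{i})\Leftrightarrow(\mathrm{v})$ should be essentially bookkeeping once the Kuratowski factorization and the outer identity $\log\abs{\ph}=u_{\vec a}$ are in place.
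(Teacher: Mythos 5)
Your proposal is correct, but it takes a genuinely different route through the equivalences than the paper does. The paper disposes of (i) $\Leftrightarrow$ (iii) $\Leftrightarrow$ (iv) as ``easy to see'' (essentially your multiplication-operator argument), and then proves (iv) $\Rightarrow$ (ii) and (ii) $\Rightarrow$ (i) by invoking Theorem~\ref{thm:outercyclicH2}: invertibility makes $\ph$ cyclic for $\Htwo(\Gm)$, hence outer, and conversely outerness makes $\ph\Htwo(\Gm)$ dense, while innerness makes $\ph$ bounded below so that $\ph\Htwo(\Gm)$ is closed, hence all of $\Htwo(\Gm)$. Your proof replaces this reliance on the cyclicity theorem (and hence on the full Beurling machinery for $\Htwo(\Gm)$) with a two-sided application of Jensen's inequality to $\ph$ and $1/\ph$, plus the observation that for an inner $\ph$ the outer identity reads $\log\abs{\ph} = u_{\vec a}$, a bounded harmonic unit, forcing $\ph$ to be bounded below. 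This is more elementary and self-contained; what the paper's route buys is brevity given that Theorem~\ref{thm:outercyclicH2} is already in hand, and it exhibits the proposition as a corollary of the invariant-subspace theory rather than of function-theoretic estimates. One small point worth making explicit in your (iii) $\Rightarrow$ (ii) step: from $\ph g = 1$ $\om$-a.e.\ on $\Gm$ with $g \in \Hal(\Gm) \subset \Hone(\Gm)$, you should identify the analytic extension of $g$ with the pointwise reciprocal $1/\ph$ on $\Om$ before applying Jensen; this follows from the Nevanlinna-class uniqueness property recorded in Section~\ref{sec:prelim} (a member of $\Nev(\Gm)$ vanishing on a set of positive measure vanishes identically, applied to $\ph g - 1$). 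The treatment of (i) $\Leftrightarrow$ (v) matches the paper, which simply cites the Kuratowski discussion; your derivation that $\log\abs{f} + \Real g$ is a harmonic unit via outerness is a correct way to fill in that discussion.
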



\begin{proof}
It is easy to see that (i) is equivalent to each of (iii) and (iv). Also (iv) implies that $\ph$ is cyclic for $\Htwo(\Gm)$ which, by Theorem~\ref{thm:outercyclicH2}, implies $\ph$ is outer. Finally, if $\ph$ is outer, then Theorem~\ref{thm:outercyclicH2} implies $\ph\Htwo(\Gm)$ is dense in $\Htwo(\Gm)$. Since $\ph$ is inner, it is bounded below on $\Gm$, and thus $\ph\Htwo(\Gm)$ is closed, hence all of $\Htwo(\Gm)$. Therefore $\ph$ is invertible. Equivalence of (i) and (v) follows from the discussion preceding the statement of the proposition.
\end{proof}

We will call two inner functions \emph{equivalent} if they differ by an invertible factor, which is necessarily inner. Modulo equivalence, $I(\Om)$ has a lattice structure given by divisibility. For $\ph, \ps \in I(\Om)$ we say $\ph$ \emph{divides} $\ps$, written 
\begin{equation*}
\ph \mid \ps,
\end{equation*}
if and only if there is a $\rho \in I(\Om)$ such that $\ps = \rho \ph$. Since $I(\Om)$ is cancellative, $\rho$ is unique up to equivalence. Also, $\phi$ and $\psi$ are equivalent if each divides the other.

\begin{thmns} \label{thm:uniqueness}
Suppose $\ph$ and $\ps$ are inner functions on $\Gm$. Each of the following implies the others.
\romannumbering
\begin{enumerate}
\item 
$\ph$ and $\ps$ are equivalent,
\item 
$\ph \Hone(\Gm) = \ps \Hone(\Gm)$,
\item 
$\ph \Hinf(\Gm) = \ps \Hinf(\Gm)$,
\item 
for some $\al \in \fN_{\infty}$, $\ph \Hal(\Gm) = \ps \Hal(\Gm)$,
\item 
for every $\al \in \fN_{\infty}$, $\ph \Hal(\Gm) = \ps \Hal(\Gm)$.
\end{enumerate}
\end{thmns}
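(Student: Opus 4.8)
The plan is to establish the cycle (i) $\Rightarrow$ (v) $\Rightarrow$ (iv) $\Rightarrow$ (i); the remaining statements then fall into place because (ii) and (iii) are sandwiched between (v) and (iv). First I would dispose of the degenerate case: if $\ph = 0$ then $\ph\Hal(\Gm) = \set{0}$, and since $1 \in \Hinf(\Gm) \subset \Hal(\Gm)$, any of the product identities forces $\ps = \ps\cdot 1 = 0$, so both are $0$ and equivalent; thus I may assume $\ph$ and $\ps$ are nonzero. For (i) $\Rightarrow$ (v), write $\ph = \rho\ps$ with $\rho$ an invertible inner function. By Proposition~\ref{prop:invinner}, $\rho\Hal(\Gm) = \Hal(\Gm)$ for every $\al \in \fN_{\infty}$, and since multiplication is commutative, $\ph\Hal(\Gm) = \rho\ps\Hal(\Gm) = \ps\bigl(\rho\Hal(\Gm)\bigr) = \ps\Hal(\Gm)$. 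The passage (v) $\Rightarrow$ (iv) is trivial, and (v) also yields (ii) and (iii) on specializing $\al$ to the $\onenorm{\cdot}$-norm (which lies in $\fN$, so that $\Hal(\Gm) = \Hone(\Gm)$) and to the essential supremum norm. Conversely each of (ii) and (iii) is the statement (iv) for a particular member of $\fN_{\infty}$, so everything reduces to proving (iv) $\Rightarrow$ (i).

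For (iv) $\Rightarrow$ (i), suppose $\ph\Hal(\Gm) = \ps\Hal(\Gm)$ for some $\al \in \fN_{\infty}$. Since $\ph \in \ps\Hal(\Gm)$ and $\ps \in \ph\Hal(\Gm)$, there are $g, h \in \Hal(\Gm)$ with $\ph = \ps g$ and $\ps = \ph h$. Substituting gives $\ph = \ph hg$, and because $\ph$ is a nonzero member of $\Nev(\Gm)$ it is nonzero $\om$-a.e.\ (the vanishing of a Nevanlinna function on a set of positive measure forces it to vanish identically), so $hg = 1$ $\om$-a.e. Hence $g = \ph/\ps$ with both $g$ and $1/g = h$ lying in $\Hal(\Gm) \subset \Hone(\Gm)$. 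Since $\abs{\ph}$ and $\abs{\ps}$ are each constant on every boundary component $\Gm_{j}$, the modulus $\abs{g} = \abs{\ph}/\abs{\ps}$ is constant on each $\Gm_{j}$, and in particular $\abs{g}$ is bounded.

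The crux is to upgrade $g$ from $\Hone(\Gm)$ to $\Hinf(\Gm)$. I would transfer to the disk: $g\of\tau \in \Hone(\T)$, and its boundary modulus is bounded, being constant on each set $\tau\inv(\Gm_{j})$. Factoring $g\of\tau$ into its Blaschke, singular, and outer parts, the two inner factors are unimodular on $\T$, so the outer factor $O$ satisfies $\log\abs{O(z)} = \intT P_{z}\log\abs{g\of\tau}\dm \leq \log\supnorm{g\of\tau}$ for $z \in \D$, whence $\abs{g\of\tau} \leq \supnorm{g\of\tau}$ throughout $\D$ and $g\of\tau \in \Hinf(\T)$; therefore $g \in \Hinf(\Gm)$. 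Applying the same argument to $h = 1/g$ gives $1/g \in \Hinf(\Gm)$. A function $g \in \Hinf(\Gm)$ whose reciprocal is again in $\Hinf(\Gm)$ and whose modulus is constant on each component of $\Gm$ is exactly an invertible inner function, so $\ph = \ps g$ exhibits $\ph$ and $\ps$ as equivalent, closing the cycle. The main obstacle is precisely this boundedness upgrade; once it is secured, the identification of $g$ as invertible inner and the bookkeeping of the implications are routine.
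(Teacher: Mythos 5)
Your proof is correct and follows essentially the same route as the paper's (which is only two sentences long): (i) implies the rest because invertible inner functions act as units on every $\Hal(\Gm)$, and conversely each of (ii)--(v) gives mutual divisibility, which forces equivalence. The one substantive detail you add --- upgrading the quotient $g = \ph/\ps$ from $\Hone(\Gm)$ with bounded, locally constant modulus to an invertible element of $\Hinf(\Gm)$ via the outer-factor estimate --- is precisely the step the paper leaves implicit, and you carry it out correctly.
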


\begin{proof}
The last four cases follow from the first because if $\rho$ is an invertible inner function, then $\rho \Hal(\Gm) = \Hal(\Gm)$ for all $\al$. In each of the last four cases $\ph$ is a multiple of $\ps$ and vice versa, which makes $\ph$ and $\ps$ equivalent.
\end{proof}

Suppose $\emp \ne S \subset I(\Om)$. We say that $\rho \in I(\Om)$ is a \emph{least common multiple} of $S$ if and only if $\rho$ is divisible by every element of $S$ and if $\sig \in I(\Om)$ is divisible by every element of $S$, then $\sig \mid \rho$. It is clear that if there is such a $\rho$, then it is unique modulo equivalence. Although uniqueness is only for equivalence classes, we write 
\begin{equation*}
\rho = \LCM(S)
\end{equation*} 
to denote that $\rho$ is a least common multiple of $S$.

Similarly we say that $\rho$ is a (unique up to equivalence) \emph{greatest common divisor} of $S$, denoted by $\rho = \GCD(S)$, if and only if $\rho$ divides every member of $S$ and every $\sig \in I(\Om)$ that divides every member of $S$ must also divide $\rho$. The proof of the following proposition follows immediately from Theorem~\ref{thm:BHLal} and the fact that $\bigcap_{\ps\in S} \ps\Hone(\Om)$ and $\clspan^{\Hone(\Om)}\bigl(\bigcup_{\ps\in S} \ps \Hone(\Om)\bigr)$ are always closed $\Hinf(\Om)$-invariant subspaces of $\Hone(\Om)$.

\begin{propns}
Suppose $\emp \ne S \subset I(\Om)$ and $\ph, \gm \in I(S)$. Then
\romannumbering
\begin{enumerate}
\item 
$\gm \mid \ph$ if and only if $\ph\Hone(\Om) \subset \gm \Hone(\Om)$,
\item 
$\gm$ and $\ph$ are equivalent if and only  if each divides the other,
\item 
$\gm = \LCM(S)$ if and only if $\gm \Hone(\Om) = \bigcap_{\ps \in S} \ps \Hone(\Om)$,
\item 
$\gm = \GCD(S)$ if and only if $\gm \Hone(\Om) = \clspan^{\Hone(\Om)}\bigl(\bigcup_{\ps \in S} \ps \Hone(\Om) \bigr)$,
\item 
$\LCM(S)$ always exists; $\GCD(S)$ exists if and only if $\bigcap_{\ps \in S} \ps \Hone(\Om) \ne \set{0}$. In particular, if $S$ is finite, then $\GCD(S)$ exists.
\end{enumerate}
\end{propns}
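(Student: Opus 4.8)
The proposition is a dictionary between the divisibility order on $I(\Om)$ and inclusion of the associated invariant subspaces, so the plan is to prove (i) first and then read off (ii)--(v). For the forward half of (i), if $\gm\mid\ph$ then $\ph=\rho\gm$ for some inner $\rho$, and since $\rho\in\Hinf(\Om)$ multiplies $\Hone(\Om)$ into itself, $\ph\Hone(\Om)=\rho\gm\Hone(\Om)\subset\gm\Hone(\Om)$. For the converse I would use $1\in\Hone(\Om)$ to write $\ph=\gm g$ with $g\in\Hone(\Om)$ and then show $g=\ph/\gm$ is itself inner: its boundary modulus is $\abs{\ph}/\abs{\gm}$, which is $1$ on $\Gm_0$ and a nonzero constant on each $\Gm_j$ because $\ph$ and $\gm$ are both inner. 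In particular $\abs{g}$ is bounded, so pulling back by $\tau$ reduces matters to the classical disk fact that a function in $\Hone(\T)$ with bounded boundary values lies in $\Hinf(\T)$; hence $g\in\Hinf(\Gm)$, and having constant modulus on each component it is inner, giving $\gm\mid\ph$.

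Part (ii) is then formal: if $\ph$ and $\gm$ divide each other, writing $\ph=\rho\gm$ and $\gm=\sig\ph$ and cancelling (using that $I(\Om)$ is cancellative) gives $\rho\sig=1$, so $\rho$ is invertible and $\ph,\gm$ are equivalent; the reverse implication is immediate from the definition of equivalence together with (i). For (iii) and (iv) the crucial input is the observation recorded before the statement, that $\bigcap_{\ps\in S}\ps\Hone(\Om)$ and $\clspan^{\Hone(\Om)}\bigl(\bigcup_{\ps\in S}\ps\Hone(\Om)\bigr)$ are closed $\Hinf(\Om)$-invariant subspaces of $\Hone(\Om)$. By the $\al=\onenorm{\cdot}$ case of Theorem~\ref{thm:BHLal} each such subspace equals $\tau\Hone(\Om)$ for an inner function $\tau$ (with the convention $\set{0}=0\cdot\Hone(\Om)$), and by Theorem~\ref{thm:uniqueness} two inner functions determine the same subspace of $\Hone(\Om)$ exactly when they are equivalent.

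I would then verify that the representing inner function has the right universal property via (i). For the intersection, $\tau\Hone(\Om)\subset\ps\Hone(\Om)$ gives $\ps\mid\tau$ for each $\ps\in S$, so $\tau$ is a common multiple, while any common multiple $\sig$ satisfies $\sig\Hone(\Om)\subset\bigcap_{\ps\in S}\ps\Hone(\Om)=\tau\Hone(\Om)$, hence $\tau\mid\sig$; thus $\tau=\LCM(S)$, and $\gm=\LCM(S)$ holds iff $\gm$ is equivalent to $\tau$ iff, by Theorem~\ref{thm:uniqueness}, $\gm\Hone(\Om)=\bigcap_{\ps\in S}\ps\Hone(\Om)$, which is (iii). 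Part (iv) is the mirror image, using that the closed span is the smallest closed invariant subspace containing every $\ps\Hone(\Om)$, so the inner function it represents is the greatest common divisor.

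Finally, (v) follows by tracking when these representing functions are nonzero. The closed span contains each nonzero $\ps\Hone(\Om)$, so it is a nonzero invariant subspace and the greatest common divisor it represents always exists; the least common multiple represented by the intersection, on the other hand, is a nonzero inner function precisely when $\bigcap_{\ps\in S}\ps\Hone(\Om)\ne\set{0}$, and when $S$ is finite the product $\prod_{\ps\in S}\ps$ is a nonzero common multiple lying in the intersection, forcing it to be nontrivial. The one genuinely analytic step is the converse half of (i): showing that the a priori merely $\Hone(\Om)$ quotient of two inner functions is again inner. This is the main obstacle, since it is where the multiply-connected geometry must be handled, controlling the boundary modulus component-by-component on $\Gm_0,\dots,\Gm_n$; once (i) is secured, (ii)--(v) are formal consequences of Theorems~\ref{thm:BHLal} and~\ref{thm:uniqueness}.
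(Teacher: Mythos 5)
Your proposal is correct and follows the route the paper itself indicates: the paper offers no detailed argument beyond the remark that $\bigcap_{\ps\in S}\ps\Hone(\Om)$ and $\clspan^{\Hone(\Om)}\bigl(\bigcup_{\ps\in S}\ps\Hone(\Om)\bigr)$ are closed $\Hinf(\Om)$-invariant subspaces to which Theorem~\ref{thm:BHLal} applies, and you supply exactly the missing details. Your treatment of the one analytic step, the converse of (i), is right: $g=\ph/\gm$ lies in $\Hone(\Om)$ with boundary modulus $\abs{\ph}/\abs{\gm}$ locally constant (and equal to $1$ on $\Gm_0$), and pulling back by $\tau$ reduces to the disk fact that an $\Hone(\T)$ function with essentially bounded boundary values is in $\Hinf(\T)$; one should only add the trivial separate treatment of the case $\gm=0$, where $\gm\Hone(\Om)=\set{0}$ forces $\ph=0$.

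One discrepancy deserves explicit mention. What you actually prove in (v) is that $\GCD(S)$ always exists (being represented by the closed span, which is nonzero as soon as $S$ contains a nonzero function), while $\LCM(S)$ is a nonzero inner function precisely when $\bigcap_{\ps\in S}\ps\Hone(\Om)\ne\set{0}$, with finiteness of $S$ guaranteeing this via the product $\prod_{\ps\in S}\ps$. That is the mathematically correct assertion, but it is not what item (v) of the proposition literally says: as printed, (v) interchanges the roles of $\LCM$ and $\GCD$. Since the greatest common divisor is governed by the closed span and not by the intersection, its existence cannot depend on whether the intersection is $\set{0}$; the printed statement appears to be a typographical slip (as does the displayed definition of least common multiple, where ``$\sig\mid\rho$'' should read ``$\rho\mid\sig$'', a correction you use implicitly). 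You should state explicitly that you are proving the corrected version rather than silently substituting it.
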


The inner-outer factorization of elements of $\Hone(\Gm)$, and hence of elements of every $\Hal(\Gm)$, could have been obtained earlier by transferring $f \in \Hone(\Gm)$ to $\Hone(\T)$ via $C_{\tau}$, but we will obtain both the factorization and the characterization of cyclic vectors as outer functions here as consequences of the Beurling theorem in this setting, Theorem~\ref{thm:BHLal}. Here a vector $f$ is cyclic for a space $\Hal(\Gm)$ means $\Hinf(\Gm)\cdot f$ is norm dense, respectively weak$\adj$ dense if $\al = \supnorm{\cdot}$, in $\Hal(\Gm)$. 

\begin{thmns}\label{thm:bnddcyclicouter}
Suppose $f \in \Hinf(\Gm)$. The following are equivalent:
\romannumbering
\begin{enumerate}
\item \label{item:outer}
$f$ is outer,
\item \label{item:cyctwo}
$f$ is cyclic for $\Htwo(\Gm)$,
\item \label{item:cycHinf}
$f$ is cyclic for $\Hinf(\Gm)$,
\item \label{item:cycallal}
$f$ is cyclic for all $\Hal(\Gm)$ with $\al \in \fN$,
\item \label{item:cycsomeal}
$f$ is cyclic for some $\Hal(\Gm)$ with $\al \in \fN$.
\end{enumerate}
\end{thmns}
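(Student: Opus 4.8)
The plan is to reduce all five conditions to a single property, namely invertibility of the inner factor of $f$, read off from the Beurling representation of the invariant subspace that $f$ generates. I may assume $f \neq 0$, since when $f = 0$ each of (i)--(v) plainly fails and there is nothing to prove. First I would let $M$ be the weak*-closure of $\Hinf(\Gm)f$ in $\Hinf(\Gm)$. Since $M \subset \Hinf(\Gm)$, the essential-supremum (weak*-closed) case of Theorem~\ref{thm:BHLal} places $M$ in alternative (ii), so that $M = \ph\Hinf(\Gm)$ for a Royden inner function $\ph$. Because $f \in M$, this simultaneously records the inner--outer factorization $f = \ph g$ with $g \in \Hinf(\Gm)$, and $\ph \neq 0$ as $f \neq 0$; being nonzero and inner, $\abs{\ph}$ equals a nonzero constant on each component $\Gm_{j}$ and is therefore bounded away from $0$ on $\Gm$.

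The key step, which I expect to be the main obstacle, is to verify that this one inner function $\ph$ controls the invariant subspace generated by $f$ in every norm at once:
\[
\cM_{\al} := \clspan^{\al}\set{\ps f : \ps \in \Hinf(\Gm)} = \ph\,\Hal(\Gm) \qquad (\al \in \fN).
\]
For the inclusion ``$\subseteq$'' I would note that $\Hinf(\Gm)f \subset \ph\Hinf(\Gm)$ and pass to $\al$-closures, invoking the identity $\clspan^{\al}\bigl(\ph\Hinf(\Gm)\bigr) = \ph\Hal(\Gm)$ already obtained in the proof of Theorem~\ref{thm:BHLal}. For ``$\supseteq$'', Proposition~\ref{prop:clstrtowkstr} shows that $\cM_{\al} \cap \Linf(\Gm,\om)$ is weak*-closed and $\Hinf(\Gm)$-invariant; as it contains $\Hinf(\Gm)f$ it must contain the weak*-closure $M = \ph\Hinf(\Gm)$, and a further $\al$-closure then yields $\ph\Hal(\Gm) \subseteq \cM_{\al}$. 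Since the $2$-norm is a continuous, dominating, normalized gauge norm, the case $\al = \twonorm{\cdot}$ specializes this identity to $\cM_{2} = \ph\Htwo(\Gm)$, while $M = \ph\Hinf(\Gm)$ already covers the weak*-closed case.

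With the representations $\cM_{\al} = \ph\Hal(\Gm)$ and $M = \ph\Hinf(\Gm)$ in hand, the theorem follows at once. Statements (ii), (iii), (iv), (v) assert respectively that $\ph\Htwo(\Gm) = \Htwo(\Gm)$, that $\ph\Hinf(\Gm) = \Hinf(\Gm)$, and that $\ph\Hal(\Gm) = \Hal(\Gm)$ for all, respectively some, $\al \in \fN$; by Proposition~\ref{prop:invinner} each of these is equivalent to the invertibility of $\ph$. Finally, because $f \in \Hinf(\Gm) \subset \Htwo(\Gm)$, Theorem~\ref{thm:outercyclicH2} gives that (i) is equivalent to (ii). Hence all five conditions are equivalent, each holding exactly when the inner factor $\ph$ is invertible.
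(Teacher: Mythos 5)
Your proof is correct, and it is organized differently from the paper's. The paper argues by a cycle of implications --- (i)$\Leftrightarrow$(ii) from Theorem~\ref{thm:outercyclicH2}, then (ii)$\Rightarrow$(iii)$\Rightarrow$(iv)$\Rightarrow$(v)$\Rightarrow$(iii) --- where each step transfers cyclicity between $\Htwo(\Gm)$, the weak* topology on $\Hinf(\Gm)$, and $\Hal(\Gm)$ using precisely the two Propositions~\ref{prop:clwkstrtostr} and \ref{prop:clstrtowkstr} that you invoke; it never names the inner function $\ph$ and makes no use of Proposition~\ref{prop:invinner}. You instead extract from the weak*-closed case of Theorem~\ref{thm:BHLal} a single inner $\ph$ with $M = \ph\Hinf(\Gm)$, establish the master identity $\cM_{\al} = \ph\Hal(\Gm)$ for every $\al$ (including $\al = \twonorm{\cdot}$), and let Proposition~\ref{prop:invinner} do the bookkeeping, so that each of (ii)--(v) becomes the single assertion that $\ph$ is invertible. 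The underlying machinery is the same, but your route buys a slightly stronger conclusion --- the cyclic subspace generated by $f$ is $\ph\Hal(\Gm)$ with the same $\ph$ for every norm, which is in effect the inner--outer factorization of $f$ that the paper only derives in a later corollary --- at the cost of leaning on Proposition~\ref{prop:invinner}, whose proof depends only on Theorem~\ref{thm:outercyclicH2}, so no circularity arises. Two small points you should make explicit: the identity $\clspan^{\al}\bigl(\ph\Hinf(\Gm)\bigr) = \ph\Hal(\Gm)$ that you cite from the proof of Theorem~\ref{thm:BHLal} rests on $\abs{\ph}$ being bounded above and below, which holds because $\ph \ne 0$ is Royden inner; and the $2$-norm must be checked to lie in $\fN$ (it is a continuous, dominating, normalized gauge norm) so that the $\al = \twonorm{\cdot}$ specialization and the appeal to Proposition~\ref{prop:invinner}(iii) are legitimate.
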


\begin{proof}
\eqref{item:outer} is equivalent to \eqref{item:cyctwo}. This is implied by Theorem~\ref{thm:outercyclicH2}.

\eqref{item:cyctwo} implies \eqref{item:cycHinf}. Suppose $f$ is cyclic for $\Htwo(\Gm)$ and $M = (\Hinf(\Gm)\cdot f)^{-\text{w}\adj}$, where the last superscript designates the weak$\adj$ closure. If $\cM = M^{-\twonorm{\cdot}}$, then $\cM$ is a $\twonorm{\cdot}$-closed invariant subspace of $\Htwo(\Gm)$, and Theorem~\ref{thm:BHL} implies $\cM = \ph\Htwo(\Gm)$ for some inner function $\ph$. Since $f$ is cyclic and belongs to $\cM$, $\Htwo(\Gm) \subset \cM$, and thus $\cM = \ph \Htwo(\Gm) = \Htwo(\Gm)$. Proposition~\ref{prop:clwkstrtostr} implies $M = \cM \cap \Linf(\Gm,\om) = \Hinf(\Gm)$, so $f$ is cyclic for $\Hinf(\Gm)$.

\eqref{item:cycHinf} implies \eqref{item:cycallal}. Suppose $f$ is cyclic for $\Hinf(\Gm)$ and $\al \in \fN$. Put $\cM = (\Hinf(\Gm) \cdot f)^{-\al}$ and $M = \cM \cap \Linf(\Gm,\om)$. By Proposition~\ref{prop:clstrtowkstr}, $M$ is weak$\adj$ closed and $M^{-\al} = \cM$. Since $f \in M$ and $f$ is cyclic for $\Hinf(\Gm)$, $M = \Hinf(\Gm)$, and thus $\cM \supset \Hinf(\Gm)$. It follows that $\cM = \Hal(\Gm)$ and $f$ is cyclic for $\Hal(\Gm)$.

Trivially \eqref{item:cycallal} implies \eqref{item:cycsomeal}, so 
we can complete the proof by showing that \eqref{item:cycsomeal} implies \eqref{item:cycHinf}.
Suppose $f$ is cyclic for some $\Hal(\Gm)$. Put $M = (\Hinf(\Gm)\cdot f)^{-\al}$ and $\cM = M^{-\al}$. Since $\cM$ is a closed invariant subspace of $\Hal(\Gm)$ that contains the cyclic vector $f$, $\cM = \Hal(\Gm)$. Proposition~\ref{prop:clwkstrtostr} implies $M = \cM \cap \Linf(\Gm,\om) = \Hinf(\Gm)$, so $f$ is cyclic for $\Hinf(\Gm)$.
\end{proof}

The cyclicity condition for bounded functions of the preceding theorem can now be expanded to hold for all functions in $\Hal(\Gm)$ for arbitrary $\al$.

\begin{corns} \label{cor:cycouter}
A function $f \in \Hal(\Gm)$ is cyclic if and only if $f$ is outer.
\end{corns}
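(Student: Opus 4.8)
The plan is to reduce the statement for a possibly unbounded $f \in \Hal(\Gm)$ to the bounded case already settled in Theorem~\ref{thm:bnddcyclicouter}. We may assume $f \neq 0$, since the zero function is neither outer nor cyclic (the subspace it generates is $\set{0} \neq \Hal(\Gm)$). The key device is to multiply $f$ by a single bounded outer function that simultaneously lands $f$ in $\Hinf(\Gm)$ and leaves the generated invariant subspace unchanged.

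First I would apply Lemma~\ref{lem:Saito} to $b = 1/(\abs{f}+1)$. Since $f \in \Lal(\Gm,\om)$ we have $\abs{f} \in \Lal(\Gm,\om)$, hence $1/b = \abs{f} + 1 \in \Lal(\Gm,\om)$ while $b \in \Linf(\Gm,\om)$, so the hypotheses are met. This produces a function $\ps$ of $\om$-a.e.\ constant modulus on each component of $\Gm$ and an outer $h \in \Hinf(\Gm)$ with $1/h \in \Hal(\Gm)$ and $b = \ps h$. Because $\abs{hf} = \abs{f}/\bigl((\abs{f}+1)\abs{\ps}\bigr) \leq 1/\abs{\ps}$ and $\abs{\ps}$ is a positive constant on each $\Gm_{j}$, the product $hf$ is bounded; being also in $\Hone(\Gm)$, it lies in $\Hinf(\Gm)$.

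Next I would show $(\Hinf(\Gm)\cdot f)^{-\al} = (\Hinf(\Gm)\cdot hf)^{-\al}$, so that $f$ is cyclic exactly when $hf$ is. The inclusion $\Hinf(\Gm)\cdot hf \subseteq \Hinf(\Gm)\cdot f$ is immediate. For the reverse, choose $h_{\nu} \in \Hinf(\Gm)$ with $\al(1/h - h_{\nu}) \ra 0$, which is possible since $1/h \in \Hal(\Gm)$ is the $\al$-closure of $\Hinf(\Gm)$; then for any $\ps_{0} \in \Hinf(\Gm)$,
\[
\al\bigl(\ps_{0} f - \ps_{0} h_{\nu}(hf)\bigr) = \al\bigl(\ps_{0}(1/h - h_{\nu})(hf)\bigr) \leq \supnorm{\ps_{0} hf}\,\al(1/h - h_{\nu}) \ra 0,
\]
so $\Hinf(\Gm)\cdot f \subseteq (\Hinf(\Gm)\cdot hf)^{-\al}$, and taking closures gives the asserted equality.

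Finally, since $h$ is outer so is $1/h$ (Royden's condition is additive in $\log\abs{\cdot}$, equivalently by Lemma~\ref{lem:outer}), and products and quotients of outer functions are outer; hence $hf$ is outer if and only if $f$ is outer. As $hf \in \Hinf(\Gm)$, Theorem~\ref{thm:bnddcyclicouter} gives that $hf$ is cyclic if and only if $hf$ is outer. Chaining the equivalences — $f$ cyclic iff $hf$ cyclic iff $hf$ outer iff $f$ outer — completes the argument. The main obstacle is precisely the lack of boundedness of $f$, which prevents a direct appeal to Theorem~\ref{thm:bnddcyclicouter}; everything hinges on extracting from Lemma~\ref{lem:Saito} a bounded outer multiplier $h$ whose reciprocal still lies in $\Hal(\Gm)$, so that passing to $hf$ neither enlarges nor shrinks the cyclic subspace and neither creates nor destroys outerness.
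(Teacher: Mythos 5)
Your proof is correct and follows essentially the same route as the paper's: apply Lemma~\ref{lem:Saito} to $b=1/(\abs{f}+1)$ to obtain a bounded outer multiplier $h$ with $1/h\in\Hal(\Gm)$, verify that $hf$ is bounded and generates the same cyclic subspace as $f$, and then invoke Theorem~\ref{thm:bnddcyclicouter} together with the fact that multiplying by the outer function $h$ preserves outerness. The only cosmetic difference is that you approximate $\ps_{0}f$ for every $\ps_{0}\in\Hinf(\Gm)$, whereas the paper only needs to show $f$ itself lies in the (already invariant, closed) subspace generated by $hf$.
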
 

\begin{proof}
Let $f$ be any vector in $\Hal(\Gm)$ for arbitrary $\al \in \fN$, and let $\cM$ be the cyclic subspace of $\Hal(\Gm)$ generated by $f$, i.e.\ $\cM = (\Hinf(\Gm)\cdot f)^{-\al}$. It is clear that if $h$ is any function in $\Hinf(\Gm)$, then the cyclic subspace $\cN$ generated by $hf$ is included in $\cM$. We will show that $h$ can be chosen to be outer so that the reverse inclusion holds.

Put $g = \frac{1}{\abs{f} + 1}$. Thus $g \in \Linf(\Gm,\om)$ and $\frac{1}{g} \in \Lal(\Gm,\om)$. Lemma~\ref{lem:Saito} implies the existence of a function $\ps$ with constant modulus on each connected component of $\Gm$ and an outer function $h$ in $\Hinf(\Gm)$ such that $\frac{1}{h} \in \Hal(\Gm)$ and $g = \ps h$. Then $hf \in \cM$ and there exist $h_{k} \in \Hinf(\Gm)$ such that $\al(\frac{1}{h} - h_{k}) \ra 0$ as $k \ra \infty$. Because $\abs{hf} = \frac{g}{\abs{\ps}} \abs{f} \leq \frac{1}{\abs{\ps}}$, $hf$ is bounded and hence $\al(f - h_{k}hf) = \al(hf(\frac{1}{h} - h_{k})) \ra 0$. It follows that the cyclic subspace generated by  $hf$ contains $f$ and therefore $\cM$ is included in $\cN$.

We have shown that the cyclic subspace generated by $f$ is also generated by the bounded function $hf$. Thus $f$ in $\Hal(\Gm)$ is a cyclic vector for $\Hinf(\Gm)$ if and only if the bounded function $hf$ is, and Theorem~\ref{thm:bnddcyclicouter} therefore implies $f$ is cyclic if and only if $hf$ is outer. But $h$ is outer; it follows that $hf$ is outer if and only if $f$ is, and hence $f$ is cyclic if and only if it  is  outer.
\end{proof}

\begin{corns}
Every function $f$ in $\Hone(\Gm)$ has a factorization $f = \ph g$ where $\ph$ is inner and $g$ is outer. The  factors are unique up to equivalence.
\end{corns}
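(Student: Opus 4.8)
The plan is to build the factorization from the cyclic invariant subspace generated by $f$ and then read off uniqueness from the inner-function uniqueness theorem already proved. First I would note that the $\Lone$-norm $\onenorm{\cdot}$ is itself a continuous, dominating, normalized gauge norm, so $\onenorm{\cdot} \in \fN$ and $\Hal(\Gm) = \Hone(\Gm)$ for this choice of $\al$; in particular Theorem~\ref{thm:BHLal} and Corollary~\ref{cor:cycouter} apply verbatim to $\Hone(\Gm)$. The case $f = 0$ is degenerate (take $\ph = 0$), so I would assume $f \neq 0$ throughout.

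For existence, let $\cM = (\Hinf(\Gm)\cdot f)^{-\onenorm{\cdot}}$ be the smallest closed $\Hinf(\Gm)$-invariant subspace of $\Hone(\Gm)$ containing $f$. Since $\cM \subset \Hone(\Gm)$, Theorem~\ref{thm:BHLal} places us in case (ii) and produces an inner function $\ph$ with $\cM = \ph\Hone(\Gm)$. As $f \in \cM$, we may write $f = \ph g$ with $g \in \Hone(\Gm)$, and the entire content of the existence statement reduces to showing that $g$ is outer.

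To establish this I would show that $g$ is cyclic and then invoke Corollary~\ref{cor:cycouter}. The key observation is that a nonzero inner function has modulus bounded away from $0$: since $\abs{\ph}$ is a nonzero constant on each of the finitely many components $\Gm_{j}$, there is $\delta > 0$ with $\abs{\ph} \geq \delta$ $\om$-a.e., so $M_{\ph}$ is a bounded operator on $\Hone(\Gm)$ that is also bounded below, hence a homeomorphism onto its closed range $\ph\Hone(\Gm)$. Consequently $M_{\ph}$ carries the cyclic subspace $\cN = (\Hinf(\Gm)\cdot g)^{-\onenorm{\cdot}}$ onto $(\Hinf(\Gm)\cdot \ph g)^{-\onenorm{\cdot}} = \cM = \ph\Hone(\Gm) = M_{\ph}\bigl(\Hone(\Gm)\bigr)$; injectivity of $M_{\ph}$ then forces $\cN = \Hone(\Gm)$, so $g$ is cyclic and therefore outer. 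I expect this bounded-below step, used to transport the cyclic subspace of $g$ across $M_{\ph}$, to be the one genuinely substantive point; everything around it is immediate from the cited theorems.

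For uniqueness, suppose $f = \ph_{1}g_{1} = \ph_{2}g_{2}$ with each $\ph_{i}$ inner and each $g_{i}$ outer. Since each $g_{i}$ is cyclic, the computation above shows that the cyclic subspace generated by $f$ equals both $\ph_{1}\Hone(\Gm)$ and $\ph_{2}\Hone(\Gm)$, so $\ph_{1}\Hone(\Gm) = \ph_{2}\Hone(\Gm)$. By Theorem~\ref{thm:uniqueness} (equivalence of (i) and (ii)) the inner functions $\ph_{1}$ and $\ph_{2}$ are equivalent, say $\ph_{2} = \rho\ph_{1}$ with $\rho$ invertible inner; cancelling the injective factor $\ph_{1}$ in $\ph_{1}g_{1} = \rho\ph_{1}g_{2}$ yields $g_{1} = \rho g_{2}$, so the outer factors differ by the same invertible inner function and the factorization is unique up to equivalence.
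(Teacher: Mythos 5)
Your proof is correct and takes essentially the same route as the paper's: both obtain $\ph$ from the Beurling theorem applied to the cyclic subspace generated by $f$, use the fact that a nonzero inner function is bounded below to transfer cyclicity from $f$ to $g$, and then invoke Corollary~\ref{cor:cycouter}. The only (immaterial) divergence is in the uniqueness step, where the paper argues directly that $\ps/\ph = g/h$ is simultaneously inner and outer and hence invertible by Proposition~\ref{prop:invinner}, while you pass through the equality of the invariant subspaces and Theorem~\ref{thm:uniqueness}.
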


\begin{proof}
If $f \in \Hone(\Gm)$, let $\cM$ be the cyclic subspace generated by $f$. This is a closed invariant subspace, and Theorem~\ref{thm:BHLal} implies $\cM = \ph \Hone(\Gm)$ for some inner function $\ph$. Then $f = \ph g$ for some $g \in \Hone(\Gm)$. If $h \in \Hone(\Gm)$, then $\ph h \in \cM$, and consequently there is a sequence of vectors $h_{k}$ in $\Hinf(\Gm)$ such that $\onenorm{h_{k}f - \ph h} \ra 0$ as $k \ra \infty$, and this implies $\onenorm{h_{k}g - h} \ra 0$. Thus $g$ is a cyclic vector for $\Hone(\Gm)$, and, by Corollary~\ref{cor:cycouter}, $g$ is outer.

If $f = \ps h$ is a second inner-outer factorization of $f$, then the relation $\frac{g}{h} = \frac{\ps}{\ph}$ shows that $\frac{g}{h}$ is an outer function having boundary values that have constant absolute values on each connected component of $\Gm$. Thus $\ps = \frac{g}{h} \ph$, and $\frac{g}{h}$ is an invertible inner function by Proposition~\ref{prop:invinner}, which makes $\ph$ and $\ps$ equivalent.
\end{proof}

Suppose $0 \ne f \in \Hone(\Om)$. We define the \emph{zero set for} $f$ to be 
\begin{equation*}
\cZ(f) = \set{w \in \Om: f(w) = 0}.
\end{equation*}
There is also a multiplicity function $m_{f}:\cZ(f) \ra \N$ defined by $m_{f}(a)$ is the order of $a$ as a zero of $f$. Suppose $\K$ is a subset of $\Om$ without limit points in $\Om$ and $\nu:\K \ra \N$. We can define a closed $\Hinf(\Gm)$-invariant subspace $M(\K,\nu)$ to be the set of all $f \in \Hone(\Om)$ such that $\K \subset \cZ(f)$, and for every $a \in \K$, 
\begin{equation*}
m_{f}(a) \geq \nu(a).
\end{equation*}

\begin{propns}
Suppose $\K \subset \Om$ has no limit points in $\Om$ and $\nu:\K \ra \N$. There are only two possibilities:
\romannumbering
\begin{enumerate}
\item 
$M(\K,\nu) = \set{0}$, which means that $(\K,\nu)$ is determining for $\Hone(\Om)$, i.e.\ if $f,g \in \Hone(\Om)$ and $f \mid_{\K} = g\mid_{\K}$ and $m_{f-g} \geq \nu$ on $\K$, then $f = g$. If $\nu$ is the constant 1 on $\K$, this means that $\K$ is determining for $\Hone(\Om)$, i.e.\ $f = g$ on $\K$ implies $f = g$.
\item 
there is an inner function $\ph$ such that 
\begin{enumerate}
\item 
$M(\K,\nu) = \ph \Hone(\Om)$,
\item 
$\cZ(\ph) = \K$, and 
\item 
$m_{\ph} = \nu$.
\end{enumerate}
\end{enumerate}
\end{propns}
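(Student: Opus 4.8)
The plan is to identify $M(\K,\nu)$ as a closed $\Hinf(\Om)$-invariant subspace of $\Hone(\Om)$ and then apply Theorem~\ref{thm:BHLal} with $\al = \onenorm{\cdot}$, which is a continuous, dominating, normalized gauge norm and so lies in $\fN$. Invariance is immediate: if $f \in M(\K,\nu)$ and $\ps \in \Hinf(\Om)$, then $\ps f$ is analytic and $m_{\ps f}(a) = m_{\ps}(a) + m_f(a) \ge \nu(a)$ for every $a \in \K$, so $\ps f \in M(\K,\nu)$. For closedness, write $M(\K,\nu) = \bigcap_{a \in \K} \bigcap_{0 \le j < \nu(a)} \ker \lb_{a,j}$ with $\lb_{a,j}(f) = f^{(j)}(a)$; since point evaluations on $\Hone(\Om)$ are continuous (because $\om_w$ is boundedly absolutely continuous with respect to $\om$) and $\Hone(\Om)$-norm convergence forces locally uniform convergence together with all derivatives, each $\lb_{a,j}$ is continuous and $M(\K,\nu)$ is an intersection of closed subspaces. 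Theorem~\ref{thm:BHLal}, in the case $\cM \subset \Hone(\Gm)$, then shows that $M(\K,\nu)$ is either $\set{0}$ or $\ph\Hone(\Om)$ for a nonzero inner function $\ph$.

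If $M(\K,\nu) = \set{0}$ we are in alternative (i): whenever $f,g \in \Hone(\Om)$ satisfy $f\mid_{\K} = g\mid_{\K}$ and $m_{f-g} \ge \nu$ on $\K$, the difference $f - g$ lies in $M(\K,\nu) = \set{0}$, so $f = g$; taking $\nu \equiv 1$ gives the determining statement for $\K$ itself.

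In the remaining case $M(\K,\nu) = \ph\Hone(\Om) \ne \set{0}$, applying the factor $1 \in \Hone(\Om)$ shows $\ph \in M(\K,\nu)$, whence $\K \subseteq \cZ(\ph)$ and $m_{\ph} \ge \nu$ on $\K$. The reverse containments are the \emph{crux}, and for them I would construct an inner function $B$ on $\Om$ with $\cZ(B) = \K$ and $m_B = \nu$ exactly. Fix any nonzero $f_0 \in M(\K,\nu)$ and lift it to $F_0 = f_0 \of \tau \in \Hone(\D)$; by local biholomorphy of $\tau$ the zero set of $F_0$ contains $\tau\inv(\K)$ with multiplicity at least $\nu$ at each orbit, so $\tau\inv(\K)$ obeys the Blaschke condition and the Blaschke product $b$ over $\tau\inv(\K)$, carrying multiplicity $\nu(a)$ at each point of $\tau\inv(a)$, is inner on $\D$ with $\cZ(b) = \tau\inv(\K)$. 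Since $\tau\inv(\K)$ is $\fG$-invariant and $\nu$ is constant on $\fG$-orbits, for each $\sig \in \fG$ the function $b\of\sig$ is a Blaschke product with the same zeros as $b$, so $b\of\sig = \eta(\sig)\,b$ for a character $\eta$ of $\fG$. Lemma~\ref{lem:eigenfneta} then furnishes an invertible outer $F_{\eta} \in \Hinf(\T)$ with $F_{\eta}\of\sig = \eta(\sig) F_{\eta}$ and $\abs{F_{\eta}}$ constant on each $\tau\inv(\Gm_k)$; consequently $b/F_{\eta}$ is $\fG$-invariant and, by Lemma~\ref{lem:Ginv}, equals $B\of\tau$ for some function $B$ on $\Om$. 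Because $F_{\eta}$ is invertible and outer, $B \in \Hinf(\Om)$, $\abs{B}$ is constant on each $\Gm_k$, and $\cZ(B) = \K$ with $m_B = \nu$; thus $B$ is a Royden inner function with exactly the prescribed zeros.

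Finally I would conclude by divisibility: $B \in M(\K,\nu) = \ph\Hone(\Om)$ forces $\ph \mid B$, hence $\cZ(\ph) \subseteq \cZ(B) = \K$ and $m_{\ph} \le m_B = \nu$. Together with the inequalities $\K \subseteq \cZ(\ph)$ and $m_{\ph} \ge \nu$ already recorded, this gives $\cZ(\ph) = \K$ and $m_{\ph} = \nu$, which is (ii)(b) and (ii)(c), while (ii)(a) is the identity $M(\K,\nu) = \ph\Hone(\Om)$ itself. The main obstacle is precisely the construction of $B$: one must check that the Blaschke product built downstairs descends to a genuine Royden inner function on $\Om$ with no spurious zeros, and the key device is the eigenfunction of Lemma~\ref{lem:eigenfneta}, which absorbs the character $\eta$ that obstructs $\fG$-invariance while leaving the zero set of $b$ untouched.
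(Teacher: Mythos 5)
Your proof is correct, but the crux --- showing $\cZ(\ph) = \K$ and $m_{\ph} = \nu$ exactly --- is handled quite differently from the paper. The paper's argument is a short division trick: if $a \in \K$ with $\nu(a) < m_{\ph}(a)$ (or if $a \in \Om\setminus\K$ with $\ph(a) = 0$), then $\ph/(z-a)$ is still bounded and still vanishes on $\K$ to order at least $\nu$, hence lies in $M(\K,\nu) = \ph\Hone(\Om)$; cancelling $\ph$ would force $1/(z-a) \in \Hone(\Om)$, impossible for $a \in \Om$. No extremal function is ever constructed. Your route instead builds a Royden inner function $B$ with $\cZ(B) = \K$ and $m_{B} = \nu$ by lifting to $\D$, forming the Blaschke product over $\tau\inv(\K)$, and correcting by the eigenfunction of Lemma~\ref{lem:eigenfneta}; this is essentially the classical Voichick--Royden construction of the Blaschke factor, and it is sound, though it leans on the nontrivial (if standard) fact that a Blaschke product composed with a disk automorphism is again a Blaschke product up to a unimodular constant, so that the obstruction to $\fG$-invariance is precisely a character --- a quotient of two inner functions with identical zeros need not be constant in general, so the absence of a singular factor must be invoked. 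What your heavier approach buys is more than the proposition asks: it establishes the existence of an inner function with prescribed zero data whenever the lifted Blaschke condition holds, which is in effect the content of the paper's subsequent corollary on the Blaschke/singular factorization, whereas the paper's proof extracts the exact zero data of $\ph$ from the invariant-subspace identity alone.
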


\begin{proof}
Suppose (i) is false and $M(\K,\nu) \ne \set{0}$. It follows from Theorem~\ref{thm:BHLal} that there is an inner function $\ph$ such that $M(\K,\nu) = \ph \Hone(\Om)$. Since $\ph = \ph \cdot 1 \in M(\K,\nu)$, we know that $\K \subset \cZ(\ph)$ and $\nu \leq m_{\ph}$ on $\K$. Assume, via contradiction, that $a \in \K$ and $\nu(a) < m_{\ph}(a)$. Then $\frac{\ph}{z - a} \in \Hone(\Om)$, so $\frac{\ph}{z - a} \in M(\K,\nu) = \ph \Hone(\Om)$. This implies $\frac{1}{z - a} \in \Hone(\Om)$, an impossibility. Hence $\nu(a) = m_{\ph}(a)$ for every $a \in \K$. A similar argument shows that $\ph(a) \ne 0$ for every $a \in \Om\setminus \K$. Hence $\cZ(\ph) = \K$ and $m_{\ph} = \nu$.
\end{proof}

\begin{corns}
If $0 \ne f \in \Hone(\Om)$, then there is a unique (up to units) factorization of the inner part of $f$ into a product $\ph_{0}\ph_{1}$, where $\ph_{0}\Hone(\Om) = M(\cZ(f),m_{f})$ and $\cZ(\ph_{1}) = \emp$.
\end{corns}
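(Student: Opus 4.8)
The plan is to first extract the inner part of $f$ via the inner--outer factorization established above, and then to split off exactly the factor that accounts for the zeros. First I would write $f = \ph g$ with $\ph$ inner and $g$ outer, using the inner--outer factorization proved above. Since $g$ is outer, $g\of\tau$ is outer on \D\ by Lemma~\ref{lem:outer} and hence zero-free; because $\tau$ is onto, $g$ is zero-free on \Om. Consequently $\cZ(\ph) = \cZ(f)$ and $m_{\ph} = m_{f}$, so $\ph$ itself lies in $M(\cZ(f),m_{f})$.

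Next I would apply the preceding proposition with $\K = \cZ(f)$ and $\nu = m_{f}$, noting that $\cZ(f)$ has no limit points in \Om\ because $f \in \Hone(\Om) \subset \Nev(\Om)$ is not identically zero. Since $f \in M(\cZ(f),m_{f})$ and $f \ne 0$, alternative (i) of that proposition is excluded, so there is an inner function $\ph_{0}$ with $\ph_{0}\Hone(\Om) = M(\cZ(f),m_{f})$, $\cZ(\ph_{0}) = \cZ(f)$, and $m_{\ph_{0}} = m_{f}$. As $\ph \in M(\cZ(f),m_{f}) = \ph_{0}\Hone(\Om)$, we may write $\ph = \ph_{0}\ph_{1}$ with $\ph_{1} \in \Hone(\Om)$, and it remains to verify that $\ph_{1}$ is inner with $\cZ(\ph_{1}) = \emp$.

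The heart of the argument is this verification. For the zeros: at each $a \in \cZ(f)$ the order of vanishing of $\ph$ equals $m_{f}(a) = m_{\ph_{0}}(a)$, forcing $\ph_{1}(a)\ne 0$, while at every other point of \Om\ neither $\ph$ nor $\ph_{0}$ vanishes; hence $\cZ(\ph_{1}) = \emp$. For innerness I would observe that on \Gm\ the modulus $\abs{\ph}$ is bounded above and $\abs{\ph_{0}}$ is bounded below, each being a nonzero constant on every component $\Gm_{j}$, so the boundary function of $\ph_{1} = \ph/\ph_{0}$ lies in $\Linf(\Gm,\om)$ and has constant modulus on each $\Gm_{j}$. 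The step I expect to be the main obstacle is promoting $\ph_{1}$ from $\Hone(\Om)$ to $\Hinf(\Om)$: this is handled by the Poisson-type representation $\ph_{1}(w) = \int_{\Gm}\ph_{1}\domega_{w}$, valid for members of $\Hone(\Om)$, which gives $\abs{\ph_{1}(w)} \le \supnorm{\ph_{1}}$ for all $w \in \Om$, whence $\ph_{1}\in\Hinf(\Om)$. Together with the constant-modulus boundary condition, $\ph_{1}$ is a Royden inner function, and $\ph = \ph_{0}\ph_{1}$ is the asserted factorization.

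Finally, for uniqueness up to units, suppose $\ph = \ph_{0}\ph_{1} = \ps_{0}\ps_{1}$ with both factorizations meeting the stated conditions. Then $\ph_{0}\Hone(\Om) = M(\cZ(f),m_{f}) = \ps_{0}\Hone(\Om)$, so $\ph_{0}$ and $\ps_{0}$ are equivalent by Theorem~\ref{thm:uniqueness}, say $\ps_{0} = u\ph_{0}$ with $u$ invertible inner. Cancelling the nonzero factor $\ph_{0}$ in $\ph_{0}\ph_{1} = u\ph_{0}\ps_{1}$, which is legitimate since a nonzero member of $\Nev(\Gm)$ cannot vanish on a set of positive measure, yields $\ph_{1} = u\ps_{1}$, so $\ph_{1}$ and $\ps_{1}$ are equivalent as well. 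This gives uniqueness modulo invertible inner factors and completes the proof.
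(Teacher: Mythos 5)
Your proof is correct and follows essentially the same route as the paper: factor $f=\ph g$ with $g$ outer (hence zero-free), apply the preceding proposition to obtain $\ph_{0}$ with $\ph_{0}\Hone(\Om)=M(\cZ(f),m_{f})$, and then divide. The only difference is that you verify directly that the quotient $\ph_{1}=\ph/\ph_{0}$ is inner and zero-free, a step the paper absorbs into its (stated but unproved) divisibility proposition that $\ph\Hone(\Om)\subset\ph_{0}\Hone(\Om)$ is equivalent to $\ph_{0}\mid\ph$; your verification via the boundary moduli and the Poisson-type representation is sound.
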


\begin{proof}
If $f = \ph h$ is the inner-outer factorization of $f$, $\cZ(h) = \emp$, so $\cZ(f) = \cZ(\ph)$ and $m_{f}= m_{\ph}$. If $\ph_{0}$ is the unique (up to equivalence) inner function for which 
\begin{equation*}
\ph_{0}\Hone(\Om) = M(\cZ(f),m_{f}) = M(\cZ(\ph),m_{\ph}),
\end{equation*}
we see that there is a unique (up to equivalence) inner function $\ph_{1}$
such that $\ph = \ph_{0} \ph_{1}$.
\end{proof}

We call $\ph_{0}$ in the preceding corollary the \emph{Blaschke factor} of
$\ph$ (or $f$) and $\ph_{1}$ the \emph{singular factor} of $\ph$.
It is clear that $I^{-1}(  \Om )  $ is precisely the set of
functions that are both inner and outer.

Suppose $u,v$ are inner, then $w= \GCD(u,v)$ is the unique, up to a unit
factor, inner function such that%
\begin{equation*}
\left[  uH^{1}\left(  \Omega \right)  +vH^{1}\left(  \Omega \right)  \right]
^{-\left \Vert {}\right \Vert _{1}}=wH^{1}\left(  \Omega \right)  .
\end{equation*}
We write
\begin{equation*}
w= \GCD\left(  u,v\right)  .
\end{equation*}

\begin{lemns}
If $\ph$, $\rho$ and $\psi$ are inner functions and $\GCD(
\ph,\rho )  =1$ and $\ph\mid \rho \psi$, then $\ph \mid \psi$.
\end{lemns}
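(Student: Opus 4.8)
The plan is to convert every divisibility relation into an inclusion of the associated closed $\Hinf(\Gm)$-invariant subspaces of $\Hone(\Om)$ and to exploit the fact that multiplication by an inner function commutes with the operation of taking the closed linear span in $\Hone(\Om)$. Recall that by the divisibility criterion $\gm \mid \delta$ is equivalent to $\delta\Hone(\Om)\subset\gm\Hone(\Om)$, that $\GCD(\ph,\rho)=1$ means $\overline{\ph\Hone(\Om)+\rho\Hone(\Om)}=\Hone(\Om)$ (closure in the $\onenorm{\cdot}$ norm), and that the hypothesis $\ph\mid\rho\psi$ says precisely $\rho\psi\Hone(\Om)\subset\ph\Hone(\Om)$. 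The goal $\ph\mid\psi$ is then exactly the inclusion $\psi\Hone(\Om)\subset\ph\Hone(\Om)$.

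First I would record that for any nonzero inner function $\psi$ the multiplication operator $M_\psi$ is bounded below on $\Hone(\Om)$: since $\abs{\psi}$ is $\om$-a.e.\ equal to a nonzero constant on each component of $\Gm$, there is a constant $c>0$ with $\abs{\psi}\geq c$ $\om$-a.e., whence $\onenorm{\psi f}\geq c\onenorm{f}$, while $\onenorm{\psi f}\leq\supnorm{\psi}\onenorm{f}$. Consequently $M_\psi$ is a topological isomorphism of $\Hone(\Om)$ onto its closed range $\psi\Hone(\Om)$, and therefore carries closed linear spans to closed linear spans. Applying $M_\psi$ to the identity $\overline{\ph\Hone(\Om)+\rho\Hone(\Om)}=\Hone(\Om)$ coming from $\GCD(\ph,\rho)=1$ gives
\[
\psi\Hone(\Om)=M_\psi\bigl(\overline{\ph\Hone(\Om)+\rho\Hone(\Om)}\bigr)=\overline{\ph\psi\Hone(\Om)+\rho\psi\Hone(\Om)}.
\]
Now $\rho\psi\Hone(\Om)\subset\ph\Hone(\Om)$ by the hypothesis $\ph\mid\rho\psi$, and $\ph\psi\Hone(\Om)=\ph\bigl(\psi\Hone(\Om)\bigr)\subset\ph\Hone(\Om)$ trivially; since $\ph$ is inner, $\ph\Hone(\Om)$ is itself $\onenorm{\cdot}$-closed (again because $M_\ph$ is bounded below). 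Taking closures yields $\psi\Hone(\Om)\subset\ph\Hone(\Om)$, that is, $\ph\mid\psi$.

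The one point that genuinely needs care — and which I expect to be the main obstacle — is the interchange of $M_\psi$ with the closure, i.e.\ the assertion $M_\psi\bigl(\overline{A}\bigr)=\overline{M_\psi(A)}$. This rests entirely on the fact that $M_\psi$ is bounded below and hence has closed range, so that closure in the subspace $\psi\Hone(\Om)$ coincides with closure in $\Hone(\Om)$; the boundedness below is in turn exactly the Royden constant-modulus property of inner functions on each boundary component. Everything else is a formal manipulation of the subspace dictionary established by the preceding proposition, so once this closed-range observation is in place the argument closes immediately. (Equivalently, one could first prove the multiplicativity $\GCD(a\psi,b\psi)\sim\GCD(a,b)\psi$ by the same homeomorphism argument and then note that $\ph$ divides both $\rho\psi$ and $\ph\psi$, hence divides $\GCD(\rho\psi,\ph\psi)=\psi$; but the direct route above avoids isolating that intermediate identity.)
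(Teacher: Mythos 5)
Your proof is correct and is essentially the paper's own argument recast in subspace language: the paper approximates $1$ in $\onenorm{\cdot}$ by $c_{n}\ph + d_{n}\rho$, multiplies through by $\psi$, and uses the factorization $\rho\psi = \ph\gm$ together with the closedness of $\ph\Hone(\Om)$ to conclude $\psi \in \ph\Hone(\Om)$, which is exactly the content of your applying $M_{\psi}$ to the identity $\overline{\ph\Hone(\Om)+\rho\Hone(\Om)} = \Hone(\Om)$ and then invoking $\rho\psi\Hone(\Om) \subset \ph\Hone(\Om)$. The bounded-below/closed-range observation you flag as the delicate point is the same fact the paper uses implicitly when it asserts that the limit lies in $\ph\Hone(\Om)$.
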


\begin{proof}
Choose a Royden-inner function $\gm$ such that $\rho \psi=\ph \gm.$
Since $\GCD(  \ph,\rho )  =1$, we can choose sequences $(c_{n}), (  d_{n} )  $ in $\Hone(  \Om)  $
such that $\onenorm {c_{n}\ph + d_{n}\rho - 1} \ra 0.$
Then $\onenorm{ \psi (  c_{n} \ph + d_{n} \rho - 1)  } \ra 0.$  This means $\psi = \lim_{n \ra \infty} \ph (c_{n}\psi + d_{n}\gm )  \in \ph \Hone(  \Om)  .$ Hence $\ph \mid \psi.$
\end{proof}

\section{Multiplier Pairs}

In this section we will show that the multipliers of $\Hal(\Om)$ are the functions in $\Hinf(\Om)$, and that $(\Hal(\Om),H(\Om))$ is a multiplier pair in the sense of \cite{HN}.

\begin{thmns}
Suppose $\al \in \fN$, $\ps \in \Hal(\Om)$, and $\ps \Hal(\Om) \subset \Hal(\Om)$.
Then $\ps \in \Hinf(\Om)$.
\end{thmns}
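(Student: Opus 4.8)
The plan is to show that the hypothesis $\ps\Hal(\Om)\subset\Hal(\Om)$, combined with $\ps\in\Hal(\Om)$, forces $\ps$ to be bounded. Since $1\in\Hinf(\Om)\subset\Hal(\Om)$ (using that $\al$ is normalized and dominating, so constants lie in every $\Hal$), we immediately get $\ps=\ps\cdot 1\in\Hal(\Om)$ as given, and the real content is the boundedness. First I would transfer the whole problem to the disk via the composition operator $C_\tau$, which maps $\Hal(\Gm)$ isometrically into $\Hal(\T)$ and respects products. Under this transfer, $\ps$ corresponds to $\ps_1=\ps\of\tau\in\Hal(\T)$, and the multiplier condition becomes $\ps_1\cdot C_\tau(\Hal(\Gm))\subset C_\tau(\Hal(\Gm))$. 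The image $C_\tau(\Hal(\Gm))$ consists precisely of the $\fG$-invariant functions in $\Hal(\T)$ (by Lemma~\ref{lem:Ginv} and the discussion of the various views of $\Hp(\Om)$), so the problem reduces to understanding bounded-below multipliers on this invariant subalgebra of $\Hal(\T)$.

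\textbf{The key mechanism.}

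The central idea I would exploit is that multiplication by a fixed function of \emph{unbounded} modulus cannot preserve $\Hal$. Concretely, suppose toward a contradiction that $\ps\notin\Linf$. Pick any outer function $h\in\Hinf(\Gm)$ with $1/h\in\Hal(\Gm)$ supplied by Lemma~\ref{lem:Saito} applied to $b=1/(\abs{\ps}+1)$; this produces $h$ outer with $h\ps$ bounded, since $\abs{h\ps}=\abs{\ps}/((\abs{\ps}+1)\abs{\psi_0})$ where $\psi_0$ has constant modulus on each $\Gm_j$. Now $1/h\in\Hal(\Gm)$, so by hypothesis $\ps\cdot(1/h)\in\Hal(\Gm)$, i.e.\ $\ps/h\in\Hal(\Gm)$. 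The plan is to iterate: because $\ps$ is a multiplier and $1/h$ lies in $\Hal(\Gm)$, every power $\ps^k/h\in\Hal(\Gm)$ as well, and more generally $\ps^k\cdot g\in\Hal(\Gm)$ for any $g\in\Hal(\Gm)$ by induction on $k$. Transferring to the disk and using that $C_\tau(\Hal(\Gm))$ is closed under the bounded multiplier $h\of\tau$, I would set up a comparison showing that $\al(\ps^k/h)$ cannot grow, which is incompatible with $\ps$ being genuinely unbounded on a set of positive measure.

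\textbf{The main obstacle and how I would handle it.}

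The hard part will be converting "$\ps$ is a multiplier" into a quantitative norm bound, since $\al$ is only a gauge norm, not an $L^p$ norm, so I cannot directly integrate powers. The cleanest route I expect to work is a closed-graph / Banach-space argument: the multiplier condition makes $M_\ps:\Hal(\Om)\to\Hal(\Om)$ a well-defined linear map, and one checks it has closed graph using that point evaluations are continuous on $\Hal(\Om)$ (established in the gauge-norm subsection, via bounded absolute continuity of $\om_w$ with respect to $\om$ together with the dominating property). Hence $M_\ps$ is bounded, say $\al(\ps g)\le\norm{M_\ps}\,\al(g)$ for all $g\in\Hal(\Om)$. Evaluating at interior points then gives $\abs{\ps(w)}\,\abs{g(w)}\le (\text{const}_w)\,\norm{M_\ps}\,\al(g)$; choosing $g$ to be a sequence of outer functions normalized in $\al$ but with $\abs{g(w)}$ controlled below (again via Lemma~\ref{lem:Saito}-type constructions) pins $\abs{\ps(w)}$ down by a bound independent of $w$ on compacta. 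Pushing this bound to the boundary through the boundary-value correspondence $\Hal(\Om)\leftrightarrow\Hal(\Gm)$ yields $\ps\in\Linf(\Gm,\om)$, and since $\ps$ is already analytic, $\ps\in\Hinf(\Om)$. The delicate point to get right is ensuring the constant in the pointwise bound is uniform, which is exactly where the boundedness of $M_\ps$ (rather than mere well-definedness) is essential.
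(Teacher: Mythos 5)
Your third paragraph contains the right skeleton, and it is essentially the paper's own argument: $\Hal(\Om)$ is a functional Banach space (point evaluations are continuous linear functionals), so the closed graph theorem makes $M_{\ps}$ bounded and a pointwise bound on $\ps$ follows; the paper simply cites \cite{HN} for the fact that every multiplier of a functional Banach space is bounded on the underlying set. Your first two paragraphs, however, are detours that never enter the final argument: the transfer to the disk is not used, and the ``key mechanism'' paragraph is not a proof --- the claim that $\al(\ps^{k}/h)$ ``cannot grow'' is never substantiated (boundedness of $M_{\ps}$ only gives $\al(\ps^{k}/h)\le\norm{M_{\ps}}^{k}\al(1/h)$, which can certainly grow), so no contradiction is ever produced there.

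The genuine gap is in the last step, exactly at the point you yourself flag as delicate. Writing $e_{w}(f)=f(w)$, the bound $\al(\ps g)\le\norm{M_{\ps}}\al(g)$ gives $\abs{\ps(w)}\abs{g(w)}\le\norm{e_{w}}\norm{M_{\ps}}\al(g)$, and your plan of choosing outer test functions with $\abs{g(w)}$ bounded below only yields $\abs{\ps(w)}\le\norm{e_{w}}\norm{M_{\ps}}/\abs{g(w)}$, a bound that is uniform only on compacta because $\norm{e_{w}}$ may blow up as $w$ approaches $\Gm$. A locally uniform bound cannot be ``pushed to the boundary'': it is perfectly consistent with $\ps$ being unbounded on $\Om$. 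The missing idea is the standard eigenvector argument: $e_{w}(M_{\ps}g)=\ps(w)e_{w}(g)$ for all $g$, so $e_{w}$ is an eigenvector of the adjoint of $M_{\ps}$ with eigenvalue $\ps(w)$, and since $e_{w}\ne 0$ (because $e_{w}(1)=1$), we get $\abs{\ps(w)}\le\norm{M_{\ps}}$ for every $w\in\Om$ simultaneously. Equivalently, for each $w$ pick $g$ with $\al(g)\le 1$ and $\abs{g(w)}>\tfrac{1}{2}\norm{e_{w}}$, which is possible by the definition of $\norm{e_{w}}$; the two factors of $\norm{e_{w}}$ then cancel and $\abs{\ps(w)}<2\norm{M_{\ps}}$. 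With that uniform bound in hand, $\ps\in\Hinf(\Om)$ follows at once, and no Lemma~\ref{lem:Saito}-type construction is needed anywhere in the proof.
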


\begin{proof}
It was observed earlier that $\Hal(\Om)$ is a functional Banach space on $\Om$. It follows from \cite{HN} that every multiplier of $\Hal(\Om)$ is bounded on $\Om$, which means $\ps \in \Hinf(\Om)$.
\end{proof}

We let $H(\Om)$ denote the vector space of analytic
functions on $\Om$, and we give $H(\Om)$ the topology of
pointwise convergence. This makes $H(\Om)$ a Hausdorff
topological vector space. Pointwise multiplication is a bilinear map%
\begin{equation*}
\cdot:\Hal(\Om)  \times \Hal(\Om) \ra H(\Om)
\end{equation*}
that is jointly continuous, since the evaluation maps at points in $\Om$ are
continuous on $\Hal(\Om)$. The constant function $1$ is an identity and multiplication is associative on a triple whenever the factors are all in $\Hal(\Om)$. Moreover, the set $\Hinf(\Om)$ of multipliers is norm dense in $\Hal(\Om)$. It follows that $( \Hal(\Om), H(\Om))$ is a \emph{multiplier pair}
as defined in \cite{HN}. The following is an immediate consequence of Theorem 1 of that paper, where an algebra of operators is called reflexive if it contains every operator whose invariant subspaces include those of the algebra.

\begin{propns}
If $\al \in \fN$, then the algebra $\Hinf(\Om)$, acting as multiplications on $\Hal(\Om)$, is maximal abelian and reflexive.
\end{propns}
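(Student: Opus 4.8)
The plan is to recognize that all the analytic content has been placed before the statement, so that the proposition reduces to a direct appeal to Theorem~1 of \cite{HN}. Accordingly, the first step I would take is to confirm that the data consisting of $\Hal(\Om)$, the ambient space $H(\Om)$, the pointwise product, and the identity $1$ satisfy the axioms of a \emph{multiplier pair} in the sense of \cite{HN}. Concretely, I would check: (a) $\Hal(\Om)$ is a functional Banach space, which was established in Section~\ref{subsec:gauge} via the dominating property, so that point evaluations at each $w \in \Om$ are continuous; (b) the multiplication map $\Hal(\Om) \times \Hal(\Om) \ra H(\Om)$ is jointly continuous into the topology of pointwise convergence, which follows at once from the continuity of those evaluations; (c) the constant $1$ is a two-sided identity and the product is associative on triples lying in $\Hal(\Om)$; and (d) the multiplier algebra is norm dense, which holds because $\Hinf(\Om)$ is dense in $\Hal(\Om)$ by construction (the $\al$-closure of $\Hinf(\Gm)$ defines $\Hal(\Gm)$).

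The second step is to pin down the multiplier algebra explicitly. Every $\ps \in \Hinf(\Om)$ visibly maps $\Hal(\Om)$ into itself, and conversely the multiplier theorem immediately preceding this proposition shows that any $\ps$ with $\ps\Hal(\Om) \subset \Hal(\Om)$ already lies in $\Hinf(\Om)$. Hence the algebra of multiplication operators on $\Hal(\Om)$ is exactly $\set{M_{\ps} : \ps \in \Hinf(\Om)}$, and this is the multiplier algebra of the pair.

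With the multiplier pair structure verified, the conclusion follows directly from \cite[Theorem~1]{HN}: for any multiplier pair, the algebra of multiplication operators is maximal abelian (its commutant coincides with itself) and reflexive (every operator whose invariant subspaces include those of the algebra is itself a multiplication operator). I would simply quote this, noting that the invariant-subspace description driving the reflexivity half is precisely our Beurling-type result, Theorem~\ref{thm:BHLal}, transferred through the general machinery of \cite{HN}.

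The main obstacle, such as it is, lies not in any new analysis but in matching the present structure to the precise definition of a multiplier pair in \cite{HN}; the only point requiring a moment's care is the joint continuity of multiplication into the pointwise-convergence topology on $H(\Om)$, together with the density of the multipliers. Both have been recorded in the paragraph preceding the statement, so once the definitions are aligned there is no further work: the deep ingredient, the invariant subspace theorem, is already in hand, and \cite[Theorem~1]{HN} supplies the rest.
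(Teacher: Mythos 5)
Your proposal is correct and follows essentially the same route as the paper: the multiplier-pair axioms and the identification of the multiplier algebra with $\Hinf(\Om)$ are set up in the material immediately preceding the statement, and the proposition is then read off from Theorem~1 of \cite{HN}. One small correction of emphasis: the paper's proof attributes reflexivity not to the Beurling-type classification of Theorem~\ref{thm:BHLal} but to the more elementary fact that point evaluations are continuous linear functionals which are eigenvectors of the adjoints of all multiplication operators, so the invariant subspace theorem is not actually needed here.
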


\begin{proof}
Reflexivity follows because point evaluations are continuous linear functionals that are eigenvectors of the adjoints of all multiplication operators.
\end{proof}

Suppose that $\rho:\Hinf(\Om) \ra \Hinf(\Om) $ is a unital homomorphism and
$\ph = \rho( z)$. Since $z - \lb$ is invertible in
$\Hinf(\Om) $ whenever $\lb \in \C \setminus \Ombar$, we see that $\ph(\Om)  \subset \Ombar$. It follows from the open mapping theorem that either
$\ph(\Om)  \subset \Om$ or $\ph = \lb_{0}%
\in \Gm$. It follows that for every rational function $f$ with poles
off $\Om$ that
\begin{equation*}
\rho(f) = f\of\ph.
\end{equation*}
It is easily shown that if $\ph = \lb_{0} \in \Gm$, then $\rho$
does not extend from $\Hinf(\Om)$ to a bounded operator
on $\Hal(\Om)$. Thus the only
composition operators on $\Hal(\Om) $
in the multiplier-pair sense \cite{HN} have the form%
\begin{equation*}
C_{\ph}f = f\of\ph
\end{equation*}
for some analytic $\ph:\Om \ra \Om$. We do not know which
$\ph$'s give a bounded operator, but we denote this class by $\cF$.
A \emph{local composition operator} $T$ on $\Hal(\Om) $ is an operator such that, for every $f \in \Hal(\Om) $ there is a $\ph_{f} \in \cF$ such
that $Tf=f \of\ph_{f}$. A \emph{local multiplication operator} on
$\Hal(\Om) $ is an operator $S$ such
that, for every $f \in \Hal(\Om) $
there is a $g_{f} \in \Hinf(\Om)$ such that
$Sf = g_{f}\cdot f$. Here are immediate consequences of $(\Hal(\Om) , H(\Om))  $ being a multiplier pair with multipliers in $\Hinf(\Om)$ (see Theorems  2 and 4 in \cite{HN}).

\begin{propns}
Suppose $\al \in \fN$. Then
\romannumbering
\begin{enumerate}
\item 
every local composition operator on $\Hal(\Om)$ is a composition operator; and
\item 
every local multiplication operator on $\Hal(\Om)$ is multiplication by some member of $\Hinf(\Om)$.
\end{enumerate}
\end{propns}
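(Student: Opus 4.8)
The plan is to derive both statements directly from the abstract multiplier-pair machinery of \cite{HN}, since the essential verification—that $(\Hal(\Om), H(\Om))$ is a multiplier pair whose multiplier algebra is exactly $\Hinf(\Om)$—has already been carried out in the preceding discussion. First I would recall the setup: $H(\Om)$ carries the topology of pointwise convergence, making it a Hausdorff topological vector space; pointwise multiplication $\Hal(\Om) \times \Hal(\Om) \ra H(\Om)$ is jointly continuous because the evaluation maps at points of $\Om$ are continuous linear functionals on $\Hal(\Om)$; the constant $1$ is a multiplicative identity; and $\Hinf(\Om)$ is norm dense in $\Hal(\Om)$. These are precisely the axioms of a multiplier pair, and by the multiplier theorem established just above, the multipliers are exactly the members of $\Hinf(\Om)$.

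For part (i), I would invoke Theorem 2 of \cite{HN}, which asserts that in any multiplier pair every local composition operator is globally a composition operator. The only thing to check is that the notion of composition operator used there coincides with the class $\cF$ described just above: by the homomorphism analysis preceding this proposition, every unital homomorphism $\rho$ of $\Hinf(\Om)$ that extends to a bounded operator on $\Hal(\Om)$ has the form $\rho(f) = f \of \ph$ for a single analytic self-map $\ph:\Om \ra \Om$, so $\cF$ is exactly the set of admissible composition symbols in the sense of \cite{HN}. Granting this identification, Theorem 2 immediately yields that a local composition operator $T$, for which each $Tf = f \of \ph_{f}$, is in fact $Tf = f \of \ph$ for one fixed $\ph \in \cF$.

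For part (ii), I would apply Theorem 4 of \cite{HN}, which states that in a multiplier pair every local multiplication operator is a single multiplication by a multiplier. Since the multipliers here are precisely the functions in $\Hinf(\Om)$, a local multiplication operator $S$ with $Sf = g_{f} \cdot f$ must satisfy $Sf = g \cdot f$ for one fixed $g \in \Hinf(\Om)$, which is exactly the assertion.

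The main obstacle is not in the deduction itself, which is a clean citation, but in confirming that the two dictionary entries line up: that our composition symbols $\cF$ and our multiplier algebra $\Hinf(\Om)$ are exactly the objects to which the hypotheses of Theorems 2 and 4 of \cite{HN} apply. Both identifications have effectively been established in the material leading up to this proposition, so the remaining work is bookkeeping rather than new analysis, and the proof reduces to citing the two abstract theorems.
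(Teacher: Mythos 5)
Your proposal is correct and matches the paper's own treatment: the paper derives both parts as immediate consequences of $(\Hal(\Om), H(\Om))$ being a multiplier pair with multiplier algebra $\Hinf(\Om)$, citing Theorems 2 and 4 of \cite{HN} exactly as you do. The identifications you flag as ``bookkeeping'' (the class $\cF$ of composition symbols and the multiplier algebra) are indeed the content of the discussion the paper places immediately before the proposition, so nothing further is needed.
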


\section{Affiliated Operators}

Let $\ph$ be a quotient of functions in $\Hinf(\Gm)$: $\ph = \frac{\ps}{\eta}\frac{u}{v}$, where $\ps$ and $\eta$ are inner and have no nontrivial common inner divisor and $u$ and $v$ are outer in $\Hinf(\Gm)$. If $\cD$ consists of all $f \in \Hal(\Gm)$ such that $\ph f \in \Hal(\Gm)$, then let $M_{\ph}$ be the linear transformation from $\cD$ into $\Hal(\Gm)$ defined by $M_{\ph}f = \ph f$. Then $M_{\ph}$ is a closed operator on $\Hal(\Gm)$ with domain $\cD$, and $M_{\ph}$ commutes with multiplication by every function in $\Hinf(\Gm)$.

We will obtain a more useful form for the graph of $M_{\ph}$ than 
\begin{equation*}
\Graph(M_{\ph}) = \set{(f,\ph f) : f \in \cD}. 
\end{equation*}
The functions $u$ and $v$ have moduli with integrable logarithms, and thus $\log(\abs{u} + \abs{v})$ is also integrable. By taking the harmonic extension of $\log(\abs{u} + \abs{v})$ to $\Om$ and adding an appropriate harmonic unit to it as in Lemma~\ref{lem:Per}, we are able to construct a harmonic conjugate of the sum on $\Om$, and by exponentiating the resulting analytic function, we obtain an outer function $F$ in $\Hinf(\Gm)$ with a boundary function satisfying $(1/C) (\abs{u} + \abs{v}) \leq \abs{F} \leq (1/c) (\abs{u} + \abs{v})$ for constants $c, C > 0$. Thus if $a = u/F$ and $b = v/F$ we obtain a pair of outer functions in $\Hinf(\Gm)$ satisfying 
\begin{equation} \label{eqn:abineq}
c \leq \abs{a} + \abs{b} \leq C 
\end{equation}
and $\ph = \frac{\ps}{\eta}\frac{a}{b}$.

The mapping $\Ph: \Hal(\Gm) \ra \Hal(\Gm) \times \Hal(\Gm)$ defined by 
\begin{equation} \label{eqn:defPh}
\Ph(g) = (\eta b g, \ps a g) 
\end{equation}
has its range in $\Graph(M_{\ph})$, and we will show that it is bounded and invertible when $\Hal(\Gm) \times \Hal(\Gm)$ is given either of the equivalent norms $\al_{2}(f,g) = \al(f) + \al(g)$ or $\al_{2}'(f,g) = \al(\abs{f} + \abs{g})$. Equation \eqref{eqn:abineq} implies that $\Ph$ is a bounded operator relative to $\al_{2}'$ that is bounded from below in the sense that $\al_{2}'(\Ph(g)) \geq c_{1}\al(g)$ for some $c_{1} > 0$ and all $g$ in $\Hal(\Gm)$. This is because both $\ps$ and $\eta$ have absolute values that lie in some interval $[c',C']$ with $c'> 0$ and 
\begin{equation*}
\al_{2}'(\Ph(g)) = \al(\abs{\eta b g} + \abs{\ps a g}) = \al((\abs{\eta b} + \abs{\ps a})\abs{g}) = x\al(g),
\end{equation*}
where $x$ is a number in the interval $[cc',CC']$. Thus the range of $\Ph$ is a closed subset of $\Graph(M_{\ph})$.

To see that $\Graph(M_{\ph})$ is included in the range of $\Ph$, suppose $f \in \cD$, so we also have that $\ph f \in \Hal(\Gm)$. Then both $f$ and $\ph f$ belong to $\Hone(\Gm)$ and 
\begin{equation*}
\Bigl| \frac{f}{b} \Bigr| \leq \frac{1}{c}\Bigl( \frac{\abs{b}+ \abs{a}}{\abs{b}} \abs{f} \Bigr) \leq \frac{1}{c}\abs{f} + \frac{C'}{cc'} \abs{\ph f},
\end{equation*}
which implies that both $f/b \in \Hone(\Gm)$ and $f/b \in \Lal(\Gm,\om)$. By Lemma~\ref{lem:HoneLal}, $f/b \in \Hal(\Gm)$. Also, if $\ph f = h$, then $\ps a f = \eta a h$, which implies $\eta \mid \ps a f$, and, since $\ps$ and $\eta$ have no nontrivial common inner divisor and $a$ is outer, it follows that $\eta \mid f$. Thus $f/(\eta b) \in \Hal(\Gm)$, and consequently if $g = f/(\eta b)$, then, by the definition \eqref{eqn:defPh}, we have $\Ph(g) = (f,\ph f)$.

We may summarize the above as follows.

\begin{propns} \label{prop:graphMph}
If $\ph$ is a quotient of functions in $\Hinf(\Gm)$, then there exist inner functions $\ps$ and $\eta$ with no nontrivial common inner divisors, and there exist outer functions $a$ and $b$ in $\Hinf(\Gm)$ such that $\ph = \frac{\ps}{\eta}\frac{a}{b}$ and such that the mapping $\Ph$ of equation~\eqref{eqn:defPh} is a boundedly invertible mapping of $\Hal(\Gm)$ onto $\Graph(M_{\ph}) = \set{(\eta b g, \ps a g): g \in \Hal(\Gm)}$.
\end{propns}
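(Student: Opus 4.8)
The plan is to assemble the pieces developed in the preceding discussion into a clean verification that the map $\Ph$ of \eqref{eqn:defPh} is a bounded, bounded-below bijection of $\Hal(\Gm)$ onto $\Graph(M_{\ph})$. First I would fix the factorization $\ph = \frac{\ps}{\eta}\frac{u}{v}$ with $\ps,\eta$ inner sharing no nontrivial common inner divisor and $u,v$ outer, and then replace $u,v$ by $a=u/F$ and $b=v/F$, where $F$ is the outer function built from the harmonic extension of $\log(\abs{u}+\abs{v})$ together with a harmonic unit supplied by Lemma~\ref{lem:Per}. The sole purpose of this normalization is the two-sided bound \eqref{eqn:abineq}, namely $c\le\abs{a}+\abs{b}\le C$, which is what makes every subsequent estimate uniform.

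With $a$ and $b$ in hand, I would check that $\Ph$ lands in the graph and is bounded and bounded below. For $g\in\Hal(\Gm)$ both $\eta b g$ and $\ps a g$ are products of $g$ with bounded functions and so lie in $\Hal(\Gm)$, while $\ph(\eta b g)=\frac{\ps}{\eta}\frac{a}{b}\,\eta b g=\ps a g$, so the pair $(\eta b g,\ps a g)$ sits in $\Graph(M_{\ph})$. Using \eqref{eqn:abineq} and the fact that $\abs{\ps}$ and $\abs{\eta}$ take values in a fixed interval $[c',C']$ with $c'>0$, the identity $\al_{2}'(\Ph(g))=\al\bigl((\abs{\eta b}+\abs{\ps a})\abs{g}\bigr)=x\,\al(g)$ with $x\in[cc',CC']$ shows $\Ph$ is bounded and bounded below relative to $\al_{2}'$; since $\al_{2}$ and $\al_{2}'$ are equivalent, the same holds for both, and in particular the range of $\Ph$ is closed.

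The heart of the argument, and where I expect the real work to be, is surjectivity onto the graph: given $f\in\cD$, so that $\ph f\in\Hal(\Gm)$ as well, I must produce $g\in\Hal(\Gm)$ with $\Ph(g)=(f,\ph f)$, and the only candidate is $g=f/(\eta b)$. Two facts are needed. First, $f/b\in\Hal(\Gm)$: the pointwise bound $\abs{f/b}\le\frac{1}{c}\abs{f}+\frac{C'}{cc'}\abs{\ph f}$ coming from \eqref{eqn:abineq} places $f/b$ in $\Lal(\Gm,\om)$, and since $b$ is outer the quotient $f/b$ stays of Smirnov type, so its integrable boundary modulus puts it in $\Hone(\Gm)$; Lemma~\ref{lem:HoneLal} then yields $f/b\in\Hal(\Gm)$. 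Second, $\eta\mid f$: writing $h=\ph f$ gives $\ps a f=\eta b h$, so $\eta$ divides $\ps a f$; as $a$ is outer it carries no inner factor, whence $\eta$ divides $\ps$ times the inner part of $f$, and since $\GCD(\ps,\eta)=1$ the divisibility lemma above forces $\eta\mid f$. Dividing $f/b$ by the inner function $\eta$ therefore keeps us inside $\Hal(\Gm)$, so $g=f/(\eta b)\in\Hal(\Gm)$ and $\Ph(g)=(f,\ph f)$.

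Combining the three steps, $\Ph$ is a bounded linear bijection of $\Hal(\Gm)$ onto $\Graph(M_{\ph})$ that is bounded below; since the graph of the closed operator $M_{\ph}$ is closed in $\Hal(\Gm)\times\Hal(\Gm)$, the open mapping theorem (or directly the lower bound) makes $\Ph$ boundedly invertible, which is the assertion. The main obstacle is precisely the surjectivity step: both the Smirnov-class reasoning that lands $f/b$ in $\Hone(\Gm)$ and the divisibility deduction $\eta\mid f$ depend essentially on the coprimality of $\ps$ and $\eta$ and on $a,b$ being genuinely \emph{outer} rather than merely bounded, so the careful normalization carried out in the first step is exactly what earns the conclusion here.
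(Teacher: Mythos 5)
Your proposal is correct and follows essentially the same route as the paper, whose proof of this proposition is precisely the discussion you reconstruct: the normalization $a=u/F$, $b=v/F$ giving \eqref{eqn:abineq}, the bounded-below estimate for $\al_{2}'(\Ph(g))$, and the surjectivity step via $f/b\in\Hal(\Gm)$ (Lemma~\ref{lem:HoneLal}) and $\eta\mid f$ from coprimality. Your explicit appeal to the Smirnov-class argument for $f/b\in\Hone(\Gm)$ and to the divisibility lemma is a slightly more careful rendering of steps the paper leaves implicit, but the argument is the same.
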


Suppose that $T$ is a closed operator defined on a subspace $\cD(T)$ of $\Hal(\Gm)$ and into $\Hal(\Gm)$ such that $T$ commutes with every multiplication operator $M_{h}$ with $h \in \Hinf(\Gm)$. By \cite{HLN}, there exists a quotient $\ph$ of $\Hinf(\Gm)$ functions such that $Tf = \ph f$ for all $f \in \cD(T)$. The graph of $T$ is a closed subspace of $\Graph(M_{\ph})$ that is the image under $\Ph$ of a closed subspace $\cM$ of $\Hal(\Gm)$. The commuting of $T$ with all multiplications by members of $\Hinf(\Gm)$ makes $\cM$ invariant under multiplication by all members of $\Hinf(\Gm)$, and hence, by Theorem~\ref{thm:BHLal}, $\cM = \xi \Hal(\Gm)$ for some inner function $\xi$. It follows that $\Graph(T) = \xi \Graph(M_{\ph})$. 

\begin{thmns}
If $T$ is a closed operator defined on a subspace $\cD(T)$ of $\Hal(\Gm)$ and $T$ commutes with all multiplications by members of $\Hinf(\Gm)$, then there exists $\ph = \frac{\ps}{\eta}\frac{a}{b}$ where $\ps$ and $\eta$ are Royden inner functions without a nontrivial common inner divisor and $a$ and $b$ are outer in $\Hinf$, and there exists a Royden inner function $\xi$ such that $\cD(T) = \xi\cD(M_{\ph})$ and $T = M_{\ph}\mid \cD(T)$. If, in addition, $T$ is densely defined, then $\eta = 1$ and $\cD(T) = b\Hal(\Gm)$.
\end{thmns}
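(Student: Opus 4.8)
The plan is to build on the structural result already established in the paragraph preceding the theorem, namely that $\Graph(T) = \xi\,\Graph(M_{\ph})$ for some Royden inner function $\xi$, where $\ph = \frac{\ps}{\eta}\frac{a}{b}$ comes from Proposition~\ref{prop:graphMph}. First I would unwind what $\Graph(T) = \xi\,\Graph(M_{\ph})$ says concretely. By Proposition~\ref{prop:graphMph}, a typical element of $\Graph(M_{\ph})$ is $(\eta b g, \ps a g)$ with $g \in \Hal(\Gm)$, so a typical element of $\xi\,\Graph(M_{\ph})$ is $(\xi\eta b g, \xi\ps a g)$. Reading off the first coordinate, this means $\cD(T) = \xi\eta b\,\Hal(\Gm)$, and on this domain $T$ sends $\xi\eta b g \mapsto \xi\ps a g = \ph\,(\xi\eta b g)$, so indeed $T = M_{\ph}\mid\cD(T)$. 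To match the stated form $\cD(T)=\xi\,\cD(M_{\ph})$, I would observe that $\cD(M_{\ph}) = \eta b\,\Hal(\Gm)$ (the first coordinate of $\Graph(M_{\ph})$ itself), so that $\xi\,\cD(M_{\ph}) = \xi\eta b\,\Hal(\Gm) = \cD(T)$. This handles the first assertion essentially by bookkeeping, modulo absorbing $\xi\eta$ into a single inner function if one wishes the factorization $\ph$ to be re-normalized; I would phrase it so that $\xi$ is exactly the inner function supplied by Theorem~\ref{thm:BHLal}.

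The substantive part is the dense case. Here I would argue that if $\cD(T)$ is dense in $\Hal(\Gm)$, then the inner factors $\xi$ and $\eta$ must be trivial. Since $\cD(T) = \xi\eta b\,\Hal(\Gm)$ and $b$ is outer (hence cyclic by Corollary~\ref{cor:cycouter}, so $b\,\Hal(\Gm)$ is dense), the closure of $\cD(T)$ is the closed invariant subspace generated by $\xi\eta b$, which by Theorem~\ref{thm:BHLal} is $(\xi\eta)\,\Hal(\Gm)$ after absorbing the outer factor $b$. Density of $\cD(T)$ therefore forces $(\xi\eta)\,\Hal(\Gm) = \Hal(\Gm)$, i.e.\ the inner function $\xi\eta$ is cyclic, hence outer by Corollary~\ref{cor:cycouter}. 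But an inner function that is also outer is invertible by Proposition~\ref{prop:invinner}. The key additional point is that the \emph{normalized} inner function $\xi\eta$ being invertible inner forces it to be trivial relative to the normalization: since $\eta$ and $\ps$ were chosen with no nontrivial common inner divisor and $\xi\eta$ is invertible, I can replace $\xi$ and $\eta$ by equivalent representatives making $\eta = 1$; consequently $\xi = 1$ as well (up to a unit, which can be absorbed). With $\eta = 1$ the domain becomes $\cD(T) = b\,\Hal(\Gm)$, which is the desired conclusion.

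The step I expect to be the main obstacle is the passage from ``$\xi\eta$ is invertible inner'' to ``$\eta = 1$ and $\cD(T) = b\,\Hal(\Gm)$'' done cleanly. The subtlety is that invertible inner functions form a substantial group (as emphasized in Section~\ref{sec:inout}), so $\xi\eta$ invertible does not by itself kill $\xi$ and $\eta$ individually — only their product up to a unit. I would resolve this by using the coprimality hypothesis $\GCD(\ps,\eta)=1$ together with Proposition~\ref{prop:invinner}: if $\xi\eta$ is invertible inner, then $\eta$ divides an invertible function, so $\eta$ itself is invertible inner; an invertible inner divisor common to $\eta$ can be absorbed, and coprimality with $\ps$ then lets me take $\eta = 1$ as the distinguished representative. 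The analytic content — that $\abs{a}+\abs{b}$ is bounded above and below by \eqref{eqn:abineq}, so that $b\,\Hal(\Gm)$ is genuinely a graph domain and not degenerate — is already secured by Proposition~\ref{prop:graphMph}, so no new estimate is needed; the remaining work is purely the inner-function algebra of divisibility, which Theorem~\ref{thm:uniqueness} and Proposition~\ref{prop:invinner} make routine once the coprimality is exploited.
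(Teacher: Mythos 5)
Your proposal is correct and follows essentially the same route as the paper: the first assertion is read off from the identity $\Graph(T)=\xi\,\Graph(M_{\ph})$ established in the discussion preceding the theorem, and in the densely defined case the density of $\cD(T)=\xi\eta b\,\Hal(\Gm)$ (with $b$ outer, hence cyclic) forces $\xi\eta$, and therefore each of $\xi$ and $\eta$, to be invertible inner, so both can be normalized away via Proposition~\ref{prop:invinner}. Your write-up in fact supplies more detail than the paper's terse justification of why density forces invertibility; the appeal to coprimality of $\ps$ and $\eta$ in your last step is harmless but unnecessary, since invertibility of $\eta$ alone lets it be absorbed into $\ps$.
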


\begin{proof}
Only the last assertion remains to be verified. If $T$ satisfies the additional requirement of having a dense domain, then it follows from Proposition~\ref{prop:graphMph} that $\eta$ must be invertible and can therefore be absorbed into $\ps$. Also, from the same proposition and the discussion preceding the statement of the theorem, it must be the case that $\xi$ is invertible, and therefore it is not necessary. Thus $\cD(T) = \cD(M_{\ph}) = b\Hal(\Gm)$.
\end{proof}

In the case of densely defined operators of the preceding theorem, the functions $\ph$ can be written in the form $\ph = \frac{\ps a}{b}$ where $\ps$ is inner and $a$ and $b$ are bounded outer functions. These functions constitute the Smirnov class of $\Om$.

\vspace{.25in}
\noindent {\Large S}CHOOL OF {\Large M}ATHEMATICS AND {\Large I}NFORMATION {\Large S}CIENCE, {\Large S}HAANXI {\Large N}ORMAL \\ {\Large U}NIVERSITY, {\Large X}I'AN, {\Large 710119, C}HINA.

\noindent \emph{E-mail address}: \texttt{yanni.chen@snnu.edu.cn}

\vspace{.125in}
\noindent {\Large D}EPARTMENT OF {\Large M}ATHEMATICS, {\Large U}NIVERSITY OF {\Large N}EW {\Large H}AMPSHIRE, {\Large D}URHAM, {\Large NH 03824, U.S.A}.

\noindent\emph{E-mail address}: \texttt{don@unh.edu}

\vspace{.125in}
\noindent {\Large D}EPARTMENT OF {\Large M}ATHEMATICS, {\Large U}NIVERSITY OF {\Large C}ENTRAL {\Large F}LORIDA, {\Large O}RLANDO, {\Large FL 32816, U.S.A.}

\noindent\emph{E-mail address}: \texttt{zhe.liu@ucf.edu}

\vspace{.125in}
\noindent {\Large D}EPARTMENT OF {\Large M}ATHEMATICS, {\Large U}NIVERSITY OF {\Large N}EW {\Large H}AMPSHIRE, {\Large D}URHAM, {\Large NH 03824, U.S.A}.

\noindent\emph{E-mail address}: \texttt{ean@unh.edu}

\end{document}